\newcommand{\bbk}{\mathbbm{k}}
\newskip\stdskip                      
\DeclareMathAlphabet\EuScript{U}{eus}{m}{n}
\SetMathAlphabet\EuScript{bold}{U}{eus}{b}{n}
\numberwithin{equation}{section}
\numberwithin{figure}{section}
\numberwithin{table}{section}
\newtheorem{thm}{Theorem}[section]
\newtheorem{lemma}[thm]{Lemma}
\newtheorem{prop}[thm]{Proposition}
\newtheorem{cor}[thm]{Corollary}
\newtheorem{corollary}[thm]{Corollary}
\theoremstyle{definition}
\newtheorem{definition}[thm]{Definition}
\newtheorem{example}[thm]{Example}
\newtheorem{rmk}[thm]{Remark}
\newtheorem{remark}[thm]{Remark}
\newtheorem{conj}[thm]{Conjecture}
\newcommand{\Z}{\mathbb{Z}}
\newcommand{\z}{\mathbf{z}}
\newcommand{\Gm}{\mathbb{G}_m}
\newcommand{\Funex}{\operatorname{Fun}^{\mathrm{ex}}}
\newcommand{\id}{\operatorname{id}}
\newcommand{\Hom}{\operatorname{Hom}}
\newcommand{\Ext}{\operatorname{Ext}}
\newcommand{\End}{\operatorname{End}}
\newcommand{\Res}{\operatorname{Res}}
\newcommand{\Jac}{\operatorname{Jac}}
\newcommand{\Aut}{\operatorname{Aut}}
\newcommand{\C}{{\mathbb C}}
\newcommand{\Fun}{\operatorname{Fun}}
\newcommand{\linspan}{\operatorname{span}}
\newcommand{\PGL}{\operatorname{PGL}}
\newcommand{\Cone}{\operatorname{Cone}}
\newcommand{\Tor}{\operatorname{Tor}}
\newcommand{\Sym}{\operatorname{Sym}}
\newcommand{\G}{{\mathbb G}}
\newcommand{\Spec}{\operatorname{Spec}}
\newcommand{\Proj}{\operatorname{Proj}}
\renewcommand{\P}{{\mathbb P}}
\newcommand{\Pic}{\operatorname{Pic}}
\renewcommand{\ker}{\operatorname{ker}}
\newcommand{\perf}{\operatorname{perf}}
\newcommand{\w}{{\bf w}}
\newcommand{\Qcoh}{\operatorname{Qcoh}}
\newcommand{\coh}{\operatorname{coh}}
\newcommand{\gr}{\operatorname{gr}}
\newcommand{\sing}{{\mathrm{sing}}}
\newcommand{\ev}{\operatorname{ev}}
\newcommand{\coev}{\ev^\dual}
\newcommand{\bGm}{\Gm}
\newcommand{\eu}{\mathrm{eu}}
\newcommand{\op}{\mathrm{op}}
\newcommand{\Mod}{\operatorname{Mod}}
\newcommand{\SH}{\operatorname{SH}}
\newcommand{\xto}{\xrightarrow}
\newcommand{\vspan}{\operatorname{span}}
\newcommand{\HH}{\operatorname{HH}}
\newcommand{\colim}{\operatorname{colim}}
\newcommand{\diag}{\operatorname{diag}}
\newdimen\argwidth
\def\db[#1\db]{%
 \setbox0=\hbox{$#1$}\argwidth=\wd0
 \setbox0=\hbox{$\left[\box0\right]$}
  \advance\argwidth by -\wd0
 \left[\kern.3\argwidth\box0 \kern.3\argwidth\right]}
\newcommand{\bP}{\ensuremath{\mathbb{P}}}
\newcommand{\bC}{\ensuremath{\mathbb{C}}}
\newcommand{\bZ}{\ensuremath{\mathbb{Z}}}
\newcommand{\bA}{\ensuremath{\mathbb{A}}}
\newcommand{\bG}{\ensuremath{\mathbb{G}}}
\newcommand{\bN}{\ensuremath{\mathbb{N}}}
\newcommand{\cD}{\ensuremath{\mathcal{D}}}
\newcommand{\cS}{\ensuremath{\mathcal{S}}}
\newcommand{\cP}{\ensuremath{\mathcal{P}}}
\newcommand{\cT}{\ensuremath{\mathcal{T}}}
\newcommand{\cW}{\ensuremath{\mathcal{W}}}
\newcommand{\cF}{\ensuremath{\mathcal{F}}}
\newcommand{\cA}{\ensuremath{\mathcal{A}}}
\newcommand{\frakm}{\ensuremath{\mathfrak{m}}}
\newcommand{\scrA}{\ensuremath{\mathscr{A}}}
\newcommand{\scrB}{\ensuremath{\mathscr{B}}}
\newcommand{\scrF}{\ensuremath{\mathscr{F}}}
\newcommand{\scrS}{\ensuremath{\mathscr{S}}}
\newcommand{\scrY}{\ensuremath{\mathscr{Y}}}
\newcommand{\scrW}{\ensuremath{\mathscr{W}}}
\newcommand{\bsw}{{\boldsymbol{w}}}
\newcommand{\bsR}{{\boldsymbol{R}}}
\newcommand{\bsT}{{\boldsymbol{T}}}
\newcommand{\bsRbar}{\overline{\bsR}}
\newcommand{\bsk}{{\boldsymbol{k}}}
\newcommand{\bszero}{{\boldsymbol{0}}}
\newcommand{\wtilde}{{\widetilde{w}}}
\newcommand{\bswtilde}{\widetilde{\bsw}}
\newcommand{\Utilde}{{\widetilde{U}}}
\newcommand{\lb}{\left(}
\newcommand{\rb}{\right)}
\newcommand{\la}{\left\langle}
\newcommand{\ra}{\right\rangle}
\newcommand{\lc}{\left\{}
\newcommand{\rc}{\right\}}
\newcommand{\ld}{\left[}
\newcommand{\rd}{\right]}
\newcommand{\Zbar}{{\overline{Z}}}
\newcommand{\pref}{\prettyref}
\newcommand{\dgend}{\operatorname{end}}
\newcommand{\per}{\operatorname{perf}}
\newcommand{\emf}[3]{\operatorname{mf} \left( \left[ #1 \middle/ #2 \right], #3 \right)}
\newcommand{\dual}{\vee}
\newcommand{\Yl}{\scrY^{\mathrm{l}}}
\newcommand{\Yr}{\scrY^{\mathrm{r}}}
\newcommand{\CC}{\operatorname{CC}}
\newcommand{\relmid}{\mathrel{}\middle|\mathrel{}}
\newcommand{\bsi}{\boldsymbol{i}}
\newcommand{\bsj}{{\boldsymbol{j}}}
\newcommand{\Gammahat}{\widehat{\Gamma}}
\newcommand{\bfW}{\mathbf{W}}
\newcommand{\cU}{\mathcal{U}}
\newcommand{\cY}{\mathcal{Y}}
\renewcommand{\wtilde}{\widetilde{\w}}
\newcommand{\cO}{\mathcal{O}}
\newcommand{\cM}{\mathcal{M}}
\newcommand{\cMbar}{\overline{\cM}}
\newcommand{\FunL}{\Fun^{\mathrm L}}
\newcommand{\bfk}{\mathbf{k}}
\newcommand{\bmu}{\boldsymbol{\mu}}
\newcommand{\simto}{\xto{\sim}}
\newcommand{\bCx}{\bC^\times}
\renewcommand{\Gm}{\bG_{\mathrm{m}}}
\newcommand{\rep}{\operatorname{rep}}
\newcommand{\Rbar}{\overline{R}}
\newcommand{\wv}{\check{\w}}
\newcommand{\dv}{\check{d}}
\newcommand{\hv}{\check{h}}
\newcommand{\Av}{\check{A}}
\newcommand{\Dv}{\check{D}}
\newcommand{\Vv}{\check{V}}
\newcommand{\Xv}{\check{X}}
\newcommand{\Yv}{\check{Y}}
\newcommand{\Ytilde}{\tilde{Y}}
\newcommand{\eA}{\EuScript{A}}
\newcommand{\eE}{\EuScript{E}}
\newcommand{\eS}{\EuScript{S}}
\newcommand{\eV}{\EuScript{V}}
\newcommand{\eX}{\EuScript{X}}
\newcommand{\eY}{\EuScript{Y}}
\newcommand{\eZ}{\EuScript{Z}}
\newcommand{\fA}{\mathfrak{A}}
\newcommand{\Bv}{\check{B}}
\newcommand{\Qgr}{\operatorname{Qgr}}
\title[Moduli of $A_\infty$-structures]{Homological mirror symmetry for Milnor fibers via moduli of $A_\infty$-structures}
\author[Y.~Lekili]{Yank\i\ Lekili}
\address{
Department of Mathematics,
Imperial College London,
South Kensington,
London,
SW7 2AZ,
United Kingdom.
}
\email{y.lekili@imperial.ac.uk}
\author[K.~Ueda]{Kazushi Ueda}
\address{
Graduate School of Mathematical Sciences,
The University of Tokyo,
3-8-1 Komaba,
Meguro-ku,
Tokyo,
153-8914,
Japan.}
\email{kazushi@ms.u-tokyo.ac.jp}
\begin{document} 

\begin{abstract}
We show that the base spaces
of the semiuniversal unfoldings
of some weighted homogeneous singularities
can be identified with moduli spaces of $A_\infty$-structures
on the trivial extension algebras of the endomorphism algebras
of the tilting objects.
The same algebras also appear in the Fukaya categories
of their mirrors.
Based on these identifications,
we discuss applications
to homological mirror symmetry for Milnor fibers, and
give a proof of homological mirror symmetry
for an $n$-dimensional affine hypersurface of degree $n+2$
and the double cover of the $n$-dimensional affine space
branched along a degree $2n+2$ hypersurface.
Along the way, we also give a proof of a conjecture of Seidel from \cite{seidelICM}
which may be of independent interest.
\end{abstract}

\maketitle

\setcounter{tocdepth}{1}

\section{Introduction}
 \label{sc:introduction}

\subsection{Moduli of elliptic curves}
 \label{sc:moduli of elliptic curves}

Our basic starting point is an algebraic variety with an isolated singularity
admitting a $\Gm$-action. The primordial example is the cusp singularity
defined by
\begin{align} \label{eq:A_2}
 \lc (x,y) \in \bA^2 \relmid \w(x,y) \coloneqq x^3 + y^2 = 0 \rc.
\end{align}
The main construction that we study in this paper originates from \cite{LP1}, where the
case of the cusp singularity was studied in detail. We recall this construction
in order to ease the reader to our topic before discussing higher-dimensional
singularities with a $\Gm$-action.

The cuspidal curve \pref{eq:A_2} has a $\Gm$-action
given by $t \cdot (x,y) = (t^2 x, t^3 y)$.
Thus the coordinate ring gets a grading with $\deg(x)=2$ and $\deg(y)=3$.
It can be compactified to the projective cone
\begin{align}
 \lc [x:y:z] \in \bP(2,3,1) \relmid \w(x,y) = 0 \rc
\end{align}
by adding one point.

The semiuniversal unfolding of $\w$ is given by 
\begin{align}
 \wtilde(x,y;u_4,u_6) \coloneqq x^3 + y^2 + u_4 x + u_6,
\end{align}
whose homogenization
\begin{align} \label{warmcusp}
 \bfW(x,y,z;u_4,u_6) \coloneqq x^3 + y^2 + u_4 xz^4 + u_6 z^6
\end{align}
defines the Weierstrass family
$
 \pi_{\cY} \colon \cY \to U \coloneqq \Spec \bfk[u_4,u_6]
$
of curves in $\bP(2,3,1)$.
Each curve $Y_u \coloneqq \pi_{\cY}^{-1}(u)$ is of arithmetic genus 1
and comes with a point
$
 p \coloneqq \{ z = 0 \}
$
at infinity
and a section
\begin{align}
 \Omega_u
  \coloneqq \Res \frac{z d x \wedge d y}{\bfW(x,y,z,u_4,u_6)}
\end{align}
of the dualizing sheaf,
which is given by
$
 dx/\bfW_y = -dy/\bfW_x
$
on the affine part.
The $\Gm$-action extends to the compactified family by
\begin{align}
 t \cdot ([x:y:z];u_4,u_6)
  &= ([t^2 x:t^3 y:z];t^4 u_4, t^6 u_6) \\
  &= ([x:y:t^{-1} z];t^4 u_4, t^6 u_6),
\end{align}
which preserves the section $z=0$ and satisfies 
\begin{align}
 t^* (\Omega_{t\cdot u})
  &= t^{-1}\Omega_{u}.
\end{align}
The curves $Y_u$ are elliptic curves
outside the discriminant
\begin{align}
 \Delta \coloneqq \lc (u_4, u_6) \in U \relmid
  4u_4^3 - 27u_6^2=0\rc.
\end{align}
If $u \in \Delta \setminus \bszero$,
then $Y_u$ is a rational curve with a single ordinary double point.
Note that all curves above a $\Gm$-orbit are isomorphic. 

The base space $U$
can be identified with the moduli space of triples $(Y,p,\Omega)$
consisting of a reduced connected curve $Y$ of arithmetic genus 1,
a smooth marked point $p$ on $Y$ such that
$h^0(\cO_Y(p))=1$ and $\cO_Y(p)$ is ample, and
a non-zero section $\Omega$ of the dualizing sheaf of $Y$
(see \cite[Theorem 1.4.2]{LPol}).
Furthermore,
we have an isomorphism
\begin{align}
 \cMbar_{1,1} \cong
 \ld \lb U \setminus \bszero \rb \middle/ \Gm \rd
 \quad ( \cong \bP(4,6) )
\end{align}
with the moduli stack of stable curves
of genus one with one marked point.

\subsection{Moduli of $A_\infty$-structures}

The condition that $\cO_{Y_u}(p)$ is ample is equivalent to 
\begin{align}
 \eS_u \coloneqq \cO_{Y_u} \oplus \cO_p
\end{align}
being a generator of the perfect derived category $\perf Y_u$. 
On the other hand, the fact that $h^0(\cO_{Y_u}(p))=1$ implies that
the isomorphism class of the Yoneda algebra
\begin{align}
 A \coloneqq \End \lb \eS_u \rb
\end{align}
as a graded algebra
is independent of $u \in U$.
Indeed,
it is easy to show that for any $u$,
there is a canonical isomorphism
(where we use the fixed basis $\Omega_u$ of $H^0 \lb \omega_{Y_u} \rb$)
between $A$ and
the degree one trivial extension algebra
of the path algebra of the $A_2$-quiver.
More concretely, this is given by the quiver with relations given in \pref{fg:cy1}. 

\begin{figure}[!h]

\begin{tikzpicture}
    \tikzset{vertex/.style = {style=circle,draw, fill,  minimum size = 2pt,inner    sep=1pt}}

        \tikzset{edge/.style = {->,-stealth',shorten >=8pt, shorten <=8pt  }}

\node[vertex] (a) at  (0,0) {};
\node[vertex] (a1) at (1.5,0) {};

\node at  (0,0.3) {\tiny 1};
\node at (1.5,0.3) {\tiny 2};


\draw[edge] (a)  to[in=150,out=30] (a1);
\draw[edge] (a1)  to[in=330,out=210] (a);

\node at (0.75,-0.5) {\tiny $v$};
\node at (0.75,0.5) {\tiny $u$};

\node at (5,0) {\small $|u|=0, \ |v|=1, \ uvu= vuv=0$};
\end{tikzpicture}
		\caption{Quiver algebra description of $A$}
\label{fg:cy1}
\end{figure} 

Thus,
considering the algebra $A$ results
in a dramatic loss of information
hidden in $\perf Y_u$,
even though $\eS_u$ is a generator.
This is, of course, no surprise as we have forgotten to derive.

Recall that an $A_\infty$-algebra $\cA$ over $\bfk$
is a graded $\bfk$-module
with a collection
$
 \lb \mu^d \rb_{d=1}^\infty
$
of $\bfk$-linear maps
$
 \mu^d \colon \cA^{\otimes d} \to \cA [2-d]
$
satisfying the $A_\infty$-associativity equations
\begin{align}
 \sum_{m,n} (-1)^{|a_1|+\ldots + |a_n|-n}
 \mu^{d-m+1}(a_d,\ldots, a_{n+m+1},
  \mu^m (a_{n+m},\ldots, a_{n+1}), a_n,\ldots a_1) = 0.
\end{align}
In particular,
$\mu^1 \colon \cA \to \cA[1]$ is a differential,
i.e.~$\mu^1 \circ \mu^1 =0$, and the product
\begin{equation}
a_2 \cdot a_1 = (-1)^{|a_1|} \mu^2(a_2,a_1)
\end{equation}
on $\cA$ is associative up to homotopy.

A \emph{minimal $A_\infty$-structure}
on a graded associative $\bfk$-algebra $A$
is an $A_\infty$-structure $(\mu^k)_{k=1}^\infty$
on the graded vector space underlying $A$
such that $\mu^1=0$ and $\mu^2$ coincides
with the given product on $A$.
It is said to be \emph{formal}
if $\mu^k = 0$ for $k > 2$.

Recall that the Hochschild cochain complex of a graded algebra $A$ has a bigrading,
where $\CC^{r+s}(A)_s$ consists of maps $A^{\otimes r} \to A[s]$.
The space of first-order deformations of $A$
as a graded algebra is given by $\HH^2(A)_{0}$,
and deformations to minimal $A_\infty$-structures on $A$
without changing $\mu^2$
is controlled by
$
 \HH^2(A)_{<0}
  \coloneqq \bigoplus_{i=1}^\infty \HH^2(A)_{-i}.
$
Moreover,
if
$\HH^1(A)_{<0}$ vanishes,
then
\cite[Corollary 3.2.5]{Pol}
shows that
the functor
sending a $\bfk$-algebra $R$
to the set of gauge equivalence classes of minimal $A_\infty$-structures on $A \otimes R$
is represented by an affine scheme $\cU_\infty(A)$,
which is of finite type
if $\dim \HH^2(A)_{<0} < \infty$.
%
There is a natural $\Gm$-action on $\cU_\infty(A)$
given by
\begin{align} \label{eq:Gm-action_on_U}
 \bGm \ni t \colon \lb \mu^d \rb_{d=2}^\infty
 \mapsto \lb t^{d-2} \mu^d \rb_{d=2}^\infty,
\end{align}
and the formal $A_\infty$ structure on $A$
is the fixed point of this action.

Returning back to the Weierstrass family,
as explained in \cite{LP2},
the natural dg enhancement
$\dgend(\eS)$ of $\End(\eS)$
gives a family $\eA$ of minimal $A_\infty$-structures on $A$ over $U$,
and hence a morphism
\begin{align} \label{eq:LP2}
 U \to \cU_{\infty}(A).
\end{align}
We recall the following theorem from \cite{LP2}.
For simplicity,
we state it over a field $\bfk$ with $\operatorname{char} \bfk \neq 2,3$,
see \cite{LP2} for a more general statement. 

\begin{thm} \label{th:cuspthm}
If $\operatorname{char} \bfk \ne 2, 3$,
then
\eqref{eq:LP2}
is a $\Gm$-equivariant isomorphism,
sending the cuspidal curve $Y_0$ to the formal $A_\infty$-structure on $A$.
\end{thm}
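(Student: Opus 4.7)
The plan is to pin down the morphism $U \to \cU_\infty(A)$ by a tangent space computation at the fixed point, then upgrade this to an isomorphism via the contracting $\Gm$-action.

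First I would check that the moduli space $\cU_\infty(A)$ is really a finite-type affine scheme by verifying that $\HH^1(A)_{<0} = 0$ and that $\dim \HH^2(A)_{<0} < \infty$. The algebra $A$ is the trivial extension $A_0 \oplus A_0^\dual[-1]$ of the path algebra $A_0$ of the $A_2$-quiver, so its Hochschild cohomology can be computed from the $(\mathrm{bimod}\,\text{-})\mathrm{Ext}$ spectral sequence for the short exact sequence $0 \to A_0^\dual[-1] \to A \to A_0 \to 0$ of bimodules, or equivalently by writing down a small explicit projective bimodule resolution of $A$ (which is manageable: the quiver has two vertices and four arrows with three Adams-degree-two quadratic relations). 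The Calabi--Yau duality of dimension $1$ for $A$, which is obvious from the trivial-extension description, identifies $\HH^2(A)_s \cong \HH^0(A)_{s+1}^\dual$, and a direct computation of the degree-$s$ part of the center $\HH^0(A)_s$ shows that it is nonzero precisely for $s = 0, 1$ (the unit and the trace), hence $\HH^2(A)_{<0}$ is concentrated in weights $-4$ and $-6$, with total dimension $2$. The same duality yields $\HH^1(A)_{<0} = 0$ from $\HH^1(A)_s \cong \HH^1(A)_{1-s}^\dual$ combined with a direct check in low degree.

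Next I would check that the morphism \eqref{eq:LP2} is $\Gm$-equivariant. The internal grading on $A$ comes from the weights $(2,3)$ on $(x,y)$, and the $\Gm$-action on $\cY \to U$ lifts to $\eS_u = \cO_{Y_u} \oplus \cO_p$ and to the chosen trivialization $\Omega_u$ of the dualizing sheaf with the weight formula $t^*\Omega_{t\cdot u} = t^{-1} \Omega_u$ recorded in the excerpt; tracking this through the construction of $\eA$ from $\dgend(\eS)$ in \cite{LP2} shows that the classifying map intertwines the two $\Gm$-actions on $U$ and on $\cU_\infty(A)$ (the latter being \eqref{eq:Gm-action_on_U}).

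With these in hand, the main step is to show that \eqref{eq:LP2} induces an isomorphism on Zariski tangent spaces at $\bszero \in U$, i.e.\ at the formal $A_\infty$-structure. On the source, the tangent space is $\bfk\langle \del/\del u_4, \del/\del u_6\rangle$ with weights $-4, -6$. On the target, it is $\HH^2(A)_{<0}$, which by the computation above is also two-dimensional with weights $-4, -6$. To show the map is an isomorphism of weight spaces, I would compute the Kodaira--Spencer classes of the infinitesimal deformations $\del_{u_4} \bfW$ and $\del_{u_6} \bfW$ in each weight and exhibit them as generators of $\HH^2(A)_{-4}$ and $\HH^2(A)_{-6}$; concretely, they correspond to the nontrivial length-$4$ and length-$6$ $A_\infty$ operations on $A$ that a direct Kadeishvili transfer from the Koszul resolution on the nodal and cuspidal fibers produces non-vanishingly.

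Finally I would conclude using the $\Gm$-action. Both $U = \Spec \bfk[u_4,u_6]$ and $\cU_\infty(A)$ carry $\Gm$-actions with strictly positive weights on all local coordinates at the fixed point $\bszero$/formal structure (this is where $\operatorname{char}\bfk \ne 2,3$ enters, to ensure semisimplicity of the $\Gm$-action on sections and to avoid pathologies in normalizing the $A_\infty$ operations by gauge transformations). Equivariance plus surjectivity on tangent spaces gives surjectivity on global functions; injectivity follows because $\cU_\infty(A)$ is reduced and irreducible of the expected dimension $2$ (which in turn follows from the unobstructedness provided by the identification with the smooth source $U$ after surjectivity is established, or by checking that the obstruction space $\HH^3(A)_{<0}$ behaves correctly under the deformation). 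The image of the origin is manifestly the formal structure since the cuspidal curve $Y_0$ is $\Gm$-fixed and the classifying map is equivariant. The main obstacle is the concrete Hochschild-cohomology computation of $\HH^2(A)_{<0}$ together with the identification of its basis with $\del_{u_4}, \del_{u_6}$; everything else is formal consequence of $\Gm$-equivariance and affineness.
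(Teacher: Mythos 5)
Your overall architecture (affineness of $\cU_\infty(A)$ from $\HH^1(A)_{<0}=0$, a tangent-space comparison at the $\Gm$-fixed point, then the contracting $\Gm$-action to globalize, with formality of $\eA_0$ extracted from $\Gm$-fixedness) is exactly the skeleton the paper attributes to \cite{LP2}. The genuine gap is in the one step that carries all the content: proving that the classifying map is an isomorphism, and not merely a map, between the two-dimensional weight spaces $T_0U$ and $\HH^2(A)_{<0}$. You propose to do this by exhibiting the Kodaira--Spencer classes of $\partial_{u_4},\partial_{u_6}$ as nonzero via an explicit transfer computation, but (i) the crucial non-vanishing is asserted rather than argued, and (ii) you aim at the wrong operations. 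Since $\mu^d$ scales with weight $d-2$ under \eqref{eq:Gm-action_on_U} while $u_4,u_6$ carry weights $4,6$, equivariance forces every coefficient of $\mu^d$ in the family to be weighted-homogeneous of degree $d-2$ in $(u_4,u_6)$; hence $\mu^3=\mu^4=\mu^5\equiv 0$ identically, the $u_4$-direction first appears in $\mu^6$, and the $u_6$-direction in $\mu^8$. Computing ``length-$4$ and length-$6$'' operations as you propose would return zero for $\mu^4$ and miss the $u_6$-direction entirely, so the proposed verification would not establish surjectivity. The paper's route (spelled out for the higher-dimensional analogue in the proof of \pref{th:main}, and the mechanism of \cite{LP2}) avoids product computations altogether: one uses the fine moduli interpretation of $U$ as the moduli of Weierstrass data $(Y,p,\Omega)$ to reconstruct the curve from the $A_\infty$-structure, producing $\psi$ with $\psi\circ\varphi=\id_U$; injectivity on tangent spaces then combines with $\dim U=\dim\HH^2(A)_{<0}\ge\dim T_0\,\cU_\infty(A)$ to give the tangent-space isomorphism.

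Two secondary points. First, the duality you invoke, $\HH^2(A)_s\cong\HH^0(A)_{s+1}^\dual$, is not a correct form of Calabi--Yau duality for this non-smooth algebra: for a graded symmetric algebra the bimodule isomorphism $A\cong A^\dual$ (up to shift) pairs Hochschild cohomology with Hochschild \emph{homology}, not with itself, and your two displayed instances of the duality are mutually inconsistent in the internal grading. The stated answer $\HH^1(A)_{<0}=0$, $\HH^2(A)_{<0}=\bfk(4)\oplus\bfk(6)$ is nonetheless correct and is obtained either from the explicit bimodule resolution (your fallback, which is fine) or from the matrix factorization computation of \pref{sc:sylvester}. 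Second, your injectivity step is circular as written; the standard fix is graded Nakayama (surjectivity on cotangent spaces plus positivity of all weights makes $\cO(\cU_\infty(A))$ a two-generator graded ring surjecting onto $\bfk[u_4,u_6]$, hence isomorphic to it), not reducedness of $\cU_\infty(A)$, which is not known a priori. The hypothesis $\operatorname{char}\bfk\ne 2,3$ enters not through reductivity of $\Gm$ (which holds in all characteristics) but because otherwise $x^3+y^2$ has non-isolated critical locus, the Weierstrass normal form acquires extra terms, and $\HH^2(A)_{<0}$ is larger.
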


There are two main ingredients that enter in the proof of this result:
\begin{enumerate}[(i)]
 \item \label{it:item1}
The formality of the $A_\infty$-algebra $\eA_0$
for the cuspidal curve $Y_0$.
 \item
One has $\HH^1(A)_{<0}=0$, so that $\cU_\infty(A)$ is an affine scheme,
and
\begin{align} \label{eq:HH2A0}
\HH^2(A)_{<0} = \bfk(4) \oplus \bfk(6),
\end{align}
so that \pref{eq:LP2} induces an isomorphism
on tangent spaces
at the fixed points of the $\Gm$-action.
\end{enumerate}
Here \pref{eq:HH2A0} means that $\HH^2(A)_s=\bfk$ for $s=-4,-6$
and zero otherwise.

The Hochschild cohomology computation is done
in two different ways in \cite{LP1} and \cite{LP2}.
We will give yet another way in \pref{sc:sylvester} below.

To elaborate on \eqref{it:item1},
first one shows the existence of a chain level $\Gm$-action
by taking the \v{C}ech complex
with respect to a $\Gm$-invariant affine cover.
This gives a dg model for $\cA_0$.
Then, one arranges a $\Gm$-equivariant homotopy to a minimal $A_\infty$-structure,
which follows from the fact that one
can choose chain level representatives of a basis of $\End(\eS_0)$
in such a way that each of them is in a one-dimensional representation of $\Gm$.
Finally, to deduce formality, one shows that the weight of the $\Gm$-action
on $\End(\eS_0)$ agrees with the cohomological grading.
But $\mu^d$ lowers the cohomological degree by $d-2$,
so any $\Gm$-equivariant $A_\infty$-structure must have vanishing $\mu^d$ for $d \neq 2$. 

Other examples of the above construction were subsequently studied in \cite{Pol,LPol},
but all of these work with examples in dimension one.
In this paper, we begin to explore higher dimensions.

\subsection{Application to homological mirror symmetry}

Let $\Vv$ be a once-punctured torus
viewed as a Weinstein manifold,
and
\begin{align}
 Z \coloneqq \lc [x:y:z] \in \bP(2,3,1) \relmid x^3 + y^2 + xyz=0 \rc
\end{align}
be a rational curve with a single ordinary double point.
\pref{th:cuspthm} was obtained in \cite{LP2}
as a tool for proving a quasi-equivalence
\begin{align} \label{eq:punctured}
 \cF \lb \Vv \rb \simeq \perf Z
\end{align}
of pretriangulated $A_\infty$-categories over $\Z$
of the split-closed derived Fukaya category
of compact exact Lagrangians in $\Vv$
and the perfect derived category of $Z$.
The strategy is first to identify generators on both sides,
and then match their endomorphism algebras as $A_\infty$-algebras.
It is often difficult to explicitly compute such $A_\infty$-algebras,
but even if one does,
finding a quasi-isomorphism between two different chain models is usually a hard task.
The computation of cohomology level structures (and matching them) is much easier, and knowing the moduli of $A_\infty$-structures allows one to appeal to indirect methods to conclude the proof of the existence of a chain level isomorphism. Such a strategy was applied also for proving homological mirror symmetry in a number of other cases in dimension one. Namely, in \cite{LPol} a class of curve singularities $C_{1,n}$ for $n \geq 1$ were considered, where $C_{1,1}$ is the cuspidal curve, $C_{1,2}$ is tacnodal curve given by the equation $y^2 = yx^2$, and $C_{1,n}$ is the elliptic $n$-fold singularity given by $n$ lines in $\mathbb{A}^{n-1}$. These are all the Gorenstein singularities of arithmetic genus one \cite[Appendix A]{Smyth}. Carrying out the above strategy has led to a proof of homological mirror symmetry for $n$-punctured tori \cite{LPol2}. 

The equivalence \pref{eq:punctured} is an instance
of homological mirror symmetry at the large volume limit.
The equivalence is known to extend to a formal neighborhood of this limit
to give an equivalence
\begin{align} \label{eq:nov}
 \cF \lb \Yv \rb \simeq \perf \hat{\cY} 
\end{align}
over $\bZ\db[q\db]$
where $\Yv$ is the compactification of $\Vv$ and
$\hat{\cY}$ is the Tate elliptic curve,
a formal neighborhood of the nodal curve $Z$
(see \cite{LP2} for a proof).
A general strategy for proving homological mirror symmetry as in \pref{eq:nov}
introduced in \cite{seidelICM}
is to view the categories in \pref{eq:nov}
as deformations of the categories given in \pref{eq:punctured}.
Hence,
in this context
deducing homological mirror symmetry for the compact manifold $\Yv$
from homological mirror symmetry for the Weinstein manifold $\Vv$
ultimately reduces to a problem in deformation theory.

\subsection{New results and a general conjectural picture}

In this paper, we lay out a program that aims to extend the above results to
higher dimensions, leading to new homological mirror symmetry conjectures for
higher-dimensional Calabi--Yau manifolds at the large volume limit and in its
formal neighborhood.  It is based on the relation between homological mirror
symmetry for Calabi--Yau manifolds and homological mirror symmetry for
singularities, which goes back to \cite{MR3044454,MR3294421,MR3223358}.

A weighted homogeneous polynomial
$
 \w \in \bC[x_1,\ldots,x_n]
$
with an isolated critical point at the origin
is \emph{invertible}
if there is an integer matrix
$
 A = (a_{ij})_{i, j=1}^n
$
with non-zero determinant
such that
\begin{align}
 \w = \sum_{i=1}^n \prod_{j=1}^n x_j^{a_{ij}}.
\end{align}
The corresponding
weight system
$
 (d_1, \ldots, d_n; h)
$
satisfying $\mathrm{gcd}(d_1,\ldots, d_n, h)=1$ is determined uniquely.
(See the beginning of \pref{sc:whsing} for the definition
of a weight system.)

The \emph{transpose} of $\w$ is defined in \cite{MR1214325} as
\begin{align}
 \wv =  \sum_{i=1}^n \prod_{j=1}^n x_j^{a_{ji}},
\end{align}
whose exponent matrix $\Av$
is the transpose matrix of $A$.
We write
$
 \lb \dv_1, \dv_2,\ldots, \dv_n; \hv \rb
$
for the weight system associated with $\wv$.

The group
\begin{align}
 \Gamma_\w \coloneqq
 \lc (t_1, \ldots, t_n) \in (\bGm)^{n} \relmid
  t_1^{a_{11}} \cdots t_n^{a_{1n}}
   = \cdots
   = t_1^{a_{n1}} \cdots t_n^{a_{nn}}
   \rc \end{align}
acts naturally on $\bA^{n}$.
One has a homomorphism
$
 \phi \colon \bGm \to \Gamma_{\w}
$
sending $t \in \bGm$ to
$
 \lb t^{d_1}, \ldots, t^{d_n} \rb \in \Gamma_{\w}.
$
Let
$
 \emf{\bA^n}{\Gamma_{\w}}{\w}
$
be
the idempotent completion of
the dg category of $\Gamma_{\w}$-equivariant matrix factorizations
of $\w$.

Homological mirror symmetry conjecture for invertible polynomials
is the following:

\begin{conj} \label{cj:HMS_sing}
For any invertible polynomial $\w$,
one has a quasi-equivalence
\begin{align}
 \emf{\bA^n}{\Gamma_{\w}}{\w}
  \simeq \cW(\wv).
\end{align}
\end{conj}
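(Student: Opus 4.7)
The plan is to apply the moduli-of-$A_\infty$-structures strategy whose prototype is \pref{th:cuspthm}: find generators of each side with abstractly isomorphic graded endomorphism algebras, interpret each category as a point of the moduli space of minimal $A_\infty$-structures on that algebra, and match the two points using the $\bGm$-action induced by weighted homogeneity.

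First I would treat the B-side. For a tilting object $T$ of $\mf(\bA^n, \Gamma_\w, \w)$---whose existence is known for chain, loop, and Brieskorn--Pham types, and is conjectural in general---set $A \coloneqq \End(T)$ as a $\Gamma_\w$-graded algebra. Because the singularity category is (fractionally) Calabi--Yau, the $A_\infty$-home of interest is the trivial extension algebra $B \coloneqq A \oplus A^*[\sigma]$ for the appropriate shift $\sigma$ determined by the weight system. The dg enhancement of $\mf(\bA^n, \Gamma_\w, \w)$ refines $B$ to a minimal $A_\infty$-structure, and varying $\w$ in its semiuniversal unfolding $\bfW$ produces a $\bGm$-equivariant morphism from the base of the unfolding to $\cU_\infty(B)$. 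Following \pref{th:cuspthm} I would try to show this map is an isomorphism by verifying $\HH^1(B)_{<0} = 0$ (so that $\cU_\infty(B)$ is an affine scheme) and matching $\HH^2(B)_{<0}$ to the tangent space of the unfolding, via a Koszul- or Sylvester-type calculation in the style of \pref{sc:sylvester}.

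On the A-side I would exhibit a generating collection of Lagrangians in $\cW(\wv)$, for instance a distinguished basis of vanishing cycles for a $\bGm$-compatible Morsification of $\wv$, and identify their Floer endomorphism algebra combinatorially with $B$. The fact that both sides produce the \emph{same} graded algebra $B$ from the exponent matrices $A$ and $\Av$ is the key consistency check and reflects the exponent-matrix mirror symmetry of \cite{MR1214325}. The Fukaya enhancement then yields a second $\bGm$-equivariant map into $\cU_\infty(B)$, and the conjecture reduces to showing that the two maps agree; at the large-volume limit this is a statement comparing two $\bGm$-fixed points, and in a neighborhood it is a comparison of the induced deformations.

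The hardest step, and the main obstacle in general, is the central-fiber case: establishing formality of $B$ at the $\bGm$-fixed point on both sides. On the B-side this is the higher-dimensional analogue of the formality step in the cusp proof and should follow from a $\bGm$-equivariant \v{C}ech model together with the weight-versus-degree argument recalled in the introduction: $\mu^d$ lowers cohomological degree by $d-2$, so once weight equals cohomological degree on a $\bGm$-equivariant model the only surviving operation is $\mu^2$. On the A-side one needs $\bGm$-equivariant Floer data for the radial action on the Milnor fiber, with enough control on holomorphic discs to identify $\mu^d$ with the weight-$(d-2)$ part of an equivariant splitting. Establishing these two formality statements in full generality, together with the existence of a tilting object for every invertible polynomial and the finiteness and correct size of $\HH^2(B)_{<0}$, are the ingredients that would need to be supplied; this is why the full statement remains a conjecture and why the paper settles only specific classes where each of these pieces can be made explicit.
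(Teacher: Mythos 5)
The statement you were asked about is \pref{cj:HMS_sing}, which the paper states as a conjecture and does not prove; it only records that it is known for $n=2$ \cite{1903.01351} and for Sebastiani--Thom sums of type A and D \cite{FU1,FU2}. So there is no ``paper's proof'' to match, and your text is (as you yourself acknowledge in the last paragraph) a program rather than a proof. The more substantive problem is that the program you describe is aimed at the wrong conjecture. The trivial extension algebra $B = A^0 \oplus (A^0)^*[\sigma]$, the semiuniversal unfolding of $\w$, and the moduli space $\cU_\infty$ are the paper's machinery for \pref{cj:affine_HMS} and \pref{th:HMS}, i.e.\ for the \emph{compact} Fukaya category of the Milnor fiber $\Vv_{\wv}$ versus $\perf$ of fibers $\eY_u$ of the unfolding (\pref{th:main}). \pref{cj:HMS_sing} instead compares the undeformed singularity category $\mf(\bA^n,\Gamma_\w,\w)$ with the partially wrapped (Fukaya--Seidel) category $\cW(\wv)$; both sides carry full exceptional collections, the relevant endomorphism algebra is the directed algebra $A^0$ itself rather than its Frobenius completion, and there is no unfolding parameter on either side to feed into a moduli-of-$A_\infty$-structures comparison --- both categories sit at the $\bGm$-fixed point, so the method you invoke has nothing to distinguish.

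Concretely: your ``key consistency check'' that both sides produce the same graded algebra $B$ is the check relevant to $\cF(\Vv_{\wv})$ versus $\perf \eZ_\w$, where vanishing cycles have $(n-1)$-Calabi--Yau endomorphism algebras; the Lagrangians generating $\cW(\wv)$ are Lefschetz thimbles, whose Floer endomorphism algebra is directed and not a trivial extension algebra. A deformation-theoretic route to \pref{cj:HMS_sing} would have to argue instead about uniqueness of $A_\infty$-structures on the directed algebra $A^0$ (e.g.\ vanishing of the relevant truncated Hochschild cohomology, or intrinsic formality), which is a different computation from the $\HH^*(\eY)$ calculations of \pref{sc:HH_mf}, and which the existing proofs in the cited special cases replace by direct Picard--Lefschetz computations of the Fukaya--Seidel category. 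The obstacles you correctly identify (existence of tilting objects, formality at the fixed point, control of $\HH^2_{<0}$) are genuine, but they are the obstacles to the Milnor-fiber conjectures, not to this one.
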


Here $\cW(\wv)$ is the partially wrapped Fukaya category of $\wv$,
which is quasi-equivalent to the Fukaya--Seidel category of
(a Morsification of) $\wv$.
\pref{cj:HMS_sing} is stated for Brieskorn--Pham singularities in 3 variables
in \cite{0604361},
for polynomials in 3 variables associated with a regular system of weights of dual type
in the sense of Saito
in \cite{MR2683215}
(with a prototype appearing earlier in \cite{0711.3907v1}),
and
for invertible polynomials in 3 variables
in \cite{MR2834726}.
It is proved
for $n=2$ in \cite{1903.01351}, and
for Sebastiani--Thom sums of polynomials of type A and D
in \cite{FU1,FU2}.

The conjecture that
$
\emf{\bA^n}{\Gamma_\w}{\w}
$
has a full exceptional collection,
which is implied by \pref{cj:HMS_sing},
is stated in
\cite[Conjecture 1.4]{1809.09940},
and proved
in \cite{2001.06500}.

The following conjecture
is stated for $n=3$ in \cite{MR2834726}:

\begin{conj} \label{cj:tilting}
For any invertible polynomial $\w$,
the category
$
\emf{\bA^n}{\Gamma_\w}{\w}
$
has a tilting object.
\end{conj}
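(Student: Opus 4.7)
The plan is to deduce Conjecture~\pref{cj:tilting} from Conjecture~\pref{cj:fsec} via Orlov's theorem, as hinted at in the paragraph immediately preceding the statement. Let $R_\w \coloneqq \bC[x_1,\ldots,x_n]/(\w)$ equipped with its natural $\Gamma_\w$-action, and let $a_\w \coloneqq h - \sum_{i=1}^n d_i$ be its Gorenstein parameter. Since $\w$ has an isolated critical point at the origin, the equivariant graded ring $R_\w$ is Gorenstein with isolated singularity, and Orlov's theorem applies in this $\Gamma_\w$-equivariant setting: depending on the sign of $a_\w$, either $\mf(\bA^n,\w,\Gamma_\w)$ embeds fully faithfully into $D^b(\eX_\w)$ with semi-orthogonal complement generated by a finite sequence of line bundles $\cO(i),\ldots,\cO(j)$ (when $a_\w < 0$), or $D^b(\eX_\w)$ embeds fully faithfully into $\mf(\bA^n,\w,\Gamma_\w)$ with complement generated by finitely many objects arising from the natural cotruncation functor (when $a_\w > 0$); the two categories are equivalent when $a_\w = 0$.

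Granting Conjecture~\pref{cj:fsec}, the stack $\eX_\w$ admits a tilting object $T = \bigoplus_k L_k$ that is a direct sum of line bundles. I would produce a candidate tilting object in $\mf(\bA^n,\w,\Gamma_\w)$ as follows. In the case $a_\w = 0$, transport $T$ across the equivalence. In the case $a_\w < 0$, the full subcategory spanned by $T$ together with the line bundles $\cO(i),\ldots,\cO(j)$ generating the complement exhausts $D^b(\eX_\w)$; after a sequence of mutations to produce a decomposition compatible with the embedding $\mf \hookrightarrow D^b(\eX_\w)$, the piece lying in $\mf$ would be the candidate. In the case $a_\w > 0$, symmetrically, transport $T$ along the embedding $D^b(\eX_\w) \hookrightarrow \mf$ and enlarge it by the finitely many generators of the complement. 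In all cases, the resulting object generates $\mf(\bA^n,\w,\Gamma_\w)$ for purely formal reasons since $T$ and the complement together generate the ambient category on the Orlov side.

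The main obstacle will be verifying Ext-vanishing for this candidate. The higher Ext groups among the $L_k$ vanish inside $D^b(\eX_\w)$ by the tilting hypothesis, and the Ext groups among the additional generators introduced by Orlov's decomposition are controlled by the standard semi-orthogonality; the delicate point is the cross-Ext groups between the two blocks and, especially when $a_\w > 0$, understanding how Ext groups change after passage to the singularity/matrix factorization category, where morphisms factoring through perfect complexes get killed and new morphisms through shifts of the Auslander--Reiten translate can appear. I would expect this to be controlled by the Koszul resolution of $\eX_\w$ as a hypersurface in the weighted projective stack $\P(d_1,\ldots,d_n)$ together with Bott-type vanishing there, but a uniform argument covering all invertible $\w$ is likely to require case analysis along the Berglund--H\"ubsch decomposition into Fermat, chain, and loop atoms, plus their Thom--Sebastiani sums, and this is where the analogy with the one-variable proofs in \cite{FU1,FU2} would need to be systematically extended.
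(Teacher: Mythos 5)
The statement you are trying to prove is \pref{cj:tilting}, which the paper presents as an open \emph{conjecture}: no proof is given or claimed, and the conjecture is instead used as a hypothesis in \pref{th:main}. The paper only remarks that Orlov's theorem \emph{relates} \pref{cj:tilting} to \pref{cj:fsec}, that the weaker statement about full exceptional collections is known, and that the conjecture is verified in special cases elsewhere. So there is no ``paper's own proof'' to match, and your proposal should not be read as closing the problem.

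Beyond that, your argument has two genuine gaps. First, it is conditional on \pref{cj:fsec}, which is itself an open conjecture (and, as the paper notes, the analogous conjecture of King for toric varieties is \emph{false} in general, so \pref{cj:fsec} is not a safe foundation). A reduction of one open conjecture to another is not a proof. Second, even the conditional reduction is incomplete at exactly the point you flag: a semiorthogonal decomposition does not transport tilting objects. In the relevant case $d_0 > 0$ the Gorenstein parameter of $\bC[x_1,\ldots,x_n]/(\w)$ is negative, so Orlov's theorem embeds $D^b(\eX_\w)$ into $\mf(\bA^n,\w,\Gamma_\w)$ with complement generated by $d_0$ exceptional objects (twists of the residue field); taking the direct sum of a tilting object $T$ on $\eX_\w$ with these exceptional objects generates, but the cross-$\Ext$ groups between the two blocks live in the singularity category, where morphism spaces are altered (maps through perfect complexes are killed, Serre-duality ``wrap-around'' morphisms appear), and there is no general mechanism forcing them to be concentrated in degree $0$. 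Your appeal to ``Bott-type vanishing'' and a case analysis over Fermat/chain/loop atoms is a reasonable research direction, but as written it is a statement of intent rather than an argument, and it is precisely the content of the conjecture. In short: the approach is a sensible heuristic for \emph{why one might believe} \pref{cj:tilting} given \pref{cj:fsec}, consistent with the paper's own framing, but it is not a proof.
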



A slightly stronger conjecture
that
$
\emf{\bA^n}{\Gamma_\w}{\w}
$
has a full strong exceptional collection,
stated in \cite[Conjecture 1.2]{1809.09940},
is known
for $n \le 3$
by \cite{1911.09859},
and
for a class of invertible polynomials called of chain type
by \cite{1809.09940}.

In view of \cite[Theorem 16]{MR2641200},
one may also ask whether
for an invertible polynomial $\w$,
the derived category of coherent sheaves
on the stack
\begin{align}
\eX_\w
\coloneqq \ld \lb \lb \Spec \bC[x_1,\ldots,x_n]/(\w) \rb \setminus \bszero \rb
\middle/ \Gamma_\w \rd
\end{align}
has a tilting object.
If $\w$ is of Brieskorn--Pham type,
then $\eX_\w$ has a full strong exceptional collection
of line bundles
\cite{MR3050701}.
Note that $\eX_\w$ is always a smooth proper rational stack
of Picard number one.
It is known that
for a smooth proper toric Deligne--Mumford stack
of Picard number at most two,
there exists
a full strong exceptional collection
of line bundles
\cite{MR2509327}.
On the other hand,
the stack $\eX_\w$
does not have a full strong exceptional collection
of line bundles in general
--- a counterexample was given in
\cite{2004.04982}.

We write (the Liouville completion of) the Milnor fiber of $\wv$ as
\begin{align}
 \Vv_{\wv}
  \coloneqq \wv^{-1}(1)
  = \lc (x_1,\ldots,x_n) \in \bC^n \relmid \wv = 1 \rc.
\end{align}

The main conjecture that we introduce in this paper is the following:

\begin{conj} \label{cj:affine_HMS}
For any invertible polynomial $\w$,
one has a quasi-equivalence
\begin{align} \label{eq:affine_HMS}
 \emf{\bA^{n+1}}{\Gamma_\w}{\w + x_0 \cdots x_n}
  \simeq \cW \lb \Vv_{\wv} \rb.
\end{align}
\end{conj}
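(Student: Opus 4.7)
The plan is to lift the moduli-of-$A_\infty$-structures strategy behind \pref{th:cuspthm} from curves to higher dimensions and apply it to both sides of \eqref{eq:affine_HMS} simultaneously, identifying both categories with the same point of a common moduli space of $A_\infty$-structures on a concrete graded algebra $A$. The first step is to produce compatible generators. On the B-side, assuming \pref{cj:tilting} provides a tilting object $T$ of $\mf(\bA^n,\w,\Gamma_\w)$, a Kn\"orrer-periodicity type stabilization by the extra factor $x_0 \cdots x_n$ should yield a tilting object $\widetilde{T}$ of $\mf(\bA^{n+1}, \w + x_0 \cdots x_n, \Gamma_\w)$. On the A-side, I would generate $\cW(\Vv_{\wv})$ by Lagrangians mirror to the summands of $\widetilde{T}$, obtained either by Viterbo restriction from an exceptional collection on the Fukaya--Seidel category of $\wv$ (\pref{cj:HMS_sing}) or via an explicit Lefschetz fibration model of $\Vv_{\wv}$ adapted to the transpose.

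Next, I would compute the graded endomorphism algebra $A = H^*\End(\widetilde{T})$, which should be a graded Calabi--Yau $(n+1)$-algebra: the trivial extension of $H^*\End(T)$ by its dualizing bimodule placed in the internal degree dictated by the $\Gamma_\w$-action. The cohomology-level match with $H^*\End$ of the Lagrangian generators is then reduced to dimension counts and ring-structure compatibility. With $A$ in hand, the next task is to verify $\HH^1(A)_{<0} = 0$ so that by \cite{Pol} the moduli functor $\cU_\infty(A)$ is representable by a finite-type affine scheme, and to identify $\cU_\infty(A)$ $\Gm$-equivariantly with the base of the $\Gamma_\w$-equivariant semiuniversal unfolding of $\w$, with the universal $A_\infty$-structure on $A$ coinciding with the family of matrix factorization categories obtained by varying $\w$ in this unfolding.

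The final step is to show that the Fukaya side corresponds to the $\Gm$-fixed point of $\cU_\infty(A)$, matching the undeformed B-side. In the curve case this followed from a weight argument forcing formality; in higher dimensions formality is not automatic, so I would aim instead for a $\Gm$-equivariant chain model of $\cW(\Vv_{\wv})$ arising from an explicit $\Gamma_\w$-invariant Weinstein sectorial decomposition of $\Vv_{\wv}$, a Liouville-sectorial analogue of the \v{C}ech construction used in \cite{LP2}. The main obstacle is exactly this last step: establishing enough $\Gm$-equivariance on the Fukaya side to pin the $A_\infty$-structure down to the unique $\Gm$-fixed orbit, and ruling out exotic higher products compatible with the weight grading. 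A secondary obstacle is \pref{cj:tilting} itself, which is open in general; one therefore expects to execute the program case-by-case, extending the results of \cite{1809.09940,2001.06500} and matching the specific tilting object with explicit Lagrangian generators on each side.
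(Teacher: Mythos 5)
This statement is a conjecture in the paper, proved there only for two specific families (\pref{th:HMS}) and only at the level of compact Fukaya categories versus perfect complexes; so what you have written is rightly a program rather than a proof. Your first three steps track the paper's actual architecture closely: generators from tilting objects, the degree $n-1$ trivial extension algebra $A$, the vanishing of $\HH^1(A)_{<0}$, and the $\Gm$-equivariant identification of $\cU_\infty(A)$ with the positive part of the unfolding base is exactly \pref{th:main}.

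The genuine gap is in your final step, and it is not a technical obstacle but a wrong target: the Fukaya category does \emph{not} correspond to the $\Gm$-fixed point of $\cU_\infty(A)$, and the B-side of \pref{eq:affine_HMS} is not the undeformed cone. The fixed point is the formal $A_\infty$-structure, which under \pref{th:main} corresponds to the origin $0 \in U$, i.e.\ to $\mf\lb\bA^{n+1},\w,\Gamma_\w\rb$ with no $x_0\cdots x_n$ term; the conjecture instead pairs $\cW\lb\Vv_{\wv}\rb$ with the unfolding by $x_0\cdots x_n$, a nonzero point of $U$. Correspondingly, the paper proves (\pref{th:non-formality}) that the vanishing-cycle $A_\infty$-algebra $\cA$ is \emph{not} formal, by establishing $\HH^*\lb\cF\lb\Vv\rb\rb \cong \SH^*\lb\Vv\rb$ (\pref{cr:seidelconj}) and observing $\SH^1\lb\Vv\rb = 0$ while $\HH^1(A) \neq 0$. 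So the $\Gm$-equivariant chain model you propose for $\cW\lb\Vv_{\wv}\rb$ cannot exist in the form that would force formality --- it would contradict this computation --- and if your program succeeded it would prove a false equivalence. The paper's actual endgame is the opposite: use non-formality to place $\cA$ in $\cM_\infty(A) = \ld\lb\cU_\infty(A)\setminus\bszero\rb/\Gm\rd$, then distinguish the candidate points by comparing $\HH^*$ of the singular fibers $\eY_u$ (computed via matrix factorizations in \pref{sc:cusp} and \pref{sc:odp}) with $\SH^*\lb\Vv\rb$ (computed via the McLean/Ganatra--Pomerleano spectral sequence). This only pins down the mirror when $\cM_\infty(A)$ is small enough that Hochschild cohomology separates its points --- which is precisely why the paper can close the argument only for $x_1^{n+1}+\cdots+x_n^{n+1}$ and $x_1^2+x_2^{2n}+\cdots+x_n^{2n}$, and why the general conjecture remains open.
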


The affine variety $\Vv_{\wv}$ is
log Fano,
log Calabi--Yau, or
of log general type
depending on whether
$
 \dv_0 \coloneqq \hv - \sum_{i=1}^n \dv_i
$
is negative, zero, or positive respectively.
In dimension 2,
the log Fano case corresponds
to simple singularities
which have a well-known ADE classification.
Fukaya categories of their Milnor fiber are identified
in \cite{EtLe, EtLe2}
with module categories of the corresponding
(derived) preprojective algebras,
and \pref{cj:affine_HMS} is proved in
\cite{2004.07374}.
The log Calabi--Yau case follows from
homological mirror symmetry
for the wrapped Fukaya categories of the Milnor fibers of
hypersurface cusp singularities
proved in \cite{keating}
by a variation of Orlov's theorem.
In this paper,
we almost exclusively concentrate
on the case of log general type.
See e.g.~\cite[Section 2]{uedasurvey}
for more on this trichotomy in dimension 2.

In the log general type case,
Orlov's theorem gives an equivalence
of the left hand side of \pref{eq:affine_HMS}
with the derived category $\coh \eZ_\w$
of coherent sheaves on
\begin{align}
 \eZ_\w \coloneqq \ld \lb \Spec \bC[x_0,\ldots,x_n] / (\w + x_0 x_1 \cdots x_n) \setminus \bszero \rb
   \middle/ \Gamma_\w \rd,
\end{align}
where the action of $\Gamma_\w$
comes from the identification
\begin{align}
 \Gamma_\w \cong
 \lc (t_0,t_1, \ldots, t_n) \in (\bGm)^{n+1} \relmid
  t_1^{a_{11}} \cdots t_n^{a_{1n}}
   = \cdots
   = t_1^{a_{n1}} \cdots t_n^{a_{nn}} = t_0t_1\ldots t_n
   \rc.
\end{align}

Recall that an object $X$ of $\coh \eZ$ on a proper stack $\eZ$ is perfect
if and only if
it is \emph{$\Ext$-finite},
i.e., 
the dimension of $\bigoplus_{i \in \bZ} \Hom^i(X, Y)$ is finite
for any object $Y$.
It is reasonable to expect that the full subcategory
of the wrapped Fukaya category $\cW \lb \Vv_{\wv} \rb$
consisting of $\Ext$-finite objects is equivalent
to the compact Fukaya category $\cF \lb \Vv_{\wv} \rb$,
so that \pref{cj:affine_HMS} would imply
\begin{align} \label{eq:affinec_HMS2}
 \perf \eZ_\w \simeq \cF \lb \Vv_{\wv} \rb.
\end{align}
The first instance of an equivalence of this form was obtained in \cite{LP1} for $\w = x_1^3+x_2^2$ and recently Habermann proved this equivalence when $\w$ is an arbitrary invertible polynomial of two variables \cite{Habermann}.

The way that the wrapped Floer cohomology can be infinite
depends on the sign of $\dv_0$;
it can be infinite in the negative cohomological degrees
with finite graded pieces
in the log Fano case,
infinite in finite cohomological degrees
in the log Calabi--Yau case,
and infinite in the positive cohomological degrees
with finite graded pieces
in the log general type case.
In the log Fano and log Calabi--Yau case,
the quotient $\cW(\Vv_{\wv})/\cF(\Vv_{\wv})$ are
generalized cluster categories
(see e.g.~\cite[Section 9]{MR2681708} and references therein).
In the log general type case,
we make the following conjecture,
which is a compact analog of \cite[Conjecture 1.2]{MR3838112}:

\begin{conj} \label{cj:W/F}
Let $\Xv$ be a smooth ample divisor
in a Calabi--Yau manifold $\Yv$
and $\Vv \coloneqq \Yv \setminus \Xv$ be the complement.
Then one has a quasi-equivalence
\begin{align} \label{eq:W/F}
 \cW\lb \Vv \rb / \cF\lb \Vv \rb \simeq \cF(\Xv).
\end{align}
\end{conj}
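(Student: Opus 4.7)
The plan is to construct an $A_\infty$-functor
\begin{align}
 \Phi \colon \cF(\Xv) \to \cW(\Vv) / \cF(\Vv)
\end{align}
via linking disks and then verify that it is a quasi-equivalence. Since $\Xv$ is a smooth ample divisor in the Calabi--Yau manifold $\Yv$, a Weinstein tubular neighborhood of $\Xv$ can be identified with the unit disk bundle of the normal bundle $N_{\Xv/\Yv}$, and by Calabi--Yau adjunction this normal bundle is isomorphic to $K_\Xv^{\dual}$. Under the Liouville completion of $\Vv = \Yv \setminus \Xv$, the unit circle bundle $\partial N$ appears as the ideal contact boundary. To each compact exact Lagrangian brane $L \subset \Xv$, I would associate its \emph{linking disk} $\widetilde{L} \subset \Vv$, defined as the Liouville-conical extension into the cylindrical end of the circle-fiber lift of $L$. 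This assignment extends to an $A_\infty$-functor from $\cF(\Xv)$ to $\cW(\Vv)$, and composing with the quotient projection defines $\Phi$.

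For essential surjectivity, I would invoke a generation argument modelled on \cite[Conjecture 1.2]{MR3838112}: modulo $\cF(\Vv)$, the category $\cW(\Vv)$ is generated by the image of the linking-disk construction. This should follow from the generation-by-cocores theorem for Weinstein manifolds combined with the observation that cocores of handles attached away from $\Xv$ already lie in $\cF(\Vv)$, so that only the cocores concentrated near $\Xv$---precisely the linking disks---survive nontrivially in the quotient.

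The main obstacle will be cohomological full faithfulness. The wrapped morphism space $\Hom_{\cW(\Vv)}(\widetilde{L}_0, \widetilde{L}_1)$ is computed by Reeb chords on $\partial N$, which organize into winding sectors indexed by their wrapping number around the circle fiber. The zero-winding sector should reproduce the compact Floer complex $\CF(L_0, L_1)$ computed inside $\Xv$, while the higher-winding sectors must collapse upon passage to the quotient by $\cF(\Vv)$. Making this rigorous likely requires an action-filtration spectral sequence whose $E_1$-page identifies the zero-winding sector with $\CF(L_0, L_1)$, together with a mechanism that kills the higher-winding sectors in the quotient; I would look for a clean route via an adaptation of the Ganatra--Pardon--Shende stop-removal or Viterbo-restriction formalism to the compact Calabi--Yau setting. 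The Calabi--Yau condition on $\Yv$ enters through the identification of $N_{\Xv/\Yv}$ with $K_\Xv^{\dual}$, which is precisely what shifts the answer from a wrapped-like description to the compact Fukaya category $\cF(\Xv)$, and getting this Serre-duality-type shift to come out correctly is the subtle conceptual point to be checked.
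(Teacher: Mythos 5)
The first thing to say is that the statement you are addressing is \pref{cj:W/F} of the paper: it is stated as a \emph{conjecture} (introduced as a compact analogue of \cite[Conjecture 1.2]{MR3838112}) and the paper offers no proof of it; it is only used to derive further conjectural consequences, not in the proofs of Theorems \ref{th:main} or \ref{th:HMS}. So there is no argument in the paper to compare yours against, and your proposal has to be judged as an attack on an open problem.

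On those terms, your sketch circles the right ideas but has genuine gaps at both key steps. First, the geometric construction of $\Phi$ is dimensionally wrong as stated: for a Lagrangian $L \subset \Xv$ of dimension $n-1$ (with $\dim_{\bC} \Yv = n$), the full circle-fiber preimage of $L$ in $\partial_\infty \Vv$ has dimension $n$, which is the dimension of a Lagrangian in $\Vv$, not of a Legendrian in the $(2n-1)$-dimensional contact boundary, so its Liouville cone is not Lagrangian. What you need is a Legendrian \emph{section} of the circle bundle over $L$ (requiring $L$ exact and a choice of primitive), and even then the cone is singular at the tip; the honest objects are linking disks in a \emph{stopped} category, where the stop-removal formalism you invoke computes the quotient of $\cW$ by the linking disks themselves --- a different quotient from $\cW\lb \Vv \rb / \cF\lb \Vv \rb$. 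Second, and more fundamentally, your full-faithfulness step assumes the conclusion: the quotient $\cW\lb \Vv \rb / \cF\lb \Vv \rb$ is a Rabinowitz-type category of the circle bundle $\partial_\infty \Vv \to \Xv$, and the higher-winding Reeb sectors do not visibly die there --- they are exactly the terms that must reassemble (via the Calabi--Yau identification $N_{\Xv/\Yv} \cong K_\Xv^\dual$ and holomorphic curves meeting $\Xv$) into the $A_\infty$-structure of $\cF(\Xv)$. Producing the "mechanism that kills the higher-winding sectors," or rather that reorganizes them correctly, is not a technical detail to be filled in: it is the entire content of the conjecture.
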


\pref{cj:W/F} reduces
homological mirror symmetry for the manifold $\Xv$ of general type
to that for the affine manifold $\Vv$.
If $\dv_0 = 1$,
then $\Vv_{\wv}$ admits a compactification
to a Calabi--Yau orbifold $\Yv_{\wv}$
such that
$
 \Xv_{\wv} \coloneqq \Yv_{\wv} \setminus \Vv_{\wv}
$
is a smooth ample divisor,
and \pref{cj:affine_HMS} together with \pref{cj:W/F} implies
\begin{align}
 D^b_\sing \lb \eZ_\w \rb \simeq \cF \lb \Xv_{\wv} \rb.
\end{align}

Recall that
the \emph{degree $d$ trivial extension algebra}
(also known as the \emph{Frobenius completion of degree $d$})
of a finite-dimensional $\bfk$-algebra $A^0$
has $A^0 \oplus \Hom_\bfk(A^0,\bfk)[-d]$
as the underlying graded vector space,
and the multiplication is given by
\begin{align} \label{eq:trivial extension}
(a,f)\cdot (b,g) = (ab, ag + fb).
\end{align}

\begin{thm} \label{th:main}
Let $\w \in \bfk[x_1,\ldots,x_n]$ be a weighted homogeneous polynomial
and $\Gamma$ be a subgroup of $\Gamma_\w$
containing $\phi(\bGm)$ as a subgroup of finite index.
Assume that
\begin{enumerate}
 \item
$\w$ has an isolated critical point at the origin,
 \item
$d_0$ defined by \pref{eq:degree_condition}
is positive,
 \item
$\emf{\bA^n}{\Gamma}{\w}$ has a tilting object $E$, and
 \item
the pair $(\w, \Gamma)$ does not have twisted deformations
in the sense of \pref{df:cond_U}.
\end{enumerate}
Let $A^0$ be the endomorphism algebra
of the tilting object $E$
and $A$ be the degree $n-1$ trivial extension algebra of $A^0$.
Then there is a $\Gm$-equivariant isomorphism
\begin{align} \label{eq:main}
 U \simto \cU_{\infty}(A) 
\end{align}
of affine schemes from
the affine subspace $U$
of the base space $\Utilde$
of the semiuniversal unfolding of $\w$
defined in \pref{sc:whsing}
to the moduli space of $A_\infty$-structures on $A$
sending the origin $0 \in U$
to the formal $A_\infty$-structure on $A$.
\end{thm}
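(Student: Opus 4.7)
The plan is to mimic the two-part strategy used in the proof of \pref{th:cuspthm}: match tangent spaces at the $\Gm$-fixed point via a Hochschild cohomology computation, establish formality of the $A_\infty$-algebra at the central fiber, and then promote the resulting infinitesimal isomorphism to a global one by exploiting $\Gm$-equivariance.

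First I would construct the classifying map $U \to \cU_\infty(A)$. The semiuniversal unfolding $\wtilde$ defines a family of $\Gamma$-equivariant superpotentials over $U$, and the tilting object $E$ of hypothesis (3) deforms (uniquely, because $\HH^1(A)_{<0}$ will vanish) to a relative tilting object $\widetilde{E}$ of the family. The dg endomorphism algebra $\dgend(\widetilde{E})$ then provides a family of minimal $A_\infty$-structures over $U$ whose common cohomology algebra is $A$; the identification of this cohomology algebra with the degree $n-1$ trivial extension of $A^0$ is forced by the fact that $\mf(\bA^n,\w,\Gamma)$ carries a Serre functor which, after unfolding, is $[n-1]$ (this is where hypothesis (2), i.e.\ $d_0>0$, enters, fixing the relevant Calabi--Yau shift). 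This yields the desired $\Gm$-equivariant morphism as in \pref{eq:LP2}.

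Next I would carry out the Hochschild cohomology computation, which I expect to be the main technical obstacle. The tilting hypothesis gives $\HH^*(A^0) \cong \HH^*(\mf(\bA^n,\w,\Gamma))$, and the latter can in turn be described in equivariant Jacobian-ring terms; passing to the trivial extension $A$ via the standard long exact sequence, the key claims to check are that $\HH^1(A)_{<0}=0$, so that $\cU_\infty(A)$ is represented by an affine scheme of finite type via \cite[Corollary 3.2.5]{Pol}, and that $\HH^2(A)_{<0}$ is canonically identified with the cotangent space at the origin of the positive-weight part of $U$, i.e.\ the negatively-graded part of the Jacobian ring of $\w$. The no-twisted-deformations hypothesis (4) is used precisely to rule out spurious contributions to $\HH^2(A)_{<0}$ that do not come from the geometric unfolding, and the method of \pref{sc:sylvester} provides an effective tool for the calculation.

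I would then prove formality of the central-fiber $A_\infty$-algebra $\eA_0$ by the weight-degree matching argument used for the cuspidal curve: a $\Gm$-invariant affine cover of $\bA^n\setminus 0$ yields a chain-level $\Gm$-equivariant dg model of $\eA_0$, and a $\Gm$-equivariant homotopy transfer then produces a minimal model whose generators carry internal weights equal to their cohomological degrees, so that $\Gm$-equivariance of $\mu^d$ (of weight $d-2$ by \pref{eq:Gm-action_on_U}) forces $\mu^d=0$ for $d\neq 2$. To conclude, both $U$ and $\cU_\infty(A)$ are affine schemes of finite type carrying a $\Gm$-action with strictly positive weights on the tangent space at the distinguished fixed point (by hypothesis (2) and \pref{eq:Gm-action_on_U}), so their coordinate rings are generated in positive weights; combined with the tangent-space isomorphism from Step 2 and the equivariance of the map, this forces the morphism to be an isomorphism of affine schemes. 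The hard part throughout will be making precise the twisted-deformation hypothesis and using it to nail down $\HH^2(A)_{<0}$; the higher-dimensional formality argument and the final global-to-local comparison are direct generalizations of the cuspidal case, modulo the care needed to keep $\Gm$-equivariance through the homotopy transfer.
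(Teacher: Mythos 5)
Your overall architecture (construct the classifying map from $\dgend$ of the deformed tilting object, compute $\HH^{1}(A)_{<0}=0$ and $\HH^{2}(A)_{<0}$, prove formality of the central fiber by the weight--degree matching, and finish with a $\Gm$-equivariance argument) does track the paper's proof. But there is a genuine gap at the single most important step: you assert that $\HH^{2}(A)_{<0}$ is ``canonically identified'' with the tangent space of $U$ at the origin and then treat this as if it were the statement that the \emph{differential of the classifying map} $U \to \cU_\infty(A)$ is an isomorphism. The no-twisted-deformations hypothesis only gives the equality of dimensions $\dim \HH^{2}(A)_{<0} = \dim U$; it does not rule out that the Kodaira--Spencer-type map $T_0 U \to \HH^{2}(A)_{<0}$ has a kernel, i.e.\ that some first-order unfolding of $\w$ produces a gauge-trivial deformation of the $A_\infty$-structure on $A$. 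Nothing in your proposal addresses why distinct unfolding directions yield non-equivalent $A_\infty$-structures, and without injectivity of the tangent map the final graded-Nakayama/$\Gm$-weight argument does not get off the ground.

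The paper closes this gap with a reconstruction argument that is entirely absent from your proposal: it invokes Looijenga's result that $U$ is the \emph{fine} moduli space of $\Rbar$-polarized schemes (with a $G$-action in the case $\Gamma \supsetneq \phi(\bGm)$, and a $\bmu_{d_0}$-twist when $d_0 \neq 1$), and then shows that the family $\cY \to U$ of such polarized schemes can be recovered from the family $\eA$ of $A_\infty$-algebras --- by locating the objects $\cO_{\scrY}(i)$ inside the category generated by $\eS$, forming the $\bZ$-algebra $\lb \Hom^0(\cO_{\scrY}(i),\cO_{\scrY}(j)) \rb_{i,j}$, passing to $\Qgr$, and applying Gabriel--Rosenberg reconstruction. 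This produces a left inverse $\psi$ with $\psi \circ \varphi = \id_U$, which is what forces the tangent map to be injective (hence bijective by the dimension count $\dim U = \dim \HH^2(A)_{<0} \ge \dim \cU_\infty(A)$). You would need either this reconstruction step or some substitute for it --- for instance a direct computation showing the first-order deformation classes of $\eA$ along the coordinate directions of $U$ are linearly independent in $\HH^2(A)_{<0}$ --- before the concluding equivariance argument can be applied.
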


Although the existence of a tilting object
and the non-existence of twisted deformations
are restrictive assumptions
on a pair $(\w, \Gamma)$,
there are many interesting examples
where both of them holds.
\pref{cj:tilting} states that
the former holds when $\w$ is an invertible polynomial
and $\Gamma = \Gamma_\w$.
We will see examples
where the latter holds
in Sections \ref{sc:fermat}--\ref{sc:EUS}.


To apply \pref{th:main} to homological mirror symmetry,
one needs to find a generator of the Fukaya category
whose Yoneda algebra is isomorphic to $A$.
When $\w$ is a Sebastiani--Thom sum of polynomials of type A or D,
i.e.,
a decoupled sum of polynomials of the form $x^{n+1}$ or $x^2y + y^{n-1}$,
homological mirror symmetry for singularities
\cite{FU1,FU2}
gives
a collection $(S_i)_{i=1}^\mu$
of Lagrangian spheres in $\Vv_{\wv}$
such that the Yoneda algebra of their direct sum
$
 S = \bigoplus_{i=1}^\mu S_i
$
in the Fukaya category $\cF \lb \Vv_{\wv} \rb$
is isomorphic to the trivial extension algebra
of the tensor product
of the path algebras of the Dynkin quivers of the corresponding types.
For example,
the algebra $A^0$ in the case of $x^4+y^4+z^4$
is the path algebra of the quiver
in \pref{fg:ladder444},
with the relations that
the composition of arrows along the sides of each small square commutes.

%
%
%
%
%
%
%
%

\begin{figure}[h!]
\centering
\begin{tikzpicture}[scale=0.9]
\tikzset{vertex/.style = {style=circle,draw, fill,  minimum size = 2pt,inner sep=1pt}}
\tikzset{edge/.style = {->,>=stealth',shorten >=8pt, shorten <=8pt  }}

\tikzset{>=latex}

\def \dx{2};
\def \dy{2};
\def \dz{-1.5};
\def \nbx{3};
\def \nby{3};
\def \nbz{3};

\foreach \x in {1,...,\nbx} {
    \foreach \y in {1,...,\nby} {
        \foreach \z in {1,...,\nbz} {
            \node[vertex] at (\x*\dx,\y*\dy,\z*\dz)  {};
        }
    }
}

\foreach \x in {1,...,\nbx} {
    \foreach \z in {1,...,\nbz}{
        \foreach \y in {2,...,\nby}{
            \draw [edge](\x*\dx,\y*\dy - \dy,\z*\dz) -- ( \x*\dx , \y*\dy, \z*\dz);
        }
    }
}

\foreach \y in {1,...,\nbx} {
    \foreach \z in {1,...,\nbz}{
        \foreach \x in {2,...,\nbx}{
            \draw[edge](\x * \dx - \dx,\y*\dy,\z*\dz) -- ( \x * \dx,\y*\dy,\z*\dz);
        }
    }
}

\foreach \x in {1,...,\nbx} {
    \foreach \y in {1,...,\nbz}{
        \foreach \z in {2,...,\nby}{
            \draw[edge](\x*\dx,\y*\dy,\z*\dz - \dz) -- ( \x*\dx,\y*\dy,\z*\dz);
        }
    }
}

\end{tikzpicture}

\caption{A quiver for $\w = x^4+y^4+z^4$
}
\label{fg:ladder444}

\end{figure}

By combining
the proof of a special case of \cite[Conjecture 4]{seidelICM}
which states, under assumptions satisfied for $\Vv_{\wv}$,
an isomorphism
\begin{align}
 \SH^* \lb \Vv_{\wv} \rb \simeq \HH^* \lb \cF \lb \Vv_{\wv} \rb \rb
\end{align}
of the symplectic cohomology
and the Hochschild cohomology of the Fukaya category,
with the computation of the symplectic cohomology
$\SH^* \lb \Vv_{\wv} \rb$
using a spectral sequence,
originally due to McLean \cite{mcleantalk}
and full detail of which was written later
by Ganatra and Pomerleano \cite{1811.03609}
(who in addition proved that this spectral sequence is multiplicative),
we show that
the Yoneda $A_\infty$-algebra $\cA$
of the generator of the Fukaya category is not formal.
%
%
Hence $\cA$ can be identified with a point in the moduli space
\begin{align} \label{eq:moduli of non-formal A-infinity structures}
 \cM_\infty (A) \coloneqq \ld \lb \cU_\infty(A) \setminus \bszero \rb \middle/ \Gm \rd
\end{align}
of non-formal $A_\infty$-structures.
\pref{cj:affine_HMS} identifies exactly which point this is,
and in order to prove it,
one has to distinguish points on $\cM_\infty(A)$
by computable invariants of $\cF \lb \Vv_{\wv} \rb$.
For
$
 \w = x_1^{n+1}+x_2^{n+1}+\cdots+x_n^{n+1}
$
and
$
 \w = x_1^2+x_2^{2n}+\cdots+ x_n^{2n},
$
this space is one-dimensional,
and we can prove \pref{cj:affine_HMS}
by computing the dimensions of the Hochschild cohomologies
in this case:

\begin{thm} \label{th:HMS}
\begin{enumerate}[(i)]
 \item
Let
\begin{align}
 \Vv \coloneqq \lc (x_1,x_2,\ldots, x_n) \in \bC^n \relmid
  x_1^{n+1}+x_2^{n+1}+\cdots+x_n^{n+1} = 1 \rc
\end{align}
be the Milnor fiber considered as an exact symplectic manifold, and
\begin{align}
 K \coloneqq
  \lc \ld \diag(t_0,t_1,\ldots, t_n) \rd \in \PGL_{n+1}(\bC) \relmid
   t_1^{n+1} = \cdots = t_n^{n+1} = t_0 t_1\cdots t_n =1 \rc
\end{align}
be a finite group
acting on the projective hypersurface
\begin{align}
 Z \coloneqq \lc [x_0:x_1:\cdots:x_n] \in \P^{n} \relmid x_1^{n+1}+x_2^{n+1}+\cdots+x_n^{n+1}+x_0x_1\cdots x_n=0\rc.
\end{align}
Then we have quasi-equivalences
\begin{align} \label{eq:HMS_Fermat1}
 \cF \lb \Vv \rb \simeq \perf \ld Z/K \rd
\end{align}
and
\begin{align} \label{eq:HMS_Fermat2}
  \cW \lb \Vv \rb \simeq \coh \ld Z/K \rd
\end{align}
of pretriangulated $A_\infty$-categories over $\bC$. \\
 \item
Let
\begin{align}
 \Vv \coloneqq \lc(x_1,x_2,\ldots,x_n)\in \bC^n \relmid x_1^2+x_2^{2n}+\cdots+x_n^{2n} =1 \rc
\end{align}
be the Milnor fiber considered as an exact symplectic manifold, and
\begin{align}
 K \coloneqq \lc \ld \diag(t_0,\ldots, t_n) \rd \in \Aut \bP \relmid
  t_1^2 = t_2^{2n} = \cdots =  t_n^{2n} = t_0t_1\cdots t_n =1 \rc
\end{align}
be a finite group
acting on the weighted projective hypersurface
\begin{align}
 Z \coloneqq \lc \ld x_0:x_1:\cdots:x_n \rd \in \bP \relmid
  x_1^2+x_2^{2n}+\cdots+x_n^{2n}+x_0x_1\cdots x_n=0 \rc,
\end{align}
where $\bP = \bP(1,n,1,\ldots,1)$ is a weighted projective space
considered as a smooth stack.
Then we have quasi-equivalences
\begin{align}
 \cF \lb \Vv \rb \simeq \perf \ld Z/K \rd
\end{align}
and
\begin{align}
 \cW \lb \Vv \rb \simeq \coh \ld Z/K \rd
\end{align}
of pretriangulated $A_\infty$-categories over $\bC$.
\end{enumerate}
\end{thm}

\subsection{The relation with results of Seidel and Sheridan}

The large complex structure limits in \pref{th:HMS} are different
from those appearing in \cite{MR3364859}
and its generalizations
\cite{
MR3294958, sheridansmith}.
In his construction,
Seidel removes the divisor $\{x_1x_2x_3=0\}$
from the Milnor fiber $\Vv$ on the $A$-side and
considers the reducible singular variety $\{x_0x_1x_2x_3=0\}$
instead of $Z$ on the $B$-side (cf.~\cite[Section 5]{LPol3}).


The generator $S$ of $\cF \lb \Vv \rb$
used in the proof of \pref{th:HMS}.(i)
is the direct sum of vanishing cycles
of the Lefschetz fibration
$
\wv = x_1^{n+1}+\cdots+x_n^{n+1} \colon \bC^{n+1} \to \bC,
$
which is also an object of $\cF \lb \Yv \rb$.
The Yoneda algebra computed in $\cF\lb \Yv \rb$ is a deformation
\cite{seidelICM}
of the Yoneda algebra $A$ computed in $\cF(\Vv)$,
and hence isomorphic to it since $\HH^2(A)_0 \cong 0$,
so that the Yoneda $A_\infty$-algebra computed in $\cF\lb \Yv \rb$
is described by a Novikov ring-valued point of $\cM_\infty(A)$,
which is the open-string mirror map.

The generator used by Seidel in \cite{MR3364859}
is the direct sum of the vanishing cycles
of the Lefschetz fibration
$
\wv' \coloneqq (\wv+1)/(x_1 x_2 x_3) \colon (\bCx)^3 \to \bC
$
mirror to the toric variety
whose fan polytope is polar dual to that of $\bP^3$.
The generator used by Sheridan in \cite{MR3294958}
is the cover of an immersed Lagrangian sphere
in a pair of pants,
which is shown to be the direct sum of vanishing cycles
of the Lefschetz fibration
$
\wv' \coloneqq (\wv+1)/(x_1 \cdots x_n) \colon (\bCx)^n \to \bC
$
in \cite{MR2975297}.
One has
$
\Dv
\coloneqq
\Vv
\setminus
(\wv')^{-1}(0)
=
\Dv_1 \cup \cdots \cup \Dv_n
$
where
$
\Dv_i \coloneqq \Vv \cap \{ x_i = 0 \}.
$
Let
$\cF(\Vv, \Dv)$
be the relative Fukaya category,
which is an $A_\infty$-category
over $\bC[q_1,\ldots,q_n]$
whose objects are Lagrangian submanifolds
of $\Vv \setminus \Dv$
and compositions are counted
with intersection numbers with $\Dv_i$.
Since $\Vv$ is Stein,
the definition of $\cF(\Vv,\Dv)$
involves only the classical theory of pseudo-holomorphic maps,
and the coefficient ring is a polynomial ring.
The argument of Seidel
and Sheridan
shows that
the idempotent-complete pretriangulated $A_\infty$-category
generated by the full subcategory of
$
\cF(\Vv,\Dv)
$
consisting of the cover of the immersed Lagrangian sphere
is equivalent to
$
\perf \ld \mathcal{Z}/K \rd
$
where
$
\mathcal{Z}
\coloneqq
\Proj
\bC[q_1,\ldots,q_n][x_1,\ldots,x_n]
/
(q_1 x_1^{n+1} + \cdots + q_n x_n^{n+1} + x_0 \cdots x_n).
$
This suggests generalizations of \pref{cj:affine_HMS}
to more general partial compactifications
of covers of a pair of pants.

Even if one's goal is to prove homological mirror symmetry
for a compact Calabi--Yau manifold
over the Novikov field,
it is useful
not to go directly
from a cover of a pair of pants
to the compact Calabi--Yau,
but to divide it into two steps,
first to the Milnor fiber
and then to the compact Calabi--Yau:
The Fukaya category of a cover of a pair of pants
has many deformations,
but it is easy to control the deformation
to the Milnor fiber,
essentially because the Milnor fiber is Stein
and the deformation is locally constant
along a stratification of the base space.
Once one comes to the Milnor fiber,
and take the direct sum of vanishing cycles
as a generator,
then we can understand
not only formal deformations
but the global moduli space of $A_\infty$-structures.
It is an interesting problem
to obtain the same level of understanding
for deformations of the Fukaya category
of a cover of a pair of pants,
which would have non-smoothing components in general.

\subsection{Moduli of lattice polarized K3 surfaces}

Special cases of the moduli space
\pref{eq:moduli of non-formal A-infinity structures}
give modular compactifications
of moduli spaces
of a certain class of lattice polarized K3 surfaces.
The point is that
the choice of a generator $\eS$
and
an isomorphism
$\psi \colon \End \eS \simto A$
with a fixed graded algebra $A$
is a derived category analog
of a choice of a lattice polarization.
Similar identification
of a choice of a full strong exceptional collection
as an analog of a choice of a marking
(an isomorphism of the Picard lattice
with a fixed lattice)
of a del Pezzo surface
was a starting point of
\cite{1411.7770,1903.06457}.

Let $P$
be a \emph{lattice},
i.e.,
a free abelian group
equipped with a symmetric bilinear form.
A \emph{$P$-polarized K3 surface}
is a pair $(Y, j)$
of a K3 surface
and a primitive lattice embedding
$j \colon P \hookrightarrow \Pic Y$.
It follows from
the global Torelli theorem
and the surjectivity of the period map
that the coarse moduli space of
$P$-polarized K3 surfaces
is the quotient of a symmetric domain of type IV
by a discrete group.
As an example,
consider the case
$
P = E_8 \bot U.
$
This is the complement
of $U$
of the `half'
of the extended K3 lattice
$
E_8 \bot E_8 \bot U \bot U \bot U \bot U,
$
and as such is self-mirror,
since mirror symmetry for lattice polarized K3 surfaces
interchanges the algebraic lattice
and the transcendental lattice
inside the extended K3 lattice
\cite{MR1420220}.
The Satake--Baily--Borel compactification
of the coarse moduli space
of $E_8 \bot U$-polarized K3 surfaces
is known to be the 10-dimensional weighted projective space
$\mathbf{P}(\bsw)$
of weight
$\bsw = (4, 10, 12, 16, 18, 22, 24, 28, 30, 36, 42)$
\cite{MR0642697}.
Similar
descriptions exist
for lattices
coming from exceptional unimodal singularities
by \cite{MR761312},
which lead to a `striking'
(\cite[page 586]{MR2003125})
conclusion that certain rings of meromorphic
automorphic forms are polynomial rings.
\pref{th:main}
together with the discussion
in \pref{sc:EUS}
gives an interpretation
of the spectrum of all of these polynomial rings
as moduli spaces of $A_\infty$-structures.
This is a K3 analog of the description
of $\cMbar_{1,1}$
as moduli of $A_\infty$-structures
recalled in \pref{sc:moduli of elliptic curves}.
Similarly,
the coarse moduli space
of
\pref{eq:moduli of non-formal A-infinity structures}
for the $n=3$ case of
\pref{th:HMS}.(i)
can be identified
with the coarse moduli space of
$E_8 \bot E_8 \bot U \bot \la -4 \ra$-polarized
K3 surfaces.
This is a K3 analog
of the Hesse pencil of cubic curves,
which are elliptic curves with level 3 structures.
These examples are the first
of infinite series,
discussed in
\pref{sc:sylvester}
and \pref{sc:fermat}
respectively,
where \pref{th:main} applies.

\subsection{Sebastiani--Thom summation}

Yet another motivation
for \pref{cj:affine_HMS},
besides moduli of $A_\infty$-structures
and partial compactifications of covers of a pair of pants,
comes from a conjectural compatibility
of
\pref{cj:HMS_sing}
and
\pref{cj:affine_HMS}
under the Sebastiani--Thom summation.
Let
$
\wv_i \colon \bC^{n_i} \to \bC^1
$
for $i=1,2$
be Lefschetz fibrations
coming from transpositions of invertible polynomials
$
\w_i \colon X_i \coloneqq \bA^{n_i} \to \bA^1
$
and
\begin{align}
  Y_i \coloneqq \lc (x_{i,1},\ldots, x_{i,n_i}) \in \bA^{n_i}
   \relmid x_{i,1} \cdots x_{i,n_i} = 0 \rc
\end{align}
be the unions of coordinate hyperplanes.
We also write the union of coordinate hyperplanes in
$
X
\coloneqq X_1 \times X_2
$
as $Y$.
Let
$
\w = \w_1 + \w_2 \colon \bA^{n_1+n_2} \to \bA^1
$
be the Sebastiani--Thom sum
of $\w_1$ and $\w_2$,
and set
\begin{align}
\Gamma
\coloneqq
  \lc
  ((t_{1,0},\ldots,t_{1,n_1}),(t_{2,0},\ldots,t_{2,n_2}))
    \in \Gamma_1 \oplus \Gamma_2
  \relmid
    t_{1,0} = t_{2,0}
  \rc
\end{align}
where
$
\Gamma_i \coloneqq \Gamma_{\w_i}.
$
It follows from \cite{MR2437083} that
\begin{align}
\emf{\bA^{n_i+1}}{\w_i + x_{i,0} \cdots x_{i,n_i}}{\Gamma_i}
  \simeq \emf{Y_i}{\w_i}{\Gamma_i}
\end{align}
The push-out diagram
\begin{align} \label{eq:push-out for X and Y}
\begin{CD}
Y_1 \times Y_2 @>>> X_1 \times Y_2 \\
@VVV @VVV \\
Y_1 \times X_2 @>>> Y
\end{CD}
\end{align}
should induce the push-out diagram
\begin{align}
\begin{CD}
\emf{Y_1 \times Y_2}{\Gamma}{\w} @>>> \emf{Y_1 \times X_2}{\Gamma}{\w} \\
@VVV @VVV \\
\emf{X_1 \times Y_2}{\Gamma}{\w} @>>> \emf{Y}{\Gamma}{\w},
\end{CD}
\end{align}
which gives
\begin{align}
\begin{CD}
\emf{Y_1}{\Gamma_1}{\w_1} \otimes \emf{Y_2}{\Gamma_2}{\w_2} @>>> \emf{Y_1}{\Gamma_1}{\w_1} \otimes \emf{X_2}{\Gamma_2}{\w_2} \\
@VVV @VVV \\
\emf{X_1}{\Gamma_1}{\w_1} \otimes \emf{Y_2}{\Gamma_2}{\w_2}
@>>> \emf{Y}{\Gamma}{\w}
\end{CD}
\end{align}
by the Sebastiani--Thom theorem for matrix factorizations
\cite{1101.5834}.
This matches
the push-out diagram
\begin{align}
\begin{CD}
 \cW(\wv_1^{-1}(0)) \otimes \cW(\wv_2^{-1}(0)) @>>> \cW(\wv_1^{-1}(0)) \otimes \cW(\wv_2) \\
 @VVV @VVV \\
 \cW(\wv_1) \otimes \cW(\wv_2^{-1}(0)) @>>> \cW \lb (\wv_1+\wv_2)^{-1}(0) \rb
\end{CD}
\end{align}
coming from the cosheaf property
of the wrapped Fukaya categories \cite{1809.03427}.

\begin{remark}
Similar compatibility exists
for homological mirror symmetry
for toric Fano manifolds
and that for their toric boundaries
giving large complex structure limits
of their anti-canonical Calabi--Yau hypersurfaces.
If
$
\wv_i \colon (\bCx)^{n_i} \to \bC
$
for $i=1,2$
are mirror to toric Fano manifolds $X_i$
with toric boundaries $Y_i$
and
$
\wv \coloneqq \wv_1+\wv_2 \colon (\bCx)^{n_1+n_2} \to \bC
$
is mirror to
$
X \coloneqq X_1 \times X_2
$
with its toric boundary $Y$,
then one has the push-out diagram
\pref{eq:push-out for X and Y}
inducing
the push-out diagram
\begin{align}
  \begin{CD}
    \coh Y_1 \otimes \coh Y_2 @>>> \coh X_1 \otimes \coh Y_2 \\
    @VVV @VVV \\
    \coh Y_1 \otimes \coh X_2 @>>> \coh Y
  \end{CD}
\end{align}
obtained from \cite[Theorem 8.A.1.2]{MR3701353}
as explained in \cite[Section 1.1.2]{1707.02959}
(see also \cite[Section 2.4]{1604.00114}).
\end{remark}

\subsection{}

This paper is organized as follows:
In \pref{sc:whsing},
we set up basic notations for weighted homogeneous polynomials and their semiuniversal unfoldings.
In \pref{sc:HH_mf},
we compute Hochschild cohomologies of (not necessarily smooth) proper algebraic stacks
associated with weighted homogeneous polynomials
using matrix factorizations.
In \pref{sc:generator},
we give a generator $\eS$ of $\perf \scrY$,
and prove the formality of $\dgend \eS_0$.
We prove \pref{th:main} in \pref{sc:moduli_K3}.
In \pref{sc:HHSH},
we prove that $\HH^* \lb \cF \lb \Vv \rb \rb$ is isomorphic to the symplectic cohomology of $\Vv$.
In \pref{sc:SH},
we give computations of symplectic cohomology of $\Vv$ and
deduce the non-formality result in $\cF \lb \Vv \rb$.
\pref{th:HMS} is proved in \pref{sc:HMS}.

Through the rest of the paper,
we will work over an algebraically closed field $\bfk$
of characteristic 0.
The bounded derived category of coherent sheaves,
its full subcategory consisting of perfect complexes,
and the unbounded derived category of quasi-coherent sheaves
on an algebraic stack $\eY$,
considered as pretriangulated dg categories,
will be denoted by $\coh \eY$, $\perf \eY$, and $\Qcoh \eY$ respectively.
All Fukaya categories are completed with respect to cones and direct summands.

\textit{Acknowledgment}: We are grateful to the referee for their careful reading and corrections. In particular, Section 6 has been thoroughly revised to address referee's comments. Y.~L.~is partially supported by the Royal Society URF\textbackslash R\textbackslash180024. K.~U.~is partially supported by Grant-in-Aid for Scientific Research
(15KT0105, 16K13743, 16H03930).

\section{Weighted hypersurface singularities}
 \label{sc:whsing} 

%

A \emph{weight system} is a sequence
$(d_1, \ldots, d_n;h)$
of positive integers satisfying
\begin{align}
 h > \max \lc d_1,\ldots,d_n \rc.
\end{align}
We will always assume
\begin{align} \label{eq:reduced}
 \gcd(d_1, \ldots, d_n, h) = 1
\end{align}
in this paper.
Let $\w(x_1,\ldots,x_n) \in \bfk[x_1,\ldots,x_n]$ be a polynomial in $n$ variables,
which is weighted homogeneous of weight $(d_1, \ldots, d_n; h)$;
\begin{align}
 \w \lb t^{d_1} x_1, \ldots, t^{d_n} x_n \rb
  = t^h \w(x_1,\ldots,x_n), \quad
  t \in \Gm.
\end{align}
It is written as the sum of monomials
\begin{align}
 \w(x_1,\ldots, x_n)
  = \sum_{\bsi=(i_1,\ldots, i_n) \in I_\w}  c_{\bsi} x_1^{i_1} x_2^{i_2} \ldots x_n^{i_n},
 \quad c_{\bsi} \in \Gm,
\end{align}
where the index set $I_\w$ is a subset
of the set of non-negative integers satisfying
\begin{align}
 d_1 i_1 + d_2 i_2 + \cdots + d_n i_n = h.
\end{align}
We will always assume that $\w$ determines
the weight system satisfying \pref{eq:reduced} uniquely.

Let $\Gamma_\w$ be the commutative algebraic group defined by
\begin{align}
 \Gamma_\w \coloneqq
  \lc (t_1,\ldots, t_{n+1}) \in \Gm^{n+1}
   \relmid t_1^{i_1} t_2^{i_2}\cdots t_n^{i_n} = t_{n+1}  \text{\ for all\ } (i_1,\ldots, i_n) \in I_\w \rc.
\end{align}
The group
$
 \Gammahat_\w \coloneqq \Hom(\Gamma_\w, \Gm)
$
of characters of $\Gamma_\w$ is written as
\begin{align}
 \Gammahat_\w
  = \left. \bZ \chi_1 \oplus \cdots \oplus \bZ \chi_{n+1} \middle/ \lb i_1 \chi_1 + \cdots + i_n \chi_n - \chi_{n+1} \rb_{\bsi \in I_\w} \right.,
\end{align}
where $\chi_i \in \Gammahat_\w$ for $1 \le i \le n+1$ is defined by $(t_1, \ldots, t_{n+1}) \mapsto t_i$.
Since the composition $\Gamma_\w \hookrightarrow \Gm^n \times \Gm \to \Gm^n$
with the first projection is injective,
we will think of $\Gamma_\w$ as a subgroup of $\Gm^n$,
and set $\chi_\w \coloneqq \chi_{n+1}$.
The group $\Gamma_\w$ consists of diagonal transformations of $\bA^n$
which keeps $\w$ semi-invariant;
\begin{align}
 \w ( t \cdot (x_1,\ldots x_n) ) = \chi_\w(t) \w(x_1,\ldots, x_n),
  \quad t \in \Gamma_{\w}.
\end{align}
The injective homomorphism
\begin{align}
 \phi \colon \G_m &\to \Gamma_\w, \quad
 t \mapsto \lb t^{d_1},\ldots, t^{d_n} \rb
\end{align}
fits into the exact sequence
\begin{align}
 1 \to \Gm \xrightarrow{\phi} \Gamma_\w \to \ker \chi_\w / \langle j_\w \rangle \to 1,
\end{align}
where
$
 j_\w \coloneqq \lb e^{2\pi \sqrt{-1} d_1/h}, \ldots, e^{2 \pi \sqrt{-1} d_n/h} \rb
$
is the \emph{grading element}
generating the cyclic group
$
 \ker \chi_\w \cap \phi(\Gm)
$
of order $h$.

Let $\Gamma$ be a subgroup of $\Gamma_\w$
containing $\phi(\G_m)$ as a subgroup of finite index.
For such $\Gamma$,
the kernel of $\chi \coloneqq \chi_\w|_\Gamma$ is a finite group,
and such subgroups $\Gamma$ are in bijection
with finite subgroups of $\ker \chi_\w$
containing the grading element $j_\w$.

The group $\Gamma$ acts naturally on
the spectrum of
$
 \Rbar \coloneqq \bfk[x_1,\ldots,x_n]/(\w),
$
and
we write the quotient stack
of the complement of the origin $\bszero$ as
\begin{align} \label{eq:X}
 \eX \coloneqq \ld \lb \Spec \Rbar \setminus \bszero \rb \middle/ \Gamma \rd.
\end{align}
We let $\Gamma$ act on
$
 \bA^{n+1} \coloneqq \Spec \bfk[x_0,\ldots,x_n]
$
diagonally via $\chi_0 \oplus \cdots \oplus \chi_n$
where
\begin{align} \label{eq:CY_weight}
 \chi_0 \coloneqq \chi - \chi_1 - \cdots - \chi_n.
\end{align}
By abuse of notation,
we write the image of $\w$
by the inclusion
of $\bfk[x_1,\ldots,x_n]$ to $\bfk[x_0,\ldots,x_n]$
by the same symbol,
and set
$
 R \coloneqq \bfk[x_0,\ldots,x_n] / (\w).
$

If
\begin{align} \label{eq:degree_condition}
 d_0 \coloneqq h - d_1 - \cdots - d_n
\end{align}
is positive,
then
$
 \ld \lb \bA^{n+1} \setminus \bszero \rb \middle/ \Gamma \rd
$
is proper, and hence so is its closed substack
\begin{align}
 \eY_0 \coloneqq \ld \lb \Spec R \setminus \bszero \rb \middle/ \Gamma \rd.
\end{align}
Here, the subscript $``0"$ is placed
in anticipation of the deformation
that we will study later on.
It is a projective cone over $\eX$,
which is obtained from
$
 \eV_0 \coloneqq \ld \Spec \Rbar \middle/ \ker \chi_0 \rd
$
by adding $\eX$ at infinity.
The character of the $\Gamma$-action on the $x_0$ variable
in \pref{eq:CY_weight}
is chosen so that the dualizing sheaf of $\eY_0$ is trivial.

Assume that
$
 \w \colon \bA^n \to \bA
$
has an isolated critical point at the origin.
This is equivalent to the finiteness
of the dimension $\mu$,
called the \emph{Milnor number} of $\w$,
of the Jacobi algebra
\begin{align}
 \Jac_\w \coloneqq \bfk[x_1,\ldots,x_n]/(\partial_1 \w, \ldots, \partial_n \w).
\end{align}
Let $J_\w$ be the set
of exponents
of monomials representing a basis of $\Jac_\w$,
and
\begin{align}
 \wtilde \coloneqq \w(x_1,\ldots,x_n)
  + \sum_{\bsj = (j_1,\ldots, j_n) \in J_\w} u_{\bsj} x_1^{j_1} \ldots x_n^{j_n}
  \colon
   \bA^n \times \Utilde \to \bA^1
\end{align}
be a semiuniversal unfolding of $\w$.
The base space
$
 \Utilde \coloneqq \Spec \bfk[u_1,\ldots,u_\mu]
$
is an affine space of dimension $\mu$.
Let $U$ be the affine subspace of $\Utilde$
defined by the condition that $u_\bsj$ may be non-zero
only if there exists a positive integer $w_\bsj$
satisfying
\begin{align} \label{eq:equivunfold}
 \chi = w_\bsj \chi_0 + j_1 \chi_1 + \cdots + j_n \chi_n.
\end{align}
Let $J$ be the set of $\bsj \in J_\w$
satisfying this condition.
Then we have the family
\begin{align} \label{eq:scrY}
 \pi_\scrY \colon \scrY \coloneqq \ld \lb \bfW^{-1}(0) \setminus \lb \bszero \times U \rb \rb \middle/ \Gamma \rd \to U
\end{align}
of stacks over $U$ defined by
\begin{align} \label{eq:bfW}
 \bfW
  \coloneqq \w(x_1,\ldots,x_n) + \sum_{\bsj \in J} u_{\bsj} x_0^{w_\bsj}  x_1^{j_1} \ldots x_n^{j_n}
  \colon \bA^{n+1} \times U \to \bA^1,
\end{align}
whose fiber
over $u \in U$
will be denoted by
$
 \eY_u \coloneqq \pi^{-1}(u).
$
Here the action of $\Gamma$ on $\bA^{n+1} \times U$
in such a way that $\deg x_i = \chi_i$ for $i = 0, 1, \ldots, n$ and
$\deg u_\bsj = 0$ for all $\bsj \in J_{\w}$.
The divisor at infinity
defined by $x_0$
is isomorphic to $\eX \times U$.
The relative dualizing sheaf $\omega_{\scrY/U}$ is identified with
$
 \omega_{(\bfW^{-1}(0) \setminus (\bszero \times U))/U}
$
considered as a $\Gamma$-equivariant coherent sheaf,
which in turn is isomorphic to the restriction of
$
 \omega_{(\bA^{n+1} \times U)/U} (\chi)
$
to
$
 \bfW^{-1}(0) \setminus (\bszero \times U)
$
since $\bfW$ is a section of $\cO_{\bA^{n+1} \times U}$ of degree $\chi$.
This sheaf is $\Gamma$-equivariantly trivial,
and we fix its trivialization,
which is unique up to scaling if $d_0 > 0$.
In addition,
there is a $\bGm$-action on $\bA^{n+1} \times U$
given by
\begin{align} \label{eq:Gm-action_on_Y}
 \lb (x_0,x_1,\ldots,x_n),(u_\bsj)_{\bsj \in J} \rb
  \mapsto \lb (t^{-1} x_0,x_1,\ldots,x_n), \lb t^{w_{\bsj}} u_\bsj \rb_{\bsj \in J} \rb,
\end{align}
which induces actions on $\scrY$ and $U$
which makes $\pi_\scrY$ equivariant.

\begin{example}[tacnode] \label{tacnode}
When
$n = 2$
and
$\w = x^2 + y^4$,
one has
$(d_1, d_2; h) = (2,1;4)$
and
\begin{align}
 \Gamma_\w
  \coloneqq \lc (t_1, t_2) \in \Gm^2 \relmid t_1^2 = t_2^4 \rc
  \simto \Gm \times \bmu_2, \quad
 (t_1,t_2) \mapsto (t_2, t_1 t_2^{-2}). 
\end{align}
The image of the injective homomorphism
\begin{align}
 \phi \colon \Gm \to \Gamma_\w, \quad
  t \mapsto (t^2, t)
\end{align}
is an index 2 subgroup isomorphic to $\Gm$,
so that there are two choices of $\Gamma$.
By construction, we have the semi-invariance property
\begin{align}
 \w(t_1 x, t_2 y) = \chi(t_1,t_2) \w(x,y),
\end{align}
where
$
 \chi \colon \Gamma \to \Gm
$
is the character sending $(t_1,t_2)$ to $t_1^2 = t_2^4$. 
A semiuniversal unfolding of $\w$ is given by
\begin{align}
 \wtilde (x,y; u_2,u_3,u_4) = x^2 + y^4 + u_2 y^2  + u_3 y  + u_4,
\end{align}
and one has
\begin{align}
 \bfW(x,y,z; u_2, u_3, u_4) = x^2 + y^4 + u_2 y^2 z^2 + u_3 y z^3 + u_4 z^4
\end{align}
if $\Gamma = \phi(\Gm)$, and
\begin{align}
 \bfW(x,y,z; u_2, u_4) = x^2 + y^4 + u_2 y^2 z^2 + u_4 z^4.
\end{align}
if $\Gamma = \Gamma_\w$.
\end{example}

\begin{example}[$E_{12}$-singularity]
When $n = 3$
and
$
 \w(x,y,z) = x^2 + y^3 + z^7,
$
one has
$
 (d_1, d_2, d_3; h) = (21, 14, 6; 42),
$
$
 \Gamma_\w
  \cong \Gm,
$
$
 \Jac_\w = \bA [x,y,z] / (2x,3y^2,7y^6),
$
and
$
 \mu = 12.
$
One can take
\begin{align}
 J_\w = \lc (i,j,k) \in \bN^3 \relmid i = 0, \ j \le 1, \ k \le 5 \rc,
\end{align}
so that a semiuniversal unfolding
$
 \wtilde \colon \bA^3 \times \Utilde \to \bA^1
$
of $\w$ is given by
\begin{align}
 \wtilde = x^2 +y^3+z^7 +\sum_{\substack{j=0,1,\\k=0,1,2,3,4,5}} u_{jk} y^j z^k.
\end{align}
Since $\phi(\Gm) = \Gamma_\w$,
the choice of $\Gamma$ is unique in this case.
The integer
\begin{align}
 w_{jk} = 42-14j-6k
\end{align}
is positive unless $(j,k) = (1,5)$,
so that $U \subset \Utilde$ is the 11-dimensional subspace
defined by $u_{15} = 0$, and
$
 \bfW \colon \bA^4 \times U \to \bA^1
$
is given by
\begin{align}
 \bfW = x^2 +y^3+z^7 + \sum_{(j,k) \ne (1,5)} u_{jk} y^j z^k v^{w_{jk}}.
\end{align}
\end{example}

\section{Hochschild cohomology via matrix factorizations}
 \label{sc:HH_mf}

%
The Hochschild cohomology of a scheme $Y$
(or more generally a perfect derived stack \cite{MR2669705})
is defined as
\begin{align} \label{eq:HHY}
 \HH^*(Y) \coloneqq \Ext^*_{Y \times Y} \lb \cO_\Delta, \cO_\Delta \rb,
\end{align}
where
$
 \cO_\Delta \coloneqq \Delta_* \cO_Y
$
and
$
 \Delta \colon Y \to Y \times Y
$
is the diagonal embedding. 
The right hand side of \pref{eq:HHY} is isomorphic to the endomorphism
\begin{align} \label{eq:HHQcY}
 \HH^*(\Qcoh Y) \coloneqq \Hom^*_{\FunL(\Qcoh Y, \Qcoh Y)} \lb \id_{\Qcoh Y}, \id_{\Qcoh Y} \rb
\end{align}
of the identity in the $\infty$-category of colimit-preserving endofunctors of $\Qcoh Y$
\cite{MR2276263,MR2669705}.

When $Y$ is a smooth variety over $\bfk$
(see \cite{antieauvezzosi} for a partial extension to positive characteristics),
one can compute the Hochschild cohomology by appealing to Hochschild--Kostant--Rosenberg isomorphism
\begin{align}
 \HH^n (Y) \cong \bigoplus_{p+q=n} H^p \lb Y, \Lambda^q T_Y \rb.
\end{align}
However,
our main interest is in the case when $Y$ is a singular stack.
A generalization of the above decomposition to singular varieties
is given by Buchweitz--Flenner \cite{BuFl} which states
\begin{align}
 \HH^n(Y) \cong \bigoplus_{p+q=n} \Ext^p \lb \wedge^q \mathbb{L}_Y, \cO_Y \rb
\end{align}
where $\mathbb{L}_Y$ is the cotangent complex over $\bfk$ and
$\wedge^q$ is the derived exterior product.
However,
it is not always straightforward to compute with this,
even when $Y$ is a variety.
We will instead use another strategy which uses the function $\w$ more directly.

Let
$
 S \coloneqq \Sym V
$
be the symmetric algebra
over the vector space
$
 V \coloneqq \vspan \lc x_0, x_1, \ldots, x_n \rc
$
of dimension $n+1$, and
$
 \bA^{n+1} = \Spec S
$
be the affine space.
Let further $\Gamma$ be a finite extension of $\Gm$
acting linearly on $V$,
$\chi \in \Gammahat \coloneqq \Hom(\Gamma, \Gm)$ be a character of $\Gamma$,
and
$
 \bfW \in H^0 \lb \cO_{[\bA^{n+1}/\Gamma]}(\chi) \rb
  \cong (S \otimes \chi)^\Gamma
$
be a non-zero element of weight $\chi$.
The quotient ring $R \coloneqq S/(\bfW)$ inherits a $\Gamma$-action.

When $\chi$ is isomorphic to the top exterior power of the dual $V^\dual$
as a $\Gamma$-module,
the bounded derived category $\coh \eY$
of coherent sheaves on the quotient stack
$
 \eY \coloneqq \ld \lb \Spec R \setminus \bszero \rb \middle/ \Gamma \rd
$
is quasi-equivalent to the idempotent-complete dg category
$
 \emf{\bA^{n+1}}{\Gamma}{\bfW}
$
of $\Gamma$-equivariant matrix factorizations;
\begin{align} \label{eq:Orlov}
 \coh \eY \cong \emf{\bA^{n+1}}{\Gamma}{\bfW}.
\end{align}
This is first proved by Orlov
\cite[Theorem 40]{MR2641200}
when $\Gamma \cong \Gm$
in the context of triangulated categories. The generalization to a finite extension of $\Gm$ is straightforward.
The quasi-equivalence of dg categories can be found in \cite{BFK,MR3084707,isik,shipman}.
Note also that by \cite[Theorem 39] {MR2641200},
$\emf{\bA^{n+1}}{\Gamma}{\bfW}$
is equivalent to the bounded stable derived category of the graded ring $R$, denoted by  $D^b_{\sing} (\gr R)$.
The equivalence \pref{eq:Orlov} implies the isomorphism
\begin{align}
 \HH^*(\eY) \cong \HH^*(\bA^{n+1}, \Gamma, \bfW),
\end{align}
where the right hand side is the Hochschild cohomology of the dg category
$
 \emf{\bA^{n+1}}{\Gamma}{\bfW},
$
which can be computed as follows:

\begin{thm}[\cite{MR2824483,MR3084707,MR3108698,BFK}] \label{HHcomp}
Let $\Gamma$ be an abelian finite extension of $\Gm$
acting linearly on $\bA^{n+1} = \Spec S$, and
$\bfW \in S$ be a non-zero element of degree
$
 \chi \in \Gammahat \coloneqq \Hom(\Gamma, \Gm).
$
Assume that the singular locus of the zero set
$Z_{(-\bfW) \boxplus \bfW}$ of the Sebastiani--Thom sum $(-\bfW) \boxplus \bfW$
is contained in the product of the zero sets $Z_\bfW \times Z_\bfW$.
Then
$
 \HH^t \lb \bA^{n+1}, \Gamma, \bfW \rb
$
is isomorphic to 
\begin{multline} \label{eq:HHmf}
 \lb \bigoplus_{\substack{\gamma \in \ker \chi, \ l \geq 0 \\ t - \dim N_\gamma = 2u }}
  H^{-2l}(d \bfW_\gamma) \otimes \chi^{\otimes (u+l)} \otimes \Lambda^{\dim N_\gamma} N_\gamma^\dual \right. \\
  \left. \oplus \bigoplus_{\substack{\gamma \in \ker \chi, \ l \geq 0 \\ t - \dim N_\gamma = 2u+1}}
  H^{-2l-1}(d \bfW_\gamma) \otimes \chi^{\otimes (u+l+1)} \otimes \Lambda^{\dim N_\gamma} N_\gamma^\vee \rb^\Gamma.
\end{multline}
\end{thm}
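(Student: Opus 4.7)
The plan is to compute $\HH^*(\bA^{n+1},\Gamma,\bfW)$ as the derived endomorphisms of the diagonal bimodule in an appropriate category of matrix factorizations, and then to use an equivariant Hochschild--Kostant--Rosenberg type isomorphism, localized via the assumption on the critical locus.

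First, I would set up the bimodule picture. Tensor product over $\bfk$ gives an equivalence between the dg category of $\mf(\bA^{n+1},\Gamma,\bfW)$-bimodules and $\mf(\bA^{n+1}\times\bA^{n+1},\Gamma\times\Gamma,(-\bfW)\boxplus\bfW)$ pulled back along the diagonal subgroup. The identity bimodule corresponds to the \emph{diagonal matrix factorization} $\Delta_\bfW$, namely the Koszul matrix factorization associated with the regular sequence $(x_i-y_i)_{i=0}^n$ and the decomposition $(-\bfW)\boxplus\bfW=\sum_i(x_i-y_i)h_i(x,y)$ coming from Taylor expansion. Under this identification, $\HH^*(\bA^{n+1},\Gamma,\bfW)\simeq \Hom^*(\Delta_\bfW,\Delta_\bfW)$.

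Next, the hypothesis on $Z_{(-\bfW)\boxplus\bfW}$ ensures that the critical locus of the superpotential on $\bA^{n+1}\times\bA^{n+1}$ is supported on the diagonal of the zero locus. This lets one replace $\Hom(\Delta_\bfW,\Delta_\bfW)$ by a local computation near the critical locus, using the standard equivalence between matrix factorizations and the singularity category and the fact that, locally analytically, Koszul resolutions become quasi-isomorphisms on the critical locus. After this reduction, the computation decomposes according to the components of the inertia stack $I[\pt/\Gamma]$ parametrized by $\gamma\in\Gamma$. Since the bimodule $\Delta_\bfW$ is supported on the diagonal, only $\gamma$ acting trivially on $\bfW$, i.e.\ $\gamma\in\ker\chi$, contribute: indeed for such $\gamma$ one has a well-defined restricted superpotential $\bfW_\gamma$ on the fixed locus $(\bA^{n+1})^\gamma$ with a critical point at the origin. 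On each such component, a local Koszul model for $\Delta_{\bfW_\gamma}$ expresses the endomorphisms as the Koszul cohomology $H^*(d\bfW_\gamma)$ (the derived critical locus of $\bfW_\gamma$) tensored with the determinant line $\Lambda^{\dim N_\gamma}N_\gamma^\dual$ of the conormal bundle to the fixed locus, the twist arising from the standard sign pairing in the Koszul complex of the transverse directions.

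The remaining work is bookkeeping of $\Gamma$-weights. The matrix factorization differential has degree $\chi$, so each homological shift of $2$ in the Koszul cohomology is accompanied by a twist by $\chi$; this is the origin of the $\chi^{\otimes(u+l)}$ and $\chi^{\otimes(u+l+1)}$ factors, depending on the parity of $t-\dim N_\gamma$. Combining these local contributions over all $\gamma\in\ker\chi$ and taking $\Gamma$-invariants (to descend from the $\Gamma\times\Gamma$-equivariant picture on $\bA^{n+1}\times\bA^{n+1}$ to the bimodule category for $\mf(\bA^{n+1},\Gamma,\bfW)$) produces the displayed formula~\eqref{eq:HHmf}. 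The main obstacle, and what the cited works carry out carefully, is precisely this sign- and weight-tracking: organizing the Koszul model of $\Delta_\bfW$ so that the determinant twist $\Lambda^{\dim N_\gamma}N_\gamma^\dual$ and the shifts by powers of $\chi$ appear with the correct $\Gamma$-equivariant structure, and justifying that the global computation is indeed a direct sum of such local contributions under the stated hypothesis on the critical locus.
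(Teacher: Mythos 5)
Your outline is correct and follows essentially the same route as the paper, which does not reprove this statement but explicitly defers to the kernel/diagonal-matrix-factorization computation of Ballard--Favero--Katzarkov (Theorem 1.2 of \cite{BFK}), noting only a change of convention for the Koszul complex so that the formula becomes equivariant for the extra $\Gm$-action. The steps you describe --- identifying $\HH^*$ with endomorphisms of the diagonal Koszul factorization of $(-\bfW)\boxplus\bfW$, localizing via the hypothesis on its singular locus, decomposing over $\gamma\in\ker\chi$, and tracking the $\Lambda^{\dim N_\gamma}N_\gamma^\dual$ and $\chi$-power twists --- are precisely the content of that cited argument.
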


Here $H^i(d \bfW_\gamma)$ is the $i$-th cohomology of the Koszul complex
\begin{align} \label{eq:Koszul}
 C^*(d \bfW_\gamma) \coloneqq \lc
 \cdots
 \to \Lambda^2 V_\gamma^\dual \otimes \chi^{\otimes (-2)} \otimes S_\gamma
  \to V_\gamma^\dual \otimes \chi^\dual \otimes S_\gamma
  \to S_\gamma \rc,
\end{align}
where the rightmost term $S_\gamma$ sits in cohomological degree 0, and
the differential is the contraction with
\begin{align} \label{eq:bfW_gamma}
 d \bfW_\gamma \in \lb V_\gamma \otimes \chi \otimes S_\gamma \rb^\Gamma.
\end{align}
The vector space $V_\gamma$ is the subspace of $\gamma$-invariant elements in $V$,
$S_\gamma$ is the symmetric algebra of $V_\gamma$,
$\bfW_\gamma$ is the restriction of $\bfW$ to $\Spec S_\gamma$, and
$N_\gamma$ is the complement of $V_\gamma$ in $V$
so that
$V \cong V_\gamma \oplus N_\gamma$
as a $\Gamma$-module.
The vector space on the right hand side of \pref{eq:bfW_gamma}
is the degree $\chi$ part of the space
$
\Omega_{S_\gamma} \cong V_\gamma \otimes S_\gamma
$
of K\"ahler differentials of $S_\gamma$,
and $d \bfW_\gamma$ is
the exterior derivative
of the polynomial $\bfW_\gamma$.
The zero-th cohomology of the Koszul complex \pref{eq:Koszul}
is isomorphic to the Jacobi algebra $\Jac_{\bfW_\gamma}$.
If $\bfW_\gamma$ has an isolated critical point at the origin,
then the cohomology of \pref{eq:Koszul}
is concentrated in degree 0,
so that only the summand
\begin{align} \label{eq:l=0}
 \lb \Jac_{\bfW_\gamma} \otimes \chi^{\otimes u} \otimes \Lambda^{\dim N_\gamma} N_\gamma^\dual \rb^\Gamma
\end{align}
with $l = 0$ contributes in \pref{eq:HHmf}.

The formula \pref{eq:HHmf} is an adaptation of \cite[Theorem 1.2]{BFK},
to which we refer the reader for a proof.
The slight difference between \cite[Theorem 1.2]{BFK} and \pref{eq:HHmf}
comes from the convention for the Koszul complex;
the latter
is convenient in that
when $V$ has an additional $\Gm$-action,
\pref{eq:HHmf} is equivariant with respect to it.

If the $\Gamma$-action on $V$ satisfies
$
 \dim \lb S \otimes \rho \rb^\Gamma
  < \infty
$
for any $\rho \in \Gammahat$,
then one has
\begin{align} \label{eq:HHt_finite}
 \dim \HH^t(\bA^{n+1}, \bfW, \Gamma) < \infty
\end{align}
for any $t \in \bZ$,
since the Koszul complex \pref{eq:Koszul} is bounded,
the group $\ker \chi$ is finite,
each direct summand in \eqref{eq:HHmf}
is finite-dimensional,
and there are only finitely many $u$
contributing to a fixed $t$.

We compute the Hochschild cohomologies
in several classes of examples
in Sections \ref{sc:cones}--\ref{sc:odp} below.
The results in Sections \ref{sc:fermat}--\ref{sc:EUS}
give examples
where the condition (4) in \pref{th:main} is satisfied,
and the results in \ref{sc:cusp} and \ref{sc:odp} are used
in the proof of \pref{th:HMS}
given in \pref{sc:HMS}.

\subsection{Cones over isolated hypersurface singularities}
\label{sc:cones}

Let $\w \in \bfk[x_1,\ldots,x_n]$ be a weighted homogeneous polynomial
of weight $(d_1,\ldots,d_n;h)$
satisfying $d_0 > 0$ and $\Gamma$ be a subgroup of $\Gamma_\w$
containing $\phi(\Gm)$ as a subgroup of finite index
as in \pref{sc:whsing}.
Assume that $\w$ has an isolated critical point at the origin and
let $\bfW$ be the image of $\w$
by the inclusion $\bfk[x_1,\ldots,x_n] \hookrightarrow \bfk[x_0,\ldots,x_n]$.
Then
$
 \eY \coloneqq \ld \lb \bfW^{-1}(0) \setminus \bszero \rb \middle/ \Gamma \rd
$
has a $\Gm$-action given by
$
 t \cdot [x_0:x_1:\cdots:x_n] = [t x_0:x_1: \cdots:x_n],
$
which induces a $\Gm$-action on
$
 \HH^*(\eY).
$
Let
$
 \HH^*(\eY)_{<0}
$
be the negative weight part of this $\Gm$-action.

Since $\bfW$ does not contain the variable $x_0$,
the Koszul complex
$C^*(d \bfW_\gamma)$
is isomorphic to the tensor product
of $C^*(d \w_\gamma)$
and the complex
$
 \lc \bfk x_0^\dual \otimes \chi^\dual \otimes \bfk[x_0] \to \bfk[x_0] \rc
$
concentrated in cohomological degree $[-1,0]$
with the zero differential
if $V_\gamma$ contains $\bfk x_0 \subset V$,
and
to $C^*(d \w_\gamma)$ otherwise.
Only direct summands coming from
$H^k(d \bfW_\gamma)$
with $k = 0, -1$
contribute to \pref{eq:HHmf}
in the former case,
and those with $k=0$ in the latter case.
Summands with $k = 0$
contribute
\begin{align} \label{eq:HH0_0}
 \lb \Jac_{\w_\gamma} 
  \otimes \bfk[x_0]
  \otimes \chi^{\otimes u}
  \otimes \Lambda^{\dim N_\gamma} N_\gamma^\dual \rb^\Gamma
\end{align}
to $\HH^{2u+\dim N_\gamma}(\eY)$,
and those with $k=-1$ contribute
\begin{align} \label{eq:HH0_1}
 \lb \bfk x_0^\dual \otimes \Jac_{\w_\gamma}
  \otimes \bfk[x_0]
  \otimes \chi^{\otimes u}
  \otimes \Lambda^{\dim N_\gamma} N_\gamma^\dual \rb^\Gamma
\end{align}
to $\HH^{2u+\dim N_\gamma + 1}(\eY)$
since
\begin{align}
 H^{-1}(d \bfW_\gamma) \cong
  \bfk x_0^\dual \otimes \chi^\dual \otimes \Jac_{\w_\gamma} \otimes \bfk[x_0].
\end{align}

\begin{cor} \label{cr:HH0}
Under the above assumptions,
one has
$
 \HH^0(\eY) \cong \bfk,
$
$
 \HH^1(\eY)_0 \not \cong 0,
$
and
$
 \HH^1(\eY)_{<0} \cong 0.
$
\end{cor}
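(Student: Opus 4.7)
The plan is to apply the matrix factorization formula \eqref{eq:HHmf} together with the Koszul decompositions \eqref{eq:HH0_0} and \eqref{eq:HH0_1}, which exploit the independence of $\bfW$ from $x_0$. The resulting decomposition of $\HH^*(\eY)$ is indexed by $\gamma \in \ker \chi$, and on each summand I must keep track of two gradings simultaneously: the $\Gamma$-character (used to take invariants) and the $\Gm$-scaling weight of $x_0$. The $\Gm$-weight of a summand equals the power of $x_0$ appearing in the $\bfk[x_0]$ slot, minus the number of $x_0^\dual$ factors coming from the prefactor in \eqref{eq:HH0_1} or from $\Lambda^{\dim N_\gamma} N_\gamma^\dual$ in the case $x_0 \in N_\gamma$.

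For $\HH^0(\eY) \cong \bfk$, I would first observe that $\eY$ is connected and proper, the latter because $d_0 > 0$ makes the quotient $[(\Spec R \setminus \bszero)/\Gamma]$ projective; hence $\HH^0(\eY) = H^0(\eY, \cO_\eY) = \bfk$. This matches the formula side: the only summand reaching cohomological degree $0$ is $(\Jac_\w \otimes \bfk[x_0])^\Gamma$ coming from $\gamma = 1$, $u = 0$ of \eqref{eq:HH0_0}, and $\phi(\Gm)$-invariance together with the positivity of the weights $d_0, d_1, \ldots, d_n$ forces the only invariant to be the constant $1$.

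For $\HH^1(\eY)_0 \not\cong 0$, I would exhibit an explicit nonzero class. Take the $\gamma = 1$, $u = 0$ summand of \eqref{eq:HH0_1}, which contributes $(\bfk x_0^\dual \otimes \Jac_\w \otimes \bfk[x_0])^\Gamma$ to $\HH^1(\eY)$. The element $x_0^\dual \otimes 1 \otimes x_0$ is $\Gamma$-invariant since the characters of $x_0^\dual$ and $x_0$ cancel and $1 \in \Jac_\w$ is trivial, and its $\Gm$-weight is $-1 + 1 = 0$. Geometrically this is the Euler derivation generating the $\Gm$-action on $\eY$, and it is patently nonzero in $\HH^1(\eY)$.

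The main obstacle is the vanishing $\HH^1(\eY)_{<0} \cong 0$, for which I must sweep through all summands with strictly negative $\Gm$-weight and verify that each has no $\Gamma$-invariants. Contributions from \eqref{eq:HH0_0} have $\Gm$-weight $k \geq 0$ when $x_0 \in V_\gamma$, and $k - 1$ otherwise; in the latter case $\dim N_\gamma = 1$ is excluded because $\gamma \in \ker \chi$ together with $\chi = \chi_0 + \chi_1 + \cdots + \chi_n$ would force $\chi_0(\gamma) = 1$ whenever $\chi_i(\gamma) = 1$ for all $i \geq 1$. Contributions from \eqref{eq:HH0_1} have $\Gm$-weight $k - 1$, so negative weight requires $k = 0$, leaving $(\bfk x_0^\dual \otimes \Jac_{\w_\gamma} \otimes \chi^{\otimes u} \otimes \Lambda^{\dim N_\gamma} N_\gamma^\dual)^\Gamma$ with $2u + \dim N_\gamma + 1 = 1$ as the last case to be ruled out. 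A character-counting argument, exploiting that $\Jac_{\w_\gamma}$ is supported in non-negative $\phi(\Gm)$-weights and that the full $\Gamma$-action on $\Lambda^{\dim N_\gamma} N_\gamma^\dual$ and $\chi^{\otimes u}$ yields nontrivial characters in each remaining case, should kill every such invariant subspace. This case-by-case combinatorial bookkeeping is where the actual work of the proof lies.
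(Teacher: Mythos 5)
Your overall strategy --- running through the summands \eqref{eq:HH0_0} and \eqref{eq:HH0_1} of the matrix-factorization formula, extracting $\HH^0$ from the $\gamma=e$, $u=0$ piece, and exhibiting the Euler class $x_0^\dual\otimes x_0$ in $\HH^1(\eY)_0$ --- is exactly the paper's, and those two parts are fine. The genuine gap is in the third part, and it comes from your sign convention for the $\Gm$-weight. You declare the weight of a summand to be $+k$ (the power of $x_0$) minus the number of $x_0^\dual$ factors; the paper's grading is the opposite: a class represented by $x_0^k(\cdots)$ has weight $-k$ and each $x_0^\dual$ contributes $+1$. This is not a matter of taste here --- it is forced by the rest of the paper: the unfolding directions $x_0^{w_\bsj}x_1^{j_1}\cdots x_n^{j_n}$ with $w_\bsj>0$ must land in $\HH^2(\eY)_{<0}$ (so that $\dim\HH^2(\eY)_{<0}=\dim U$ and the statement is consistent with $\HH^2(A)_{<0}=\bfk(4)\oplus\bfk(6)$ for the cusp). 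With your convention the statement $\HH^1(\eY)_{<0}\cong 0$ becomes \emph{false} in general: the classes $x_0^\dual\otimes m$ with $\deg m=d_0$ and character $\chi_0$ (geometrically, the vector fields $m\,\partial_{x_0}$, e.g.\ $x_i\partial_{x_0}$ for the Fermat polynomial with $\Gamma=\phi(\Gm)$) and, for $n=2$, the twisted-sector classes in $\HH^{n-1}=\HH^1$ are genuinely nonzero; they simply sit in the \emph{positive}-weight part. Consequently the ``character-counting argument'' you defer to at the end --- which is where you yourself locate the actual work --- cannot succeed, because the invariant subspaces you are trying to kill are not zero.

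With the correct sign the vanishing is in fact the easy step: the only contribution to $\HH^1$ from \eqref{eq:HH0_1} with $\gamma=e$, $u=0$ consists of elements $x_0^\dual\otimes m\otimes x_0^k$ with $\deg m+kd_0=d_0$, which forces $k\le 1$ and hence weight $1-k\ge 0$; everything else in \eqref{eq:HH0_1} has $u\ge -1$ with $u=-1$ only for $N_\gamma=\vspan\{x_1,\ldots,x_n\}$ (landing in $\HH^{n-1}$ with weight $+1$), and \eqref{eq:HH0_0} cannot reach $\HH^1$ since $\dim N_\gamma=1$ is impossible. A second, smaller omission: in the case $x_0\in N_\gamma$ you only exclude $\dim N_\gamma=1$, but a contribution to $\HH^1$ also arises from $\dim N_\gamma=n+1$, $u=-1$ (i.e.\ $N_\gamma=V$) when $n=2$; this class is again of weight $+1$ and harmless under the correct convention, but your casework does not see it.
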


\begin{proof}
If $u \le -1$, then \pref{eq:HH0_0} vanishes,
and if $u = 0$,
then \pref{eq:HH0_0} contribute to $\HH^0(\eY)$
only if $N_\gamma = 0$,
where it is $\bfk$.
\pref{eq:HH0_0} cannot contribute to $\HH^1(\eY)$,
since $\dim N_\gamma=1$ is impossible
for $\gamma = (t_0,t_1,\ldots,t_n) \in \Gamma$
because of the condition
$
 t_0 \cdots t_n=1.
$
One always has $u \ge -1$ in \pref{eq:HH0_1},
and one can have $u = -1$
only if $N_\gamma = \vspan \{ x_1,\ldots,x_n \}$.
Each such $\gamma$ contribute $\bfk(-1)$ to $\HH^{n-1}(\eY)$.
The summand with $u = 0$ and $\gamma = 0$ contributes
$
 \lb \bfk x_0^\dual \otimes \Jac_\w \otimes \bfk[x_0] \rb^\Gamma
$
to $\HH^1(\eY)$,
which has non-negative $\Gm$-weights.
In particular,
the element $x_0^\dual \otimes x_0$
gives a non-zero contribution to $\HH^1(\eY)_0$.
Summands with $u = 0$ and $\gamma \ne 0$ or $u \ge 1$
contribute to $\HH^{\ge 2}(\eY)$.
\end{proof}

\begin{definition} \label{df:cond_U}
We say that the pair $(\w, \Gamma)$ \emph{does not have twisted deformations}
if $\HH^2(\eY)_{<0}$ comes only from the direct summand
$\lb \Jac_\w \otimes \bfk[x_0] \otimes \chi \rb^\Gamma$
corresponding to $u = 1$ and $\gamma = 0$ in \pref{eq:HH0_0}.
\end{definition}

This condition means that direct summands with $\gamma \ne 0$,
called \emph{twisted sectors} in string theory,
do not contribute to $\HH^2(\eY)_{<0}$,
so that all deformations
corresponding to $\HH^2(\eY)_{<0}$
comes from deformations of the defining polynomial $\w$,
and one has
$
 \dim \HH^2(\eY)_{<0} = \dim U.
$

\subsection{Projective hypersurfaces}
 \label{sc:fermat}

Consider the case
\begin{align}
  \w(x_1,\ldots,x_n)
  = x_1^{n+1}+\cdots+x_n^{n+1} 
\end{align}
with
\begin{align} \label{eq:weight_fermat_mirror}
 (d_1,\ldots,d_n;h)=(1,\ldots,1;n+1)
\end{align}
and
\begin{align} \label{eq:Gamma_fermat_mirror}
 \Gamma
  = \lc (t_0,\ldots,t_n) \in (\Gm)^{n+1} \relmid t_1^{n+1}=\cdots=t_n^{n+1}=t_0 \cdots t_n \rc.
\end{align}
This case appears in mirror symmetry for the Calabi--Yau hypersurface of degree $n+1$ in $\bP^n$,
and gives the $D_4$-singularity $x^3 + y^3$ for $n=2$.
The group $\Gammahat$ of characters of $\Gamma$
is isomorphic to $\bZ \times (\bZ/(n+1)\bZ)^{n-1}$,
and we write the character
$
 (t_0,\ldots,t_n) \mapsto t_1^{i_1+\cdots+i_n} t_2^{-i_2} \cdots t_n^{-i_n}$
for $(i_1,\ldots,i_n) \in \bZ \times (\bZ/(n+1)\bZ)^{n-1}$ as $\rho_{i_1,\ldots,i_n}$.
One has
$
 \bfk x_0^\dual \cong \rho_{1,\ldots,1},
 \bfk x_1^\dual \cong \rho_{1,0,\ldots,0},
 \bfk x_2^\dual \cong \rho_{1,n,0,\ldots,0},
 \ldots,
 \bfk x_n^\dual \cong \rho_{1,0,\ldots,0,n},
 \chi \cong \rho_{n+1,0,\ldots,0},
$
and
$
 \ker \chi
  \cong (\bZ/(n+1)\bZ)^n.
$

When $\gamma$ is the identity element,
one has
$
 V_\gamma=V,
$
$
 N_\gamma=0,
$
$
 \bfW_\gamma = \w
$
and
\begin{align}
 \Jac_\w &\cong \bfk[x_1,\ldots,x_n]/((n+1)x_1^n,\ldots,(n+1)x_n^n).
\end{align}
The element
\begin{align}
 x_0^{(n+1)(u-i)+i} x_1^i \cdots x_n^i
  \in \lb \Jac_\w \otimes \bfk[x_0] \otimes \chi^{\otimes u} \rb^\Gamma
\end{align}
for $i=0,\ldots,\min\{u,n-1\}$
contributes $\bfk((n+1)(u-i)+i)$ to $\HH^{2u}$,
and the element
\begin{align}
 x_0^\dual \otimes x_0^{(n+1)(u-i)+i+1} x_1^i \cdots x_n^i
  \in \lb x_0^\dual \otimes \Jac_\w \otimes \bfk[x_0] \otimes \chi^{\otimes u} \rb^\Gamma
\end{align}
for $i=0,\ldots,\min\{u,n-1\}$
contributes $\bfk((n+1)(u-i)+i)$ to $\HH^{2u+1}$.

When $V_\gamma = 0$ and $N_\gamma = V$,
one has $\bfW_\gamma = 0$ and the summand
\begin{align}
 \lb \chi^{\otimes u} \otimes \Lambda^{\dim N_\gamma} N_\gamma^\dual \rb^\Gamma
  &\cong \bfk x_0^\dual \wedge \cdots \wedge x_n^\dual,
\end{align}
contributes $\bfk(-1)$ to $\HH^{2u+\dim N_\gamma} = \HH^{-2+n+1} = \HH^{n-1}$.
The number $\upsilon_1(n)$ of such $\gamma$ is $2,21,204,\ldots$
for $n=2,3,4,\ldots$ respectively.

When $V_\gamma = \bfk x_0$ and
$
 N_\gamma = \bfk x_1 \oplus \cdots \oplus \bfk x_n,
$
one has $\bfW_\gamma = 0$ and the summand
\begin{align}
 \lb \Jac_{\bfW_\gamma} \otimes \chi^{\otimes u} \otimes \Lambda^{\dim N_\gamma} N_\gamma^\dual \rb^\Gamma
  \cong \bfk x_0^{(n+1)u+n} \otimes x_1^\dual \wedge \cdots \wedge x_n^\dual
\end{align}
in $\HH^{2u+\dim N_\gamma}$ contributes $\bfk((n+1)u+n)$ to $\HH^{2u+n}$ for $u \ge 0$,
and the summand
\begin{align}
 \lb x_0^\dual \otimes \Jac_{\bfW_\gamma} \otimes \chi^{\otimes u} \otimes \Lambda^{\dim N_\gamma} N_\gamma^\dual \rb^\Gamma
  &\cong \bfk x_0^\dual \otimes x_0^{(n+1)u+n+1} \otimes x_1^\dual \wedge \cdots \wedge x_n^\dual
\end{align}
in $\HH^{2u+\dim N_\gamma+1}$ contributes $\bfk((n+1)u+n)$ to $\HH^{2u+n+1}$ for $u \ge -1$.
The number $\upsilon_2(n)$ of such $\gamma$ is $2,6,52,\ldots$
for $n=2,3,4,\ldots$ respectively.

Note that one has
\begin{align}
 \upsilon_1(n) + \upsilon_2(n) = n^n,
\end{align}
since the left hand side is equal to
is the number of elements of the set
\begin{align}
 \lc (t_1, \ldots, t_n) \in (\bGm \setminus \{ 1 \})^n
 \mid \gamma_1^{n+1} = \cdots = \gamma_n^{n+1} = 1 \rc.
\end{align}

When
$
 V_\gamma = \bfk x_0 \oplus \cdots \oplus \bfk x_i
$
and
$
 \Lambda^{\dim N_\gamma} N_\gamma^\dual = \bfk x_{i+1}^\dual \wedge \cdots \wedge x_n^\dual
$
for $0 < i < n$,
one has
$
 \bfW_\gamma = x_1^{n+1} + \cdots + x_i^{n+1}
$
and
\begin{align}
 \Jac_{\bfW_\gamma} = \bfk[x_0] \otimes
  \linspan \lc 1, x_1, \ldots, x_1^{n-1} \rc
  \otimes \cdots \otimes \linspan \lc 1, x_i, \ldots, x_i^{n-1} \rc.
\end{align}
Since the weight of
\begin{align}
 x_0^{k_0} \cdots x_i^{k_i} \otimes x_{i+1}^\dual \wedge \cdots \wedge x_n^\dual
  \in \Jac_{\bfW_\gamma} \otimes \Lambda^{\dim N_\gamma} N_\gamma^\dual
\end{align}
for $(k_0, \ldots, k_i) \in \bN \times \{0, \ldots, n-1\}^i$
can never be proportional to $\chi$,
one has
\begin{align}
 \lb \Jac_{\bfW_\gamma} \otimes \chi^{\otimes u}
  \otimes \Lambda^{\dim N_\gamma} N_\gamma^\dual \rb^\Gamma
  \cong 0 
\end{align}
for any $u \in \bZ$
and similarly for
$
 \lb x_0^\dual \otimes \Jac_{\bfW_\gamma} \otimes \chi^{\otimes u}
  \otimes \Lambda^{\dim N_\gamma} N_\gamma^\dual \rb^\Gamma,
$
so that such $\gamma$ does not contribute to $\HH^*$.
In total, one has
\begin{align}
 \HH^0(\eY) &\cong \bfk, \\
 \HH^1(\eY) &\cong \bfk \oplus \bfk(-1)^{\oplus 4}, \\
 \HH^{2i+2}(\eY) &\cong \HH^{2i+3}(\eY)
  \cong \bfk(3i+1) \oplus \bfk(3i+2)^{\oplus 2} \oplus \bfk(3i+3) \quad \text{for } i \ge 0
\end{align}
for $n=2$,
\begin{align}
 \HH^0(\eY) &\cong 
 \HH^1(\eY) \cong \bfk, \\
 \HH^2(\eY) &\cong \bfk(-1)^{\oplus 27} \oplus \bfk(1) \oplus \bfk(4), \\
 \HH^3(\eY) &\cong \bfk(1) \oplus \bfk(3)^{\oplus 6} \oplus \bfk(4), \\
 \HH^{2i+4}(\eY) &\cong \bfk(4i+2) \oplus \bfk(4i+3)^{\oplus 6} \oplus \bfk(4i+5) \oplus \bfk(4i+8) \quad \text{for } i \ge 0, \\
 \HH^{2i+5}(\eY) &\cong \bfk(4i+2) \oplus \bfk(4i+5) \oplus \bfk(4i+7)^{\oplus 6} \oplus \bfk(4i+8) \quad \text{for } i \ge 0
\end{align}
for $n=3$,
\begin{align}
 \HH^0(\eY) &\cong 
 \HH^1(\eY) \cong \bfk, \\
 \HH^2(\eY) &\cong \bfk(1) \oplus \bfk(5), \\
 \HH^3(\eY) &\cong \bfk(-1)^{\oplus 256} \oplus \bfk(1) \oplus \bfk(5), \\
 \HH^4(\eY)
 &\cong
 \HH^5(\eY)
  \cong \bfk(2) \oplus \bfk(4)^{\oplus 52} \oplus \bfk(6) \oplus \bfk(10), \\
 \HH^{2i+6}(\eY)
 &\cong
 \HH^{2i+7}(\eY) \\
 &\cong \bfk(5i+3) \oplus \bfk(5i+7) \oplus \bfk(5i+9)^{\oplus 52} \oplus \bfk(5i+11) \oplus \bfk(5i+15) \quad \text{for } i \ge 0
\end{align}
for $n=4$,
and so on.

For $n=2$ there are twisted deformations where $\HH^2(\eY)_{-2} \cong
\bfk^{\oplus 2}$ comes from $\gamma \ne 0$, but there are no twisted deformations for all $n \ge 3$.

\subsection{Double covers of projective spaces}
 \label{sc:double_cover}

Consider the case
\begin{align}
  \w(x_1,\ldots,x_n)
  = x_1^2 + x_2^{2n}+\cdots+x_n^{2n} 
\end{align}
with
\begin{align} \label{eq:weight_fermat_mirror2}
 (d_1,\ldots,d_n;h)=(n,1,\ldots,1;2n)
\end{align}
and
\begin{align} \label{eq:Gamma_fermat_mirror2}
 \Gamma
  = \lc (t_0,\ldots,t_n) \in (\Gm)^{n+1} \relmid t_1^2 = t_2^{2n}=\cdots=t_n^{2n}=t_0 \cdots t_n \rc.
\end{align}
This case appears in mirror symmetry for the double cover of $\bP^{n-1}$
branched over a hypersurface of degree $2n$,
and gives the tacnode singularity $x^2+y^4$ for $n=2$.
One has $\Gammahat \cong \bZ \times \bZ/2\bZ \times (\bZ/2n\bZ)^{n-2}$
and
$
 \ker \chi
  \cong \bZ/2\bZ \times (\bZ/2n\bZ)^{n-1}.
$

When $\gamma$ is the identity element,
one has
$
 V_\gamma=V,
$
$
 N_\gamma=0,
$
$
 \bfW_\gamma = \w
$
and
\begin{align}
 \Jac_\w &\cong \bfk[x_1,\ldots,x_n]/(2 x_1, 2n x_2^{2n-1},\ldots,2n x_n^{2n-1})
\end{align}
The element
\begin{align}
 x_0^{2(u-i)n+2i} x_2^{2i} \cdots x_n^{2i}
  \in \lb \Jac_\w \otimes \bfk[x_0] \otimes \chi^{\otimes u} \rb^\Gamma
\end{align}
for $i=0,\ldots, \min\{u,n-1\}$
contributes $\bfk(2(u-i)n+2i)$ to $\HH^{2u}$,
and the element
\begin{align}
 x_0^\dual \otimes x_0^{2(u-i)n+2i+1} x_2^{2i} \cdots x_n^{2i}
  \in \lb x_0^\dual \otimes \Jac_\w \otimes \bfk[x_0] \otimes \chi^{\otimes u} \rb^\Gamma
\end{align}
for $i=0,\ldots,\min\{u,n-1\}$
contributes $\bfk(2(u-i)n+2i)$ to $\HH^{2u+1}$.

When $V_\gamma = 0$ and $N_\gamma = V$,
one has $\bfW_\gamma = 0$ and the summand
\begin{align}
 \lb \chi^{\otimes u} \otimes \Lambda^{\dim N_\gamma} N_\gamma^\dual \rb^\Gamma
  &\cong \bfk x_0^\dual \wedge \cdots \wedge x_n^\dual,
\end{align}
contributes $\bfk(-1)$ to $\HH^{2u+\dim N_\gamma} = \HH^{-2+n+1} = \HH^{n-1}$.
The set of such $\gamma$ is bijective
with the set of $(i_0,i_2,\ldots,i_{n-1}) \in \{ 0,\ldots, 2n-1 \}^{n-1}$
satisfying $i_0+n+i_2+\cdots+i_n \equiv  0$ modulo $2n$.
The number $\upsilon_3(n)$ of such $\gamma$ is $2,21,300,\ldots$
for $n=2,3,4,\ldots$ respectively.

When $V_\gamma = \bfk x_0$ and
$
 N_\gamma = \bfk x_1 \oplus \cdots \oplus \bfk x_n,
$
one has $\bfW_\gamma = 0$ and the summand
\begin{align}
 \lb \Jac_{\bfW_\gamma} \otimes \chi^{\otimes u} \otimes \Lambda^{\dim N_\gamma} N_\gamma^\dual \rb^\Gamma
  \cong \bfk x_0^{2nu+2n-1} \otimes x_1^\dual \wedge \cdots \wedge x_n^\dual
\end{align}
in $\HH^{2u+\dim N_\gamma}$ contributes $\bfk(2nu+2n-1)$ to $\HH^{2u+n}$ for $u \ge 0$,
and the summand
\begin{align}
 \lb x_0^\dual \otimes \Jac_{\bfW_\gamma} \otimes \chi^{\otimes u} \otimes \Lambda^{\dim N_\gamma} N_\gamma^\dual \rb^\Gamma
  &\cong \bfk x_0^\dual \otimes x_0^{2nu+2n} \otimes x_1^\dual \wedge \cdots \wedge x_n^\dual
\end{align}
in $\HH^{2u+\dim N_\gamma+1}$ contributes $\bfk(2nu+2n-1)$ to $\HH^{2u+n+1}$ for $u \ge -1$.
The number $\upsilon_4(n)$ of such $\gamma$ is $1,4,43,\ldots$
for $n=2,3,4,\ldots$ respectively.
One has
\begin{align}
 \upsilon_3(n) + \upsilon_4(n) = (2n-1)^{n-1}
\end{align}
just as in the case of $\upsilon_1(n) + \upsilon_2(n)$.

Other $\gamma$ do not contribute,
and the result is summarized as
\begin{align}
 \HH^0(\eY) &\cong \bfk, \\
 \HH^1(\eY) &\cong \bfk \oplus \bfk(-1)^{\oplus 3}, \\
 \HH^{2i+2}(\eY) &\cong \HH^{2i+3}(\eY)
  \cong \bfk(4i+2) \oplus \bfk(4i+3) \oplus \bfk(4i+4) \quad \text{for } i \ge 0
\end{align}
for $n=2$,
\begin{align}
 \HH^0(\eY) &\cong 
 \HH^1(\eY) \cong \bfk, \\
 \HH^2(\eY) &\cong \bfk(-1)^{\oplus 25} \oplus \bfk(2) \oplus \bfk(6), \\
 \HH^3(\eY) &\cong \bfk(2) \oplus \bfk(5)^{\oplus 4} \oplus \bfk(6), \\
 \HH^{2i+4}(\eY) &\cong \bfk(6i+4) \oplus \bfk(6i+5)^{\oplus 4} \oplus \bfk(6i+8) \oplus \bfk(6i+12) \quad \text{for } i \ge 0, \\
 \HH^{2i+5}(\eY) &\cong \bfk(6i+4) \oplus \bfk(6i+8) \oplus \bfk(6i+11)^{\oplus 4} \oplus \bfk(6i+12) \quad \text{for } i \ge 0
\end{align}
for $n=3$,
\begin{align}
 \HH^0(\eY) &\cong 
 \HH^1(\eY) \cong \bfk, \\
 \HH^2(\eY) &\cong \bfk(2) \oplus \bfk(8), \\
 \HH^3(\eY) &\cong \bfk(-1)^{\oplus 256} \oplus \bfk(1) \oplus \bfk(5), \\
 \HH^4(\eY)
 &\cong
 \HH^5(\eY)
  \cong \bfk(4) \oplus \bfk(7)^{\oplus 43} \oplus \bfk(10) \oplus \bfk(16), \\
 \HH^{2i+6}(\eY)
 &\cong
 \HH^{2i+7}(\eY) \\
 &\cong \bfk(8i+6) \oplus \bfk(8i+12) \oplus \bfk(8i+15)^{\oplus 43} \oplus \bfk(8i+18) \oplus \bfk(8i+24) \quad \text{for } i \ge 0
\end{align}
for $n=4$,
and so on.
There are twisted deformations for $n=2$,
but there are no twisted deformations for all $n \ge 3$.

\subsection{Sylvester's sequence}
 \label{sc:sylvester}

Consider the case
$
  \w(x_1,\ldots,x_n)
  = x_1^{s_1}+\cdots+x_n^{s_n}
$
where
$
 (s_i)_{i=1}^\infty = (2,3,7,43,1807,\ldots)
$
is the Sylvester's sequence
defined by
$
 s_i = 1 + s_1 \cdots s_{i-1}.
$
This case appears in mirror symmetry
for the Calabi--Yau hypersurface
in $\bP(1,s_1,\ldots,s_n)$,
and gives the cusp singularity $x^2 + y^3$
for $n=2$.
One has
\begin{align}
 (d_0,d_1,\ldots,d_n;h)
  = (1,h/s_1,\ldots,h/s_n;s_{n+1}-1)
\end{align}
and
$
 \phi \colon \Gm \to \Gamma
$
is an isomorphism.

When $\gamma$ is the identity element,
one has
$
 V_\gamma=V,
$
$
 N_\gamma=0,
$
$
 \bfW_\gamma = \w
$
and
\begin{align}
 \Jac_\w &\cong \bfk[x_1,\ldots,x_n]/( s_1 x_1^{s_1-1},\ldots, s_n x_n^{s_n-1}).
\end{align}
The monomial
$
 x_0^{w_\bsj+(u-1)h} x_1^{j_1} \cdots x_n^{j_n}
$
from the summand
\begin{align}
 \lb \Jac_\w \otimes \bfk[x_0] \otimes \chi^{\otimes u} \rb^\Gamma
\end{align}
contributes $\bfk(w_\bsj+(u-1)h)$
to $\HH^{2u}$
for each $\bsj = (j_1,\ldots,j_n)$
satisfying
$
 0 \le j_i \le s_i-1
$
for $i=1,\ldots,n$
and
$
 w_\bsj \coloneqq h - d_1 j_1 - \cdots - d_n j_n
  \ge -(u-1) h.
$ 
Such $\bsj$ also contributes
$\bfk(w_\bsj+(u-1)h)$
to $\HH^{2u+1}$
just as in \pref{sc:fermat}.

Each $\gamma$
with $V_\gamma = 0$
contributes $\bfk(-1)$ to $\HH^{n-1}$.
The set of such $\gamma$ can be identified
with the set of integers from $0$ to $h-1$
prime to all $s_i$
for $i=1,\ldots,n$.
The cardinality of this set is given by
$2,12,504,\ldots$
for $n=2,3,4,\ldots$ respectively.

One never has
$V_\gamma = \bfk x_0$
in this case.
For any $\gamma$ with $V_\gamma \ne 0, V$ does not contribute to $\HH^*$
just as in \pref{sc:fermat}.

The result is summarized as
\begin{align}
 \HH^0(\eY) &\cong \bfk, \\
 \HH^1(\eY) &\cong \bfk \oplus \bfk(-1)^{\oplus 2}, \\
 \HH^{2i+2}(\eY) &\cong \HH^{2i+3}(\eY)
  \cong \bfk(6i+4) \oplus \bfk(6i+6) \quad \text{for } i \ge 0
\end{align}
for $n=2$,
\begin{align}
 \HH^0(\eY) &\cong \bfk, \\
 \HH^1(\eY) &\cong \bfk, \\
 \HH^2(\eY) &\cong \bfk(-1)^{\oplus 12} \oplus \bfk(\bsw), \\
 \HH^3(\eY) &\cong \bfk(\bsw), \\
 \HH^{2i+4}(\eY) &\cong \HH^{2i+5}(\eY)
  \cong \bfk(\bswtilde+ 42 (i+1) ) \quad \text{for } i \ge 0
\end{align}
where $\bsw = (4,10,12,16,18,22,24,28,30,36,42)$ and
$\bswtilde = (-2,\bsw)$
for $n=3$,
and so on.
There are no twisted deformations for all $n \ge 2$.

\subsection{Exceptional unimodal singularities}
 \label{sc:EUS}

Consider the weighted homogeneous polynomials
given in \pref{tb:unimodal},
which define Arnold's 14 exceptional unimodal singularities
\cite[Table 14]{MR0467795}.
We take $\Gamma = \phi(\Gm)$.
The Hilbert polynomial for the Jacobi ring
\begin{align}
 \Jac_\w \coloneqq \bfk[x_1,x_2,x_3]/(\partial_1 \w, \partial_2 \w, \partial_3 \w)
\end{align}
is given by
\begin{align} \label{eq:weight}
 \frac{(1-T^{h-d_1})(1-T^{h-d_2})(1-T^{h-d_3})}{(1-T^{d_1})(1-T^{d_2})(1-T^{d_3})}.
\end{align}
We define a non-decreasing sequence $\bswtilde = \lb w_0 \le \cdots \le w_{\mu-1} \rb$
of integers in such a way that \pref{eq:weight} is equal to
$
 \sum_{i=0}^{\mu-1} T^{h-w_i}.
$
Then one always has $w_0 = -2$,
and $\bsw \coloneqq (w_i)_{i=1}^{\mu-1}$ is as in \pref{tb:unimodal}.
The identity element $\gamma = \id_{V}$ contributes
$\bfk$ to $\HH^0$ and $\HH^1$,
$\bfk(\bsw)$ to $\HH^2$ and $\HH^3$, and
$\bfk(\bswtilde+(i+1)h)$ to $\HH^{2i+4}$ and $\HH^{2i+5}$ for $i \ge 0$.
By adding the term $x_0^h$,
one obtains a smooth Deligne--Mumford stack $\eY_1$
derived-equivalent to a K3 surface.
Since $V^\gamma$ for $\gamma \ne \id_V$ does not contain the $x_0$-axis,
contributions from $\gamma \ne \id_{V}$ is the same for $\eY$
and $\eY_1$.
On the other hand,
the rank of the total Hochschild cohomology
of $\eY_1$ is 24,
and $\gamma = \id_V$ contributes $\bfk$ to $\HH^0(\eY_1)$
via the element $1 \in \Jac_{\w}$ of degree 0,
$\bfk^{\oplus (\mu-2)}$ to $\HH^2(\eY_1)$
via elements of degrees between $1$ and $h+1$, and
$\bfk$ to $\HH^4$
via the element of degree $h+2$.
It follows that $\gamma \ne \id_V$ contribute $\bfk^{\oplus (24-\mu)}$
to $\HH^2(\eY_1)$.
Since $V_\gamma$ does not contain the $x_0$-axis,
each of these contributions contains $x_0^\dual$
from $\Lambda^{\dim N_\gamma} N_\gamma$,
and hence the $\Gm$-weight for the contribution to $\HH^2(\eY)$ is $1$.
This shows
\begin{align}
 \HH^0(\eY)
  &\cong \bfk, \\
 \HH^1(\eY)
  &\cong \bfk, \\
 \HH^2(\eY)
	&\cong \bfk(-1)^{\oplus (24-\mu)} \oplus \bfk(\bsw), \\
 \HH^3(\eY)
  &\cong \bfk(\bsw), \\
 \HH^{2i+4}(\eY)
  &\cong \HH^{2i+5}(\eY)
  \cong \bfk(\bswtilde+(i+1) h)
   \quad \text{for } i \ge 0.
\end{align}
There are no twisted deformations in all these cases.

\begin{table}[t]
\begin{align}
\begin{array}{cccccc}
  \toprule
	\text{Name} & \text{Normal form} & (d_1,d_2,d_3;h) &\mu & \bsw \\
  \midrule
Q_{10} & x^2 z + y^3 + z^4 & (9,8,6;24) &10 &(4,6,7,10,12,15,16,18,24)\\\
Q_{11} & x^2 z + y^3 + y z^3 & (7,6,4;18) &11 &(2,4,5,6,8,10,11,12,14,18)\\
Q_{12} & x^2 z + y^3 + z^5 & (6,5,3;15) &12 &(1,3,4,4,6,7,9,9,10,12,15)\\
Z_{11} & x^2 + y^3 z + z^5 & (15,8,6;30)&11 &(4,6,10,12,14,16,18,22,24,30) \\
Z_{12} & x^2 + y^3 z + y z^4 & (11,6,4;22)& 12 & (2,4,6,8,10,10,12,14,16,18,22) \\
Z_{13} & x^2 + y^3 z + z^6 & (9,5,3;18) & 13 & (1,3,4,6,7,8,9,10,12,13,15,18) \\
S_{11} & x^2 z + x y^2 + z^4 & (6,5,4;16) & 11 & (2,3,4,6,7,8,10,11,12,16) \\
S_{12} & x^2 z + x y^2 + y z^3 & (5,4,3;13) & 12 & (1,2,3,4,5,6,7,8,9,10,13) \\
W_{12} & x^2 + y^4 + z^5 & (10,5,4;20) &12 & (2,3,6,7,8,10,11,12,15,16,20) \\
W_{13} & x^2 + y^4 + y z^4 & (8,4,3;16) &13 & (1,2,4,5,6,7,8,9,10,12,13,16)\\
E_{12} & x^2 + y^3 + z^7 & (21,14,6;42) & 12& (4,10,12,16,18,22,24,28,30,36,42) \\
E_{13} & x^2 + y^3 + y z^5 & (15,10,4;30) & 13& (2,6,8,10,12,14,16,18,20,22,26,30) \\
E_{14} & x^2 + y^3 + z^8 & (12,8,3;24) & 14& (1,4,6,7,9,10,12,13,15,16,18,21,24) \\
U_{12} & x^3 + y^3 + z^4 & (4,4,3;12) &12& (1,2,2,4,5,5,6,8,8,9,12) \\
  \bottomrule
\end{array}
\end{align}
\caption{14 exceptional unimodal singularities}
\label{tb:unimodal}
\end{table}

\subsection{Cusp singularities}
 \label{sc:cusp}

Consider the case
\begin{align} \label{eq:cusp_polynomial}
 \bfW(x_0,\ldots,x_n)
  = x_1^{n+1}+\cdots+x_n^{n+1} + x_0 \cdots x_n
\end{align}
with the same weight \pref{eq:weight_fermat_mirror}
and the group \pref{eq:Gamma_fermat_mirror}
as in \pref{sc:fermat}.

When $\gamma$ is the identity element,
one has
$
 V_\gamma=V,
$
$
 N_\gamma=0,
$
and
$
 \bfW_\gamma = \bfW.
$
The subring of $S$ consisting of semi-invariants
with respect to $\chi$
is equal to the invariant ring with respect to $\ker \chi \cong (\bmu_{n+1})^n$.
This ring is generated by $n+2$ monomials
$
 x_0^{n+1}, \ldots, x_n^{n+1}, x_0 \cdots x_n
$
with one relation
$
 x_0^{n+1} \cdots x_n^{n+1} = (x_0 \cdots x_n)^{n+1}.
$
The $n+1$ monomials
$
 x_1^{n+1}, \ldots, x_n^{n+1}, x_0 \cdots x_n
$
are zero in $\Jac_{\bfW}$,
so that
\begin{align}
 \dim \lb \Jac_{\bfW} \otimes \chi^{\otimes u} \rb^\Gamma =
\begin{cases}
 0 & u \le -1, \\
 1 & u \ge 0.
\end{cases}
\end{align}
The Grothendieck ring $\rep_\Gamma$ of finite-dimensional $\Gamma$-vector spaces
can be identified with the group ring of $\Gammahat$,
generated by $[x_0], \ldots, [x_n]$ and their inverses
with relations
$
 [x_0]^{n+1} = \cdots = [x_n]^{n+1} = [x_0] \cdots [x_n].
$
The ring $S$ is a $\Gammahat$-graded ring,
and the class
$\ld C^*(d\bfW) \rd$ 
of the Koszul complex
is an element
of a suitable completion
of $\rep_\Gamma$
given by
\begin{align} \label{eq:Koszul_coh}
 \ld C^*(d\bfW) \rd
  &= (1+[x_0]+\cdots+[x_0]^{n-1}) \cdots (1+[x_n]+\cdots+[x_n]^{n-1}).
\end{align}
Among $n^{n+1}$ monomials in \pref{eq:Koszul_coh},
only $[x_0]^i\cdots[x_n]^i$ for $i=0,\ldots,{n-1}$ are proportional to a power of $[\chi]$.
By projecting to the subring generated by $T \coloneqq [x_0] \cdots [x_n]$,
one obtains
\begin{align}
 \ld \lb C^*(d \bfW) \rb^\Gamma \rd
  = 1 + T + \cdots + T^{n-1}.
\end{align}
Since $\lb \partial_i \bfW \rb_{i=0}^{n-1}$ is a regular sequence in $S$,
the cohomology of the Koszul complex is concentrated in degree $-1$ and $0$.
It follows that
\begin{align}
 \ld \Jac_{\bfW} \rd - \ld H^{-1}(d \bfW) \rd
  = 1 + T + \cdots + T^{n-1},
\end{align}
so that
\begin{align}
 \dim \lb H^{-1}(d\bfW) \otimes \chi^{\otimes (u+1)} \rb^\Gamma =
\begin{cases}
 0 & u \le n-2, \\
 1 & u \ge n-1.
\end{cases}
\end{align}
Hence $\gamma=0$ contributes $\bfk$ to $\HH^{2u}$ for $u \ge 0$
and $\HH^{2u+1}$ for $u \ge n-1$.

Contributions from non-trivial $\gamma$ is the same as in \pref{sc:fermat}.
The result is summarized as
\begin{align}
 \HH^0(\eY) &\cong \bfk, \\
 \HH^1(\eY) &\cong \bfk^{\oplus 4}, \\
 \HH^{i+2}(\eY) &\cong \bfk^{\oplus 3} \quad \text{for } i \ge 0
\end{align}
for $n=2$,
\begin{align}
 \HH^0(\eY) &\cong \bfk, \\
 \HH^1(\eY) &\cong 0, \\
 \HH^2(\eY) &\cong \bfk^{\oplus 28}, \\
 \HH^3(\eY) &\cong \bfk^{\oplus 6}, \\
 \HH^{4+i}(\eY) &\cong \bfk^{\oplus 7} \quad \text{for } i \ge 0
\end{align}
for $n=3$,
\begin{align}
 \HH^0(\eY) &\cong \bfk, \\
 \HH^1(\eY) &\cong 0, \\
 \HH^2(\eY) &\cong \bfk, \\
 \HH^3(\eY) &\cong \bfk^{\oplus 256}, \\
 \HH^4(\eY) &\cong \bfk^{\oplus 53}, \\
 \HH^5(\eY) &\cong \bfk^{\oplus 52}, \\
 \HH^{6+i}(\eY) &\cong \bfk^{\oplus 53} \quad \text{for } i \ge 0
\end{align}
for $n=4$,
and so on.

Similarly,
the case
\begin{align} \label{eq:cusp_polynomial2}
 \bfW(x_0,\ldots,x_n)
  = x_1^2+x_2^{2n} + \cdots+x_n^{2n} + x_0 \cdots x_n
\end{align}
with the same weight \pref{eq:weight_fermat_mirror2}
and the group \pref{eq:Gamma_fermat_mirror2}
as in \pref{sc:double_cover} gives
\begin{align}
 \HH^0(\eY) &\cong \bfk, \\
 \HH^1(\eY) &\cong \bfk^{\oplus 3}, \\
 \HH^{i+2}(\eY) &\cong \bfk^{\oplus 2} \quad \text{for } i \ge 0
\end{align}
for $n=2$,
\begin{align}
 \HH^0(\eY) &\cong \bfk, \\
 \HH^1(\eY) &\cong 0, \\
 \HH^2(\eY) &\cong \bfk^{\oplus 26}, \\
 \HH^3(\eY) &\cong \bfk^{\oplus 4}, \\
 \HH^{4+i}(\eY) &\cong \bfk^{\oplus 5} \quad \text{for } i \ge 0
\end{align}
for $n=3$,
and so on.

\subsection{Ordinary double points}
 \label{sc:odp}

Consider the case
$
 \bfW(x_0,x_1,\ldots,x_{n+1}) = x_0^{n+1} + \cdots + x_n^{n+1} - (n+1) x_0 \cdots x_n
$
with the same weight \pref{eq:weight_fermat_mirror}
and the group \pref{eq:Gamma_fermat_mirror}
as in \pref{sc:fermat}.

When $\gamma$ is the identity element,
one has
$
 V_\gamma=V,
$
$
 N_\gamma=0,
$
and
$
 \bfW_\gamma = \bfW.
$
The generators
$
 x_0^{n+1}, \ldots, x_n^{n+1}, x_0 \cdots x_n
$
of the invariant ring $S^{\ker \chi}$
belongs to the same class in
$
 \Jac_{\bfW},
$
so that 
\begin{align}
 \dim \lb H^0(d\bfW) \otimes \chi^{\otimes k} \rb^\Gamma =
\begin{cases}
 0 & k \le -1, \\
 1 & k \ge 0.
\end{cases}
\end{align}
The same reasoning as in \pref{sc:cusp} shows that
$\gamma=0$ contributes $\bfk$ to $\HH^{2i}$ for $i \ge 0$
and $\HH^{2i+1}$ for $i \ge 2$.

Contributions from non-trivial $\gamma$ is the same as in \pref{sc:cusp},
except that the coordinate $x_0$ behaves exactly the same way as other coordinates.
The result is summarized as
\begin{align}
 \HH^0(\eY) &\cong \bfk, \\
 \HH^1(\eY) &\cong \bfk^{\oplus 2}, \\
 \HH^{i+2}(\eY) &\cong \bfk \quad \text{for } i \ge 0
\end{align}
for $n=2$,
\begin{align}
 \HH^0(\eY) &\cong \bfk, \\
 \HH^1(\eY) &\cong 0, \\
 \HH^2(\eY) &\cong \bfk^{\oplus 22}, \\
 \HH^3(\eY) &\cong 0, \\
 \HH^{4+i}(\eY) &\cong \bfk \quad \text{for } i \ge 0
\end{align}
for $n=3$,
\begin{align}
 \HH^0(\eY) &\cong \bfk, \\
 \HH^1(\eY) &\cong 0, \\
 \HH^2(\eY) &\cong \bfk, \\
 \HH^3(\eY) &\cong \bfk^{\oplus 204}, \\
 \HH^4(\eY) &\cong \bfk, \\
 \HH^5(\eY) &\cong 0, \\
 \HH^{6+i}(\eY) &\cong \bfk \quad \text{for } i \ge 0
\end{align}
for $n=4$,
and so on.

Similarly,
the case
\begin{align} \label{eq:cusp_polynomial3}
 \bfW(x_0,\ldots,x_n)
  = x_1^2+x_2^{2n} + \cdots+x_n^{2n} + x_0^{2n} - n \, x_0^2 x_2^2 \cdots x_n^2
\end{align}
with the same weight \pref{eq:weight_fermat_mirror2}
and the group \pref{eq:Gamma_fermat_mirror2}
as in \pref{sc:double_cover} gives
\begin{align}
 \HH^0(\eY) &\cong \bfk, \\
 \HH^1(\eY) &\cong \bfk^{\oplus 2}, \\
 \HH^{i+2}(\eY) &\cong \bfk \quad \text{for } i \ge 0
\end{align}
for $n=2$,
\begin{align}
 \HH^0(\eY) &\cong \bfk, \\
 \HH^1(\eY) &\cong 0, \\
 \HH^2(\eY) &\cong \bfk^{\oplus 22}, \\
 \HH^3(\eY) &\cong 0, \\
 \HH^{4+i}(\eY) &\cong \bfk \quad \text{for } i \ge 0
\end{align}
for $n=3$,
and so on.

\section{Generators and formality}
 \label{sc:generator}

We use the same notation as in \pref{sc:whsing}
(see \pref{eq:scrY} and \pref{eq:bfW} in particular),
and assume the existence of a tilting object $E$
of $\emf{\bA^n}{\Gamma}{\w}$.
Here,
an object $E$
of $\emf{\bA^n}{\Gamma}{\w}$
is \emph{tilting}
if the cohomologies of the endomorphism dg algebra $\dgend E$
is concentrated in cohomological degree 0
and
$\emf{\bA^n}{\Gamma}{\w}$
is generated by $E$
by shifts, cones, and direct summands.
Let $\eE$ be the pull-back of $E$ to 
$\emf{\bA_U^n}{\Gamma}{\w}$,
so that one has
$
 \End(\eE)
  \cong A^0 \otimes \bsk
$
where
$
 \bsk \coloneqq \bfk[U]
$
is the coordinate ring of $U$
and
$
 A^0 \coloneqq \End E.
$
Let further $\eS$ be the push-forward of $\eE$
to $\emf{\bA_U^{n+1}}{\Gamma}{\bfW}$,
considered as an object of $\coh \scrY$
via a variation
\begin{align}
 \emf{\bA_U^{n+1}}{\Gamma}{\bfW} \simeq \coh \scrY
\end{align}
of \cite[Theorems 16]{MR2641200},
which can be proved by a straightforward adaptation
of the original proof
(see the proof of \pref{th:generation} below).
The relation between push-forward of matrix factorizations
and Orlov's theorem
is discussed in \cite{MR3044454}.

\begin{thm} \label{th:generation}
The object $\eS$ split-generates $\perf \scrY$.
\end{thm}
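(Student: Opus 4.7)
The plan is to reduce to the central fiber and then combine Orlov's theorem on both sides of the push-forward.

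First, since $\pi_\scrY \colon \scrY \to U$ is proper and $U = \Spec \bsk$ is affine, a Nakayama-type argument for perfect complexes (combined with proper descent) shows that it suffices to verify that the derived restriction $\eS_0$ split-generates $\perf \eY_0$. By the $\Gm$-equivariance of both the family $\scrY$ and of $\eS$, this reduces to the single central point $u = 0$. Base change identifies $\eS_0 = i_* E$, where $i \colon \lc x_0 = 0 \rc \hookrightarrow \bA^{n+1}$ is the closed embedding and $i_*$ is the push-forward on matrix factorizations; this is well-defined because $\bfW_0 = \w$ does not involve $x_0$, so $i^* \bfW_0 = \w$.

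Next I apply Orlov's theorem on each side. Since $d_0 > 0$, the graded algebra $\bar R = \bfk[x_1,\ldots,x_n]/(\w)$ falls in the ``general type'' regime, yielding a semi-orthogonal decomposition
\begin{align*}
 \mf(\bA^n, \w, \Gamma) = \la D^b(\coh \eX),\, \bfk,\, \bfk(-\chi_0),\, \ldots,\, \bfk(-(d_0-1)\chi_0) \ra,
\end{align*}
with $d_0$ exceptional twisted residue fields. On the target side, the Calabi--Yau relation $\chi = \chi_0 + \chi_1 + \cdots + \chi_n$ gives the variant of Orlov's equivalence $\mf(\bA^{n+1}, \Gamma, \bfW_0) \simeq \coh \eY_0$. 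A direct Koszul computation exploiting $\partial_{x_0} \bfW_0 = 0$ identifies, under these two equivalences, the functor $i_*$ as follows: it sends the coherent summand $D^b(\coh \eX)$ to the subcategory of $\coh \eY_0$ supported on the divisor at infinity via $j_* \colon \eX \hookrightarrow \eY_0$, and sends the exceptional summand $\bfk(-k\chi_0)$ to the line bundle $\cO_{\eY_0}(-k\chi_0)$ for each $k = 0, \ldots, d_0 - 1$. Since $E$ tilts $\mf(\bA^n, \w, \Gamma)$, its image $\eS_0 = i_* E$ contains as summands both a tilting object of $j_*(\perf \eX)$ and each line bundle $\cO_{\eY_0}(-k\chi_0)$ for $k = 0,\ldots,d_0-1$. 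Using the short exact sequences coming from the Cartier divisor $\eX \subset \eY_0$ (together with repeated twisting), any further line bundle $\cO_{\eY_0}(k \chi_0)$ and any $j_* \cF$ with $\cF \in \perf \eX$ lies in the thick closure of $\eS_0$, giving split-generation of $\perf \eY_0$.

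The main obstacle is the Koszul identification of the images of Orlov's exceptional twisted residue fields under $i_*$ with the line bundles $\cO_{\eY_0}(-k\chi_0)$; this requires carefully tracking the $\Gamma$-grading across both Orlov equivalences and matching explicit graded matrix factorizations, and is the technical heart of the proof. A secondary subtlety is making the fiberwise reduction rigorous, which is handled by combining standard Nakayama detection for perfect complexes with proper descent on the family $\scrY \to U$.
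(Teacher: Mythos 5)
Your overall strategy (push a generator of the $n$-variable category forward and track it through Orlov-type equivalences) is in the same spirit as the paper's, but two steps have genuine gaps.

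First, the reduction to the central fiber does not work as stated. The $\Gm$-action on $U$ only identifies fibers $\eY_u$ lying on the same orbit; the orbit of a point $u \neq 0$ does not contain $0$, and $\eY_u$ is genuinely non-isomorphic to $\eY_0$ (it is a partial smoothing). So split-generation at $u=0$ says nothing about other fibers unless you also prove that the locus of $u$ for which $\eS_u$ split-generates $\perf \eY_u$ is open and $\Gm$-invariant; you assert no such openness, and it is not routine for these singular stacky fibers. The paper sidesteps the issue entirely by running Orlov's machinery for the graded rings $\bsRbar$ and $\bsR$ over $\bsk = \bfk[U]$, i.e.\ the whole argument takes place in the family and no fiberwise reduction is ever made.

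Second, and more seriously, your terminal generation step produces too few line bundles. From $\cO_{\eY_0}(-k\chi_0)$, $k = 0,\dots,d_0-1$, together with objects pushed forward from $\eX$, the divisor exact sequences
$0 \to \cO_{\eY_0}(\rho - \chi_0) \to \cO_{\eY_0}(\rho) \to j_*\cO_{\eX}(\rho) \to 0$
only let you move within the coset $\rho + \bZ\chi_0$ of $\Gammahat$, so the thick subcategory you build contains only the line bundles $\cO_{\eY_0}(k\chi_0)$ with $k \in \bZ$. Restricting to the open complement $\eY_0 \setminus \eX \cong [\Spec \Rbar / \ker \chi_0]$ kills everything supported at infinity and sends every $\cO_{\eY_0}(k\chi_0)$ to the trivial line bundle; hence whenever $\ker\chi_0 \subset \Gamma$ is nontrivial (as it is in all the paper's main examples, e.g.\ the Fermat case with the full group $\Gamma$) your collection misses the twists by nontrivial characters of $\ker\chi_0$ and cannot split-generate $\perf \eY_0$. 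The source of the error is the bookkeeping through the semiorthogonal decomposition: since $E$ split-generates all of $\mf(\bA^n,\w,\Gamma)$, the thick closure of $\eS$ contains the pushed-forward residue field together with \emph{all} of its $\Gammahat$-twists, i.e.\ $\bsR/\frakm(\rho)$ for every character $\rho$, not just $d_0$ of them. This is exactly what the paper exploits: it identifies the images of $\bsR/\frakm(-i)$ for $0 \le i < h$ under the Orlov equivalence with the restrictions to $\scrY$ of the left dual of the full exceptional collection $(\cO_{\bP}(-i))_{i=0}^{h-1}$ on the ambient weighted projective bundle $\bP$, and then invokes \cite[Lemma 5.4]{MR3364859} (a resolution-of-the-diagonal argument) to conclude split-generation of $\perf\scrY$. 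That last step also settles a point you leave implicit: since $\eY_0$ is singular, one must check the generators produced are actually perfect, which the paper does by exhibiting them as restrictions of objects from the smooth ambient stack.
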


\begin{proof}
For the simplicity of notation,
we assume
$\Gamma \cong \bGm$,
so that $\scrY$ is an anti-canonical hypersurface
in $\bP \coloneqq \bP_U(d_0, \ldots, d_n)$;
the extension to the general case is straightforward
(cf.~e.g., \cite[Section 3]{0604361}).
We write
$
 \bsR
  \coloneqq \bsk[x_0,\ldots,x_n]/(\bfW)
$
and
$
 \bsRbar
  \coloneqq \bsk[x_1,\ldots,x_n]/(\w)
  \cong \bsR / (x_0)
  \cong \Rbar \otimes \bsk.
$
We will work with
$D^b_\sing \lb \gr \bsRbar \rb$
and $D^b_\sing(\gr \bsR)$
instead of
$
 \emf{\bA_U^n}{\Gamma}{\w}
$
and
$
 \emf{\bA_U^{n+1}}{\Gamma}{\bfW},
$
which are equivalent by
\cite[Theorem 39]{MR2641200}.
Here $D^b_\sing \lb \gr \bsR \rb$
is the quotient of $D^b \gr \bsR$
by the full subcategory
consisting of bounded complexes of projective modules,
denoted by $\mathbf{D}^{\mathrm{gr}}_\sing \lb \bsR \rb$
in \cite{MR2641200},
and similarly for $D^b_\sing \lb \gr \bsRbar \rb$.
Since the object
$
 \bsRbar / (x_1, \ldots, x_n)
$
of
$
 D^b_\sing \lb \gr \bsRbar \rb
$
can be described as a cone constructed out of $\eE$,
and its push-forward to
$
 D^b_\sing(\gr \bsR)
$
is $\bsR/\frakm$ where
$
 \frakm \coloneqq (x_0, \ldots, x_n),
$
it suffices to show that the images of $\bsR/\frakm(i)$ for $i \in \bZ$
under the equivalence
\begin{align}
 D^b_\sing(\gr \bsR) \cong \coh \scrY
\end{align}
split-generate $\perf \scrY$.
Since $\bsR$ is the quotient of a polynomial ring
in $n+1$ variables
by the ideal generated by a homogeneous polynomial
whose degree is the sum of degrees of the variables,
one has
\begin{align}
 \hom_\bsR(\bsR/\frakm(-i), \bsR(j)) =
\begin{cases}
 \bsk[-n] & i = -j, \\
 0 & \text{otherwise}.
\end{cases}
\end{align}
Now \cite[Lemma 15]{MR2641200} gives
semiorthogonal decompositions
\begin{align}
 D^b(\gr \bsR_{\ge 0})
  = \la \cD_0, \cS_{\ge 0} \ra
  = \la \cP_{\ge 0}, \cT_0 \ra,
\end{align}
and the proof of \cite[Theorem 16]{MR2641200}
gives
equivalences
\begin{align}
 \cD_0 \cong \coh \scrY, \quad
 \cT_0 \cong D^b_\sing(\gr \bsR),
\end{align}
and an equality
\begin{align}
\cD_0 = \cT_0,
\end{align}
where $D^b(\gr \bsR_{\ge 0})$
is the derived category of finitely-generated non-negatively graded $\bsR$-modules, and
$\cS_{\ge 0}$ and $\cP_{\ge 0}$ are its full subcategories
generated by torsion modules
(i.e., modules $M$ such that $\frakm^k M = 0$ for some $k \in \bN$
which may depend on $M$)
and free modules respectively.
In order to send an object
$
 \Zbar \in D^b_\sing(\gr \bsR)
$
by the equivalence
\begin{align}
 D^b_\sing(\gr \bsR)
  \cong \cT_0
  = \cD_0
  \cong \coh \scrY,
\end{align}
we
\begin{enumerate}
 \item
find an object
$
 Z \in D^b(\gr \bsR_{\ge 0})
$ 
which goes to $\Zbar$ by the localization functor
$
 D^b(\gr \bsR_{\ge 0}) \to D^b_\sing(\gr \bsR),
$
 \item
take the semiorthogonal component $M$ of $Z$,
i.e.,
find a distinguished triangle
\begin{align}
 M \to Z \to N \to M[1]
\end{align}
such that $M \in \cT_0 = \! \,^\bot \cP_{\ge 0}$
and $N \in \cP_{\ge 0}$, and
 \item
take the image $\cM$ of $M$ by the localization functor
$
 \pi \colon D^b(\gr \bsR_{\ge 0}) \to \coh \scrY.
$
\end{enumerate}
If we start with $Z_i = (\bsR/\frakm)(-i)[-n+1]$
for $0 \le i < h$,
then
\begin{align} \label{eq:Z'}
 \Cone \lb (\bsR/\frakm)(-i)[-n] \to \bsR(-i) \rb
\end{align}
belongs to
$
 \cS_{\ge i}^\bot,
$
which is equal to
$
 \!\ ^{\bot} \cP_{\ge i}
$
in the semiorthogonal decomposition
\begin{align}
 D^b(\gr \bsR_{\ge 0})
  = \la \cP_{\ge 0}, \cT_0 \ra
  = \la \cP_{\ge i}, \bsR(-i+1), \bsR(-i+2), \ldots, \bsR, \cT_0 \ra.
\end{align}
Since $(\bsR/\frakm)(-i)$ is orthogonal to
$\bsR(-i+1), \ldots, \bsR$ and
its image in $\coh \scrY$ is zero,
the image $\cM_i \in D^b \coh \scrY$
of the semiorthogonal component $M_i \in \cT_0 = \cD_0$ of $Z_i$
is isomorphic
to the image of the semiorthogonal component of $\bsR(-i)$.

Let
$
\bsT \coloneqq \bsk[x_0,\ldots,x_n]
$
be the coordinate ring
of the ambient space $\bP$.
The fact that $\deg \bfW = h$
implies the existence of an isomorphism
\[
  \hom_{\gr \bsR} \lb \bsR(-i), \bsR(-j) \rb
  \simeq
  \hom_{\gr \bsT} \lb \bsT(-i), \bsT(-j) \rb
\]
of $\bsk$-modules
for $0 \le j \le i < h$,
so that the operation of taking the semiorthogonal component of $\bsR(-i)$
is the same as that for the polynomial ring $\bsT$.
The resulting object $\cM_i$ is the restriction to $\scrY$
of the object $\cF_i$ in $\coh \bP$
obtained by mutating $\cO_\bP(-i)$
across $\cO_\bP(-i+1), \ldots, \cO_\bP(-1)$.
Since mutation preserves fullness of the collection,
the collection $(\cF_i)_{i=0}^{h-1}$
is full by \cite{MR509388}.
Now \cite[Lemma 5.4]{MR3364859} shows $\bigoplus_{i=0}^{h-1} \cM_i$
split-generates $\per \scrY$.
\end{proof}

It follows from \cite[Theorem 1.1]{MR3223358} that
a choice of a section of
$
 \omega_{\bA_U^{n+1}/U}(\chi)
$
gives an isomorphism
$
 \End(\eS) \cong A \otimes \cO_U,
$
where $A$ is the degree $n-1$ trivial extension algebra of $A^0$.
(The definition of the trivial extension algebra is recalled
in \pref{sc:introduction}; see \pref{eq:trivial extension}.)
Let $\eA$
be the minimal model
of the Yoneda dg algebra
$
 \dgend \lb \eS \rb,
$
so that one has a quasi-equivalence
\begin{align}
 \Qcoh \scrY \simeq \Mod(\eA)
\end{align}
of $\bsk$-linear pretriangulated $A_\infty$-categories.

Let
$
 \eA_0 \coloneqq \eA \otimes_{\bsk} \bfk
$
be the $A_\infty$-algebra over $\bfk$
obtained by restricting $\eA$ to the origin $0 \in U$.
By using a $\Gm$-action,
we can prove the following:

\begin{thm} \label{th:formal}
$\eA_0$ is formal.
\end{thm}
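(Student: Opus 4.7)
The plan, following the strategy sketched for the cuspidal case in \pref{th:cuspthm}, is to equip a chain-level model of $\dgend(\eS_0)$ with a $\Gm$-action whose induced weight on cohomology is a non-constant linear function of the cohomological degree. Then $\Gm$-equivariance of any minimal model forces $\mu^d=0$ for $d\ne 2$, since $\mu^d$ shifts cohomological degree (and hence weight) by $2-d$.

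The relevant $\Gm$-action is obtained by restricting the action on $\bA^{n+1}\times U$ from \eqref{eq:Gm-action_on_Y} to the fixed point $0\in U$; it reduces to $t\cdot(x_0,\ldots,x_n)=(t^{-1}x_0,x_1,\ldots,x_n)$ and preserves $\eY_0$ because $\w$ does not involve $x_0$. The sheaf $\eS_0$ arises as the $u=0$ restriction of $\eS$, constructed $\Gm$-equivariantly from the tilting object $\eE$, so $\eS_0\in\coh\eY_0$ carries a natural $\Gm$-equivariant structure. A $\Gm$-equivariant dg enhancement of $\End(\eS_0)$ can be produced either by taking the \v{C}ech complex with respect to the $\Gm$-invariant affine cover $\{x_i\neq 0\}_{i=0,\ldots,n}$ of $\eY_0$, or by using $(\Gamma\times\Gm)$-equivariant matrix factorizations and the equivalence \eqref{eq:Orlov}. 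Since $\Gm$ is linearly reductive, each cochain complex splits into weight spaces and the homological perturbation lemma can be run equivariantly, yielding a $\Gm$-equivariant minimal $A_\infty$-structure $\eA_0$ on $A$.

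The central computation is that the $\Gm$-weight on $H^\ast(\eA_0)=A$ is a nonzero linear function of the cohomological degree. The degree zero part $A^0=\End(\eE_0)$ sits in weight zero because the construction of $\eE$ is $x_0$-independent. For the top part, Serre duality on the proper stack $\eY_0$ of dimension $n-1$ identifies $\Ext^{n-1}(\eS_0,\eS_0)$ with the dual of $\Hom(\eS_0,\eS_0\otimes\omega_{\eY_0})$, and the chosen trivialization of $\omega_{\eY_0}$ is induced from the top form $dx_0\wedge\cdots\wedge dx_n$ on $\bA^{n+1}$, which has nonzero $\Gm$-weight since only $dx_0$ transforms nontrivially. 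It follows that $\Ext^{n-1}(\eS_0,\eS_0)$ sits in a nonzero weight, and because $A$ is concentrated in degrees $0$ and $n-1$, the weight is proportional to the cohomological degree with nonzero slope.

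Finally, for any homogeneous inputs $a_1,\ldots,a_d$, equivariance of $\mu^d$ equates the output weight with the sum of input weights, while its cohomological degree is $|a_1|+\cdots+|a_d|+2-d$. Combined with the proportionality between weight and degree, this equality is possible only if $d=2$, so $\mu^d=0$ for $d\geq 3$. The main obstacle is arranging the equivariant dg enhancement and the equivariant minimal-model transfer rigorously, particularly choosing contracting homotopies that respect the weight decomposition; once this framework is in place, the Serre duality weight computation and the degree-counting conclusion close the proof.
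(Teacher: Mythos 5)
Your proposal is correct and follows essentially the same route as the paper: fix the $\Gm$-action scaling only $x_0$, arrange a $\Gm$-equivariant structure on $\eS_0$ so that $\End^0$ has weight zero and $\End^{n-1}$ has weight one (the paper gets this from the weight of $\omega_{\bA^{n+1}}(\chi)$, you via Serre duality and the volume form --- the same mechanism), use reductivity of $\Gm$ to transfer equivariantly to a minimal model, and conclude $\mu^d=0$ for $d\neq 2$ by comparing the degree shift $2-d$ with the proportionality of weight and cohomological degree. The step you flag as the main obstacle is handled exactly as you suggest, by linear reductivity of $\Gm$.
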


\begin{proof}
We fix a $\bGm$-equivariant structure
on $\eS_0$
with respect to the $\bGm$-action
$(x_0,x_1,\ldots,x_n) \mapsto (\alpha x_0,x_1,\ldots,x_n)$
on $\bA^{n+1}$
in such a way that
$\End^0(\eS_0) \cong \End^0(E)$ is $\bGm$-invariant
(this is possible since $\eS_0$ is obtained
by push-forward from an object
on the $\bGm$-invariant subspace).
Note that
$
 \omega_{\bA^{n+1}}(\chi)
$
is isomorphic to $\cO_{\bA^{n+1}}$ as a $\Gamma$-module,
but has weight 1 with respect to the $\Gm$-action.
It follows that the weight for the $\Gm$-action
on $\End^{n-1}(\eS_0) \cong \lb \End^0(E) \rb^\dual$ is one.
This shows that the cohomological degree on the $\bN$-graded algebra $\End^*(\eS_0)$
is $(n-1)$ times the $\Gm$-weight.
Since the group $\Gm$ is reductive,
the chain homotopy to transfer the dg structure on $\dgend(\eS_0)$
to the minimal model $\eA_0$ can be chosen to be $\Gm$-equivariant,
so that the resulting $A_\infty$-operations are $\Gm$-equivariant.
Since the $A_\infty$-operation $\mu^d$
has the cohomological degree $2-d$
and the cohomological degree is proportional to the $\Gm$-weight,
one must have $\mu^d = 0$ for $d \ne 2$.
\end{proof}

As a result,
we have an isomorphism
\begin{align} \label{eq:HH}
 \HH^*(A) \cong \HH^*(\eY_0)
\end{align}
of graded vector spaces.
Moreover,
the proof of \pref{th:formal} shows that
the `cohomological degree minus length' grading on the left hand side
is mapped to $(n-1)$ times the weight of the $\bGm$-action.

\section{Moduli of $A_\infty$-structures}
 \label{sc:moduli_K3}

We prove \pref{th:main} in this section.

\begin{proof}[Proof of \pref{th:main}]
We use the same notations as in \pref{sc:generator}.
\pref{cr:HH0} and \pref{eq:HH} together with \cite[Corollary 3.2.5]{Pol}
shows that the moduli functor of $A_\infty$-structures on $A$
is represented by an affine scheme $\cU_\infty(A)$.
We define the morphism \pref{eq:main}
as the classifying morphism
for the family $\eA$ of minimal $A_\infty$-structures on $A$ over $U$.
We consider the $\bGm$-action
on $\scrY$ as in \pref{eq:Gm-action_on_Y},
and equip $\eS$ with the $\bGm$-equivariant structure
such that $\End(\eS)$ is $\bGm$-equivariantly isomorphic to $A \otimes \bsk$,
where the $\bGm$-weight on $A$ is proportional to the cohomological grading
as in the proof of \pref{th:formal}.
Then the dg algebra $\dgend(\eS)$ is also $\bGm$-equivariant,
and so is the $A_\infty$-algebra $\eA$.
This means that the morphism \pref{eq:main} is $\bGm$-equivariant.


In order to prove that $\varphi$ is an isomorphism,
first assume that $d_0 = 1$
and $G \coloneqq \Gamma/\phi(\bGm)$ is the trivial group.
Recall from \cite[Section (A.5)]{MR761312}
that an \emph{$\Rbar$-polarized scheme}
consists of a projective scheme $Y$,
an ample Weil divisor $X \subset Y$,
and an isomorphism
$R/t R \cong \Rbar$
of graded $\bfk$-algebras,
where
$
 R \coloneqq \bigoplus_{i=0}^\infty H^0(\cO_Y(i X))
$
and
$
 t \in R_1
$
is the element corresponding to 1.
It is shown in \cite[Proposition A.6]{MR761312}
that
$
 U
$
is the fine moduli space of $\Rbar$-polarized schemes,
and the universal family is given by the coarse moduli scheme $\cY$
of $\scrY$.
We will show that
one can reconstruct
the family
$
 \cY
$
of $\Rbar$-polarized schemes
from the family $\eA$ of $A_\infty$-algebras.
Then the fine moduli interpretation of $U$
gives a morphism $\psi$
from the image of $\varphi$ to $U$
such that $\psi \circ \varphi = \id_{U}$.
This implies that the map
on tangent spaces induced by $\varphi$ is an injection,
and hence an isomorphism
since
$\dim U = \dim \HH^2(A)_{<0} \ge \dim \cU_\infty(A)$.
Since $\varphi$ is a $\Gm$-equivariant morphism
from an affine space to an affine scheme with good $\Gm$-actions
inducing an isomorphism on tangent spaces,
it is an isomorphism of schemes.

In order to reconstruct the family
$
 \cY \to U
$
of
schemes
from the family $\eA$ of $A_\infty$-algebras,
first note
from \pref{th:generation}
that $\cO_\scrY(i)$ for any $i \in \bZ$
can be described
as a particular object
obtained from the generator $\eS$
by taking shifts, cones, and direct summands.
This allows one to reconstruct the $\bZ$-algebra
$\lb \Hom^0(\cO_{\scrY}(i), \cO_{\scrY}(j)) \rb_{i,j \in \bZ}$
up to isomorphism
from $\eA$.
Recall that
\begin{itemize}
 \item
a \emph{$\bZ$-algebra} 
as defined in \cite{MR1230966}
is a category whose set of objects is identified with $\bZ$,
 \item
a module over a $\bZ$-algebra $C$
is a contravariant functor
from $C$ to the category of vector spaces,
 \item
the category
$\operatorname{Gr} C$
of $C$-modules is a Grothendieck category,
 \item
a $C$-module is \emph{torsion}
if it is a colimit of modules $M$
satisfying $M(i)=0$ for $i \ll 0$,
 \item
the category $\Qgr C$
is defined
as the quotient
$\operatorname{Gr} C
/
\Tor C
$
of
$\operatorname{Gr} C$
by the full subcategory
$\Tor C$
consisting of torsion modules,
\item
a $\bZ$-graded algebra
$
 B = \bigoplus_{i \in \bZ} B_i
$
gives a $\bZ$-algebra
$
 \Bv = \bigoplus_{i,j \in \bZ} \Bv_{ij}
$
by
$
 \Bv_{ij} = B_{i-j},
$
and
 \item
one has
$
\Qgr B \cong \Qgr \Bv
$
for any $\bZ$-graded algebra $B$.
\end{itemize}
See e.g.~\cite[Section 2]{MR2836401} and references therein
for more on $\bZ$-algebras and their $\Qgr$.
Note that
$
 \Hom^0(\cO_{\scrY}(i), \cO_{\scrY}(j))
  \cong \Hom^0(\cO_{\cY}(i), \cO_{\cY}(j))
$
for any $i,j \in \bZ$.
The abelian category $\Qcoh \cY$
can be reconstructed from the $\bZ$-algebra
$
 \lb \Hom^0(\cO_{\cY}(i), \cO_{\cY}(j)) \rb_{i,j \in \bZ}
$
(since $\Qcoh \cY$ is the $\Qgr$ of the graded ring
$\bigoplus_{i \in \bZ} H^0(\cO_\cY(i))$,
and
$
 \lb \Hom^0(\cO_{\cY}(i), \cO_{\cY}(j)) \rb_{i,j \in \bZ}
$
is isomorphic to the $\bZ$-algebra
associated with this graded ring),
which in turn allows the reconstruction of $\cY$
by the Gabriel--Rosenberg reconstruction theorem.
This allows us to recover the monoidal structure on $\Qcoh \cY$,
and hence the $\bZ$-graded ring
$\bigoplus_{i \in \bZ} H^0(\cO_\cY(i))$,
from the $A_\infty$-algebra $\eA$.

Since $\coh \eX$ is a semiorthogonal summand of
$\emf{\bA^n}{\Gamma}{\w}$
by \cite[Theorem 16]{MR2641200}
and the isomorphism $\End E \cong A^0$ is given,
one has a fixed isomorphism
of the homogeneous coordinate ring
of the divisor $X \times U$ at infinity
with $\Rbar \otimes \cO_{U}$.
This concludes the reconstruction
of the family of $\Rbar$-polarized schemes
from the family of $A_\infty$-algebras
in the case when $d_0 = 1$ and $\Gamma = \phi(\bGm)$.

When $\Gamma \supsetneq \phi(\bGm)$,
then $G \coloneqq \Gamma / \phi(\bGm)$ acts on $\Rbar$,
and hence on $X$.
The affine space $U$,
defined in \pref{sc:whsing}
as the fixed locus of the natural $G$-action
on the positive part of $\Utilde$,
is the fine moduli scheme of $\Rbar$-polarized schemes
equipped with a $G$-action
extending that on $X$
by \cite[Theorem A.2]{MR761312}.
Now one can run exactly the same argument as above
to show that $\varphi$ is an isomorphism.

The generalization to the case where $d_0 \ne 1$ is completely parallel
to the generalization to the case
where
$\Gamma \supsetneq \phi(\bGm)$
given above;
if one introduces a variable $t$ of degree 1
and set $x_0 = t^{d_0}$,
then $U$ is the fixed locus of the $\bmu_{d_0}$-action
on the positive part of $\Utilde$
induced by
$
 \bmu_{d_0} \ni \zeta \colon (x_1, \ldots, x_n)
  \mapsto (\zeta^{d_1} x_1, \ldots, \zeta^{d_n} x_n).
$
\end{proof}

\section{Hochschild cohomology of the Fukaya category of the Milnor fiber}
\label{sc:HHSH}

For an object $a$ of an $A_\infty$-category $\scrA$,
the \emph{left Yoneda module}
$
\Yl_a \in \Mod \scrA^\op
$
is defined on objects by
\begin{align}
 \Yl_a(x) = \hom_{\scrA}(a, x).
\end{align}
The \emph{right Yoneda module}
$
\Yr_a \in \Mod \scrA
$
is defined similarly by
\begin{align}
 \Yr_a(x) = \hom_{\scrA}(x, a).
\end{align}
The functors
\begin{align}
  \Yl \colon \scrA^\op \to \Mod \scrA^\op, \quad
  a \mapsto \Yl_a
\end{align}
and
\begin{align}
  \Yr \colon \scrA \to \Mod \scrA, \quad
  a \mapsto \Yr_a
\end{align}
are full and faithful
by the Yoneda lemma.

An $(\scrA, \scrB)$-bimodule $X$
defines functors
\begin{align}
  (-) \otimes_\scrA X \colon \Mod \scrA \to \Mod \scrB
\end{align}
and
\begin{align}
  X \otimes_\scrB (-) \colon \Mod \scrB^\op \to \Mod \scrA^\op.
\end{align}

For a functor $F \colon \scrA \to \scrB$,
the \emph{graph bimodule} $\Gamma_F$
is the $(\scrA, \scrB)$-bimodule
defined on objects by
\begin{align}
 \Gamma_F(b, a) = \hom_\scrB(b, F(a))
\end{align}
for $a \in \scrA$ and $b \in \scrB$.
One has
\begin{align}
  \Yr_a \otimes_\scrA \Gamma_F \simeq \Yr_{F(a)}
\end{align}
and
\begin{align} \label{eq:Gamma_F otimes (-)0}
  \lb \Gamma_F \otimes_\scrB \Yl_b \rb (a) \simeq \hom_\scrB(b,F(a)).
\end{align}
Note that \pref{eq:Gamma_F otimes (-)0} implies
\begin{align} \label{eq:Gamma_F otimes (-)}
  \Gamma_F \otimes_\scrB \Yl_{F(a)} \simeq \Yl_a
\end{align}
if $F$ is full and faithful.


The Hochschild cohomology of an $A_\infty$-category $\scrA$
is defined as the endomorphism of the \emph{diagonal bimodule},
which in turn is defined as the graph bimodule
$
 \Delta_\scrA \coloneqq \Gamma_{\id_{\scrA}}
$
of the identity functor $\id_{\scrA}$.



\begin{thm}[{\cite[Theorem 4.6.b)]{Keller}}] \label{th:Keller}
Let $X$ be an $(\scrA, \scrB)$-bimodule.
If the functors
\begin{align} \label{eq:Keller's assupmtion 1}
  \Yr (-) \otimes_\scrA X \colon \scrA \to \Mod \scrB
\end{align}
and
\begin{align} \label{eq:Keller's assumption 2}
  X \otimes_\scrB \Yl(-) \colon \scrB^\op \to \Mod \scrA^\op
\end{align}
are full and faithful,
then there exists an isomorphism
\begin{align}
  \HH^*(\scrA) \simto \HH^*(\scrB)
\end{align}
of graded vector spaces.
\end{thm}

See \cite{Keller} and references therein
for more on history, background, and enhancement of \pref{th:Keller}.

Let $\Vv$ be the Milnor fiber
of a weighted homogeneous polynomial
$\wv \colon \bC^n \to \bC$
with an isolated critical point at the origin.
The Fukaya category $\cF \lb \Vv \rb$ is a full subcategory
of the wrapped Fukaya category $\cW \lb \Vv \rb$.
Let $\lb S_i \rb_{i=1}^\mu$ be a distinguished basis of vanishing cycles,
and $\scrS$ be the full subcategory of $\cF \lb \Vv \rb$
consisting of $\lb S_i \rb_{i=1}^\mu$.
We assume
\begin{align} \label{eq:degree_condition2}
 \dv_0 \coloneqq \hv - \dv_1 - \cdots - \dv_n \neq  0.
\end{align}
It is shown in \cite[4.c]{seidelgraded} that
\begin{align} \label{eq:grade}
 \lb T_{S_1} \circ \cdots \circ T_{S_\mu} \rb^{\hv} = [2\dv_0],
\end{align}
where $T_S$ is the \emph{twist functor}
defined on objects
as the cone of the evaluation morphism;
\begin{align}
  x \mapsto T_S(x) \coloneqq \Cone \lb \hom(S,x) \otimes S \xto{\ev} x \rb.
\end{align}
It follows by \cite[Lemma 5.4]{MR3364859} that
$\scrS$ split-generates $\cF \lb \Vv \rb$,
so that
\begin{align}
 \cF \lb \Vv \rb \cong \perf \scrS
\end{align}
and hence
\begin{align}
 \HH^* \lb \cF \lb \Vv \rb \rb \cong \HH^* \lb \scrS \rb.
\end{align}

\begin{thm} \label{th:HH duality}
Under the assumption \pref{eq:degree_condition2},
one has an isomorphism
\begin{align}
 \HH^* \lb \cW \lb \Vv \rb \rb \cong \HH^* \lb \scrS \rb.
\end{align}
\end{thm}

\pref{th:HH duality} fails without \pref{eq:degree_condition2};
one can take
$
 \wv = x^2 + y^2
$
as a counter-example.

Recall that a Liouville manifold is said to be \emph{non-degenerate}
if there is a finite collection of Lagrangians
such that the open-closed map
from the Hochschild homology of the full subcategory of the wrapped Fukaya category
consisting of them
to the symplectic cohomology
hits the identity element
\cite{MR2737980}.
Any Weinstein manifold is non-degenerate
\cite{1712.09126,GPS}. 

\begin{thm}[{\cite{MR3121862}}] \label{th:Ganatra}
If $\Vv$ is a non-degenerate Liouville manifold,
then one has
\begin{align}
  \SH^* \lb \Vv \rb
  \cong \HH^* \lb \cW \lb \Vv \rb \rb.
\end{align}
\end{thm}

\pref{th:HH duality}
combined with \pref{th:Ganatra}
gives a proof of \cite[Conjecture 4]{seidelICM}
in our case:

\begin{cor} \label{cr:seidelconj}
Under the assumption \pref{eq:degree_condition2},
one has an isomorphism
\begin{align}
 \SH^* \lb \Vv \rb \cong \HH^* \lb \cF \lb \Vv \rb \rb.
\end{align} 
\end{cor}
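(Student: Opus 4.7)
The corollary is designed to follow by stringing together three results that have just been established or cited. The plan is to exhibit a chain of isomorphisms
\begin{align*}
\SH^*(\Vv) \;\cong\; \HH^*(\cW(\Vv)) \;\cong\; \HH^*(\cS) \;\cong\; \HH^*(\cF(\Vv)),
\end{align*}
each link of which invokes one ingredient. I would set this up explicitly rather than leave the reader to assemble it.

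First I would justify the leftmost isomorphism by appealing to \pref{th:Ganatra}. Because $\Vv$ is the Milnor fiber of a weighted homogeneous polynomial with an isolated critical point, it carries a Weinstein structure, and by the result of \cite{1712.09126} it is non-degenerate. Hence the second part of \pref{th:Ganatra} applies and yields $\HH^*(\cW(\Vv)) \cong \SH^*(\Vv)$. The middle isomorphism $\HH^*(\cW(\Vv)) \cong \HH^*(\cS)$ is exactly the content of \pref{th:duality}, which is where the hypothesis $\dv_0 \ne 0$ is used.

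The rightmost isomorphism is the Morita invariance statement. I would point out that the subcategory $\cS \subset \cF(\Vv)$ of vanishing cycles split-generates $\cF(\Vv)$, by the argument recalled just before \pref{th:duality} (using \cite[Lemma 5.4]{MR3364859}), so that $\cF(\Vv) \simeq \perf \cS$ as pretriangulated $A_\infty$-categories. In particular $\cF(\Vv)$ sits between $\cS$ and $\Mod \cS$ as a full subcategory, so \pref{cr:HH_isom} yields $\HH^*(\cF(\Vv)) \cong \HH^*(\cS)$.

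Composing these three isomorphisms gives the claim. There is no real obstacle here, provided one has recorded \pref{th:duality} (which is where all the analytic and Koszul-duality content is concentrated) and the non-degeneracy of Weinstein manifolds; the present corollary is essentially a bookkeeping step that packages those inputs into the form of \cite[Conjecture 4]{seidelICM}.
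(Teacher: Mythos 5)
Your proposal is correct and follows essentially the same route as the paper: the paper obtains the corollary by combining \pref{th:Ganatra} (with non-degeneracy of Weinstein manifolds from \cite{1712.09126}) for $\SH^*(\Vv) \cong \HH^*(\cW(\Vv))$, \pref{th:duality} for $\HH^*(\cW(\Vv)) \cong \HH^*(\cS)$, and the split-generation of $\cF(\Vv)$ by $\cS$ via \cite[Lemma 5.4]{MR3364859} for $\HH^*(\cF(\Vv)) \cong \HH^*(\cS)$. Your write-up merely makes the chain of isomorphisms explicit.
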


To prove \pref{th:HH duality},
we apply \pref{th:Keller}
to the case where
$\scrA = \scrS$,
$\scrB = \cW \lb \Vv \rb$, and
$X$ is the graph of the inclusion functor.
To show that
the functor \pref{eq:Keller's assumption 2}
is full and faithful,
we use \pref{pr:key} below:

\begin{prop} \label{pr:key}
Let $\scrA$ be an $A_\infty$-category
whose set of objects
consists of finitely many spherical objects
$
S_1, \ldots, S_\mu,
$
and $\scrB$ be another $A_\infty$-category
equipped with a full and faithful functor
$
F \colon \scrA \to \scrB.
$
Assume the following:
\begin{enumerate}[(i)]
\item \label{as:hom with S is bounded}
For any $S \in \scrA$ and any $L \in \scrB$,
the complex $\hom(L,F(S))$ of $\bfk$-modules is perfect.
\item \label{as:spherical twists generate a shift}
There exist a positive (resp. negative) integer $m$
and an isomorphism
\begin{align} \label{eq:grade2}
T_{F(S_\mu)} \circ \cdots \circ T_{F(S_1)} \simeq [m]
\end{align}
of endofunctors on $\scrB$.
\item \label{as:hom in B is bounded below}
For any $K,L \in \scrB$,
the complex $\hom(K,L)$ is bounded below (resp. above).
\end{enumerate}
Then
the functor
\begin{align}
  \Gamma_F \otimes_\scrB \Yl(-) \colon \scrB^\op \to \Mod \scrA^\op
\end{align}
is full and faithful.
\end{prop}

\begin{proof}
Set
\begin{align}
  G \coloneqq \Gamma_F \otimes_\scrB (-) \colon \Mod \scrB^\op \to \Mod \scrA^\op.
\end{align}
We henceforth sometimes omit $\Yl$ and $F$
to avoid unnecessarily heavy notations.
Recall that the \emph{dual twist functor},
defined on objects as the shifted cone of the coevaluation morphism
\begin{align}
  x \mapsto T^\dual_S(x) \coloneqq \Cone \lb x \xto{\coev} \hom(x,S)^\dual \otimes S \rb[-1],
\end{align}
is inverse to the twist functor.
For any $K \in \scrB$,
one has distinguished triangles
\begin{equation}
\begin{tikzcd}[column sep = tiny]
 \cdots \quad T^\dual_{S_{\mu-1}} \circ T^\dual_{S_\mu}(K) \arrow[rr] && T^\dual_{S_\mu}(K) \arrow[rr] \arrow[dl] &&  K \arrow[dl] \\
& \hom \lb T^\dual_{S_\mu}(K), S_{\mu-1} \rb^\dual \otimes S_{\mu-1} \arrow[ul, dotted] 
&& \hom \lb K, S_\mu \rb^\dual \otimes S_\mu \arrow[ul, dotted]
\end{tikzcd}
\end{equation}
in $\Mod \scrB$.
The octahedral axiom
and
\pref{eq:grade2}
give a distinguished triangle
\begin{align} \label{eq:phi}
  K[-m] \xto{\phi} K \to K_1 \to K[-m+1]
\end{align}
for some $K_1 \in \perf \scrA$.
The shift
\begin{align}
  K[-2m] \xto{\phi[-m]} K[-m] \to K_1[-m] \to K[-2m+1]
\end{align}
of \pref{eq:phi}
and the octahedral axiom
give an object $K_2 \in \perf \scrA$
and distinguished triangles
\begin{align}
  K[-2m] \xto{\phi \circ \phi[-m]} K \to K_2 \to K[-2m+1]
\end{align}
and
\begin{align}
  K_1[-m] \to K_2 \xto{\psi_1} K_1 \to K_1[-m+1].
\end{align}
By iteration,
one obtains a sequence
\begin{align}
  \cdots \xto{\psi_2} K_2 \xto{\psi_1} K_1
\end{align}
and distinguished triangles
\begin{align} \label{eq:eta_i}
  K[-i m] \xto{\phi \circ \cdots \circ \phi[-(i-1)m]} K \xto{\eta_i} K_i \to K[-im+1]
\end{align}
and
\begin{align} \label{eq:psi_i}
  K_1[-im] \to K_{i+1} \xto{\psi_i} K_i \to K_1[-im+1]
\end{align}
for $i=1,2,\ldots$.
For any $S \in \scrA$
and any $j \in \bZ$,
one has isomorphisms
\begin{align}
  (-) \circ \psi_i \colon \hom^j(K_i, S) \simto \hom^j(K_{i+1},S)
\end{align}
and
\begin{align}
  (-) \circ \eta_i \colon \hom^j(K_i,S) \simto \hom^j(K,S)
\end{align}
for $i \gg 1$
because of \pref{as:hom with S is bounded},
so that
\begin{align}
  \colim_i \hom(K_i,S) \simeq \hom(K,S)
\end{align}
and hence
\begin{align}
  \colim_i \Yl(K_i) \simeq G \circ \Yl(K)
\end{align}
in $\Mod \scrA^\op$ by \pref{eq:Gamma_F otimes (-)}.
Now for any $L \in \scrB$,
one has
\begin{align}
\hom_{\Mod \scrA^\op} \lb G \circ \Yl(K), G \circ \Yl(L) \rb
&\simeq
\hom_{\Mod \scrA^\op} \lb \colim_i \Yl(K_i), G \circ \Yl(L) \rb
\\
&\simeq
\lim_i \hom_{\Mod \scrA^\op} \lb \Yl(K_i), G \circ \Yl(L) \rb
\label{eq:definition of colimit} \\
&\simeq
\lim_i \hom_{\Mod \scrB^\op} \lb \Yl \circ F(K_i), \Yl(L) \rb
\label{eq:left adjunction} \\
&\simeq
\lim_i \hom_{\scrB^\op} \lb F (K_i), L \rb
\label{eq:the Yoneda embedding is full and faithful} \\
&\simeq
\hom_{\scrB^\op} \lb K, L \rb,
\label{eq:lim}
\end{align}
where \pref{eq:lim} comes from the isomorphisms 
\begin{align}
  \psi_i \circ (-) \colon \hom^j (L,F(K_{i+1})) \simto \hom^j (L,F(K_i)) 
\end{align}
and
\begin{align}
  \eta_i \circ (-) \colon \hom^j (L,K) \simto \hom^j (L,F(K_i)) 
\end{align}
for any $j$ and sufficiently large $i$ depending on $j$,
which in turn come from
\pref{as:hom in B is bounded below}
using the distinguished triangles  \pref{eq:eta_i} and \pref{eq:psi_i}.
\end{proof}

\pref{as:hom in B is bounded below}
in \pref{pr:key}
is satisfied in our case
by \pref{lm:bounded} below.

\begin{lemma} \label{lm:bounded}
Let $\Vv$ be the Milnor fiber
of a weighted homogeneous isolated hypersurface singularity.
If $\dv_0$ is positive (resp.~negative),
then for any $K,L \in \cW \lb \Vv \rb$,
the complex $\hom(K,L)$ is bounded below (resp.~above).
\end{lemma}

\begin{proof}
By applying a small Hamiltonian isotopy to $K$ and $L$
if necessary,
one may assume that
a basis of $\hom(K,L)$ consists of intersection points
in the interior of the Liouville domain
and Hamiltonian chords in the symplectization end.
The former is finite and hence their Maslov indices are bounded.
The latter correspond bijectively
to Reeb chords between Legendrians on the contact boundary.
The contact boundary can be identified
with the link of the weighted homogeneous singularity
in such a way that
the Reeb flow on the link is the circle action
acting on the coordinates with weights $(\dv_1,\dv_2,\ldots,\dv_n)$
(see \cite[4.c]{seidelgraded}).
The Reeb flow is periodic;
the time one Reeb flow is the identity,
corresponding to going around the $S^1$ once.
We say a Reeb chord is \emph{short} (resp.~\emph{long})
if the length is less than or equal to (resp.~greater than) one.
Because the Reeb flow is periodic, every long Reeb chord is a concatenation
of a short Reeb chord and a Reeb orbit.
The set of Reeb chords form non-degenerate Morse--Bott components, and
only finitely many components consists of short chords.
Any component consisting of long chords is obtained from
a component consisting of short chords
by concatenating Reeb orbits.
In \cite[Lemma 4.15]{seidelgraded},
the index cost of going around the circle once was computed to be $2\dv_0$.
Since $\dv_0 \neq 0$ by assumption,
additivity of Maslov index implies that the complex $\hom(K,L)$ is bounded below (resp.~above)
if $\dv_0$ is positive (resp.~negative).
\end{proof}

\begin{corollary}
  Let $\Vv$ be the Milnor fiber
  of a weighted homogeneous isolated hypersurface singularity
  satisfying \pref{eq:degree_condition2}.
  Then there exists an isomorphism
  \begin{equation} \label{eq:dualityiso}
    \HH^*(\cW(\Vv)) \cong \HH^*(\cF(\Vv)).
  \end{equation}
\end{corollary}

\begin{remark} \label{rm:embedding}
\pref{pr:key} and \pref{lm:bounded} give a full and faithful functor
\(
  \cW \lb \Vv \rb^\op \to \Mod \cF \lb \Vv \rb^\op.
\)
By using right modules instead of left modules,
one can obtain
a full and faithful functor
\(
  \cW \lb \Vv \rb \to \Mod \cF \lb \Vv \rb.
\)
Note that there exists
a full and faithful functor
\(
  \coh X \to \Qcoh X \simeq \Mod \lb \perf X \rb
\)
for a perfect stack $X$.
\end{remark}

\begin{rmk}
An isomorphism
\begin{align}
  \HH^*(\coh X) \cong \HH^*(\perf X)
\end{align}
similar to \pref{eq:dualityiso}
exists for a derived stack $X$
of finite type over a perfect field
\cite[Corollary B.5.1.(i)]{1101.5834}.
\end{rmk}

\begin{rmk}
Combined with the isomorphism
\begin{align}
  \HH^* \lb \cW \lb \Vv \rb \rb \cong \HH_{*-n} \lb \cW \lb \Vv \rb \rb
\end{align}
induced by a smooth Calabi--Yau structure on $\cW(\Vv)$
and the isomorphism
\begin{align}
  \HH^* \lb \cF \lb \Vv \rb \rb \simeq \HH_{*-n} \lb \cF \lb \Vv \rb \rb^\dual
\end{align}
induced by a proper Calabi--Yau structure on $\cF(\Vv)$,
\pref{eq:dualityiso}
gives an isomorphism
\begin{align} \label{eq:dualityiso2}
  \HH_* \lb \cW \lb \Vv \rb \rb \simeq \HH_* \lb \cF \lb \Vv \rb \rb^\dual.
\end{align}
The appearance of the linear dual in \pref{eq:dualityiso2}
is consistent with the fact that
$\cF \lb \Vv \rb$ and $\cW \lb \Vv \rb$ are not Morita equivalent.
\end{rmk}

\begin{thm} \label{th:duality}
In addition to \pref{eq:degree_condition2},
assume that
the full exceptional collection $\lb S_i \rb_{i=1}^\mu$
in $\cF \lb \wv \rb$ is strong,
and that
there exists a sequence
$(L_i)_{i=1}^\mu$
of objects
generating $\cW \lb \Vv \rb$
such that
\begin{align} \label{eq:dualitySL}
  \dim_{\bfk} \hom^*(L_i, S_j) = \delta_{ij},
  \qquad 1 \le i,j \le \mu,
\end{align}
where $\delta_{ij}$ is the Kronecker delta.
Then there exist equivalences
\begin{align}
  \Funex \lb \cF \lb \Vv \rb, \perf \bfk \rb \simeq \cW \lb \Vv \rb,
   \label{eq:Fun(F,k)=W}\\
  \Funex \lb \cW \lb \Vv \rb, \perf \bfk \rb \simeq \cF \lb \Vv \rb.
   \label{eq:Fun(W,k)=F}
\end{align}
\end{thm}

\begin{proof}
Let
$
\scrF \coloneqq \dgend \lb \bigoplus_{i=1}^\mu S_i \rb
$
and
$
\scrW \coloneqq \dgend \lb \bigoplus_{i=1}^\mu L_i \rb
$
be the endomorphism $A_\infty$-algebras
of the generators,
which are augmented
over the semisimple ring
$
\bbk \coloneqq \bfk^{\times \mu}
$
because of \pref{eq:dualitySL}.
The assumption \pref{eq:dualitySL}
should be understood as a Koszul duality
between $\scrF$ and $\scrW$;
\begin{align}
  \scrF &\simeq \hom_\scrW(\bbk,\bbk),
  \label{eq:KoszulFW} \\
  \scrW^\op &\simeq \hom_{\scrF^\op}(\bbk,\bbk).
  \label{eq:KoszulWF}
\end{align}
The quasi-isomorphism \pref{eq:KoszulFW} is obtained
as the composition of the sequence
\begin{align}
  \scrF
  &\coloneqq \dgend_{\cF \lb \Vv \rb} \lb \bigoplus_{i=1}^\mu S_i \rb \\
  &\simeq \dgend_{\cW \lb \Vv \rb} \lb \bigoplus_{i=1}^\mu S_i \rb \\
  &\simeq \dgend_{\scrW} \lb \bbk \rb \label{eq:KoszulFW last step}
\end{align}
of quasi-isomorphisms,
where
\pref{eq:KoszulFW last step}
comes from
the fact that
the functor
\begin{align}
  \hom_{\cW \lb \Vv \rb} \lb \bigoplus_{i=1}^\mu L_i, - \rb
  \colon \cW \lb \Vv \rb \to \Mod \scrW
\end{align}
is fully faithful
since $\bigoplus_{i=1}^\mu L_i$ generates $\cW \lb \Vv \rb$
and sends $\bigoplus_{i=1}^\mu S_i$
to $\bbk$.
The quasi-isomorphism
\pref{eq:KoszulWF}
is obtained similarly
using \pref{pr:key}.

It follows from \cite[Theorem 7.2]{MR2276263} that
the $\bfk$-linear $\infty$-category of exact functors
on the left hand side of \pref{eq:Fun(F,k)=W}
is equivalent
to the full subcategory of $\Mod \scrF$
consisting of $\scrF$-modules
which are perfect as $\bfk$-modules.
Since the cohomology algebra of $\scrF$ is
the trivial extension algebra
of the total morphism algebra
of a strong exceptional collection,
the augmentation ideal of
$
\scrF \simeq \dgend_{\scrW} \lb \bbk \rb
$
is nilpotent.
It follows that
the full subcategory of $\Mod \scrF$
consisting of $\scrF$-modules
which are perfect as $\bfk$-modules
is generated by $\bbk$,
and hence is equivalent to $\perf \scrW \simeq \cW \lb \Vv \rb$,
which is generated by $\bigoplus_{i=1}^\mu L_i$.

For any
$
K \in \Funex \lb \cW \lb \Vv \rb, \perf \bfk \rb
$
(which can be identified with a $\scrW$-module
which is perfect as a $\bfk$-module),
the smoothness of $\cW \lb \Vv \rb$
shown in \cite[Theorem 1.2]{MR3121862}
implies that
the cohomology of
$
\dgend(K)
$
is bounded.
If follows that the morphism
$
\phi \circ \cdots \circ \phi[-(i-1)m] \colon K[-im] \to K
$
in \pref{eq:eta_i}
is zero for $i \gg 1$,
so that $K$ is a direct summand of an object $K_i$
of $\cF \lb \Vv \rb$,
and hence $K$ itself is an object $\cF \lb \Vv \rb$
by our convention that all Fukaya categories are idempotent-completed.
This shows \pref{eq:Fun(W,k)=F},
and \pref{th:duality} is proved.
\end{proof}

\begin{rmk} Koszul duality between endomorphism algebras of
generators of compact and wrapped Fukaya category have been observed in
\cite{EtLe,EkLe, Li,1907.09257, 2010.10114, 2104.10050}. \end{rmk}

\section{Symplectic cohomology of the Milnor fiber}
 \label{sc:SH}
 
In this section,
we recall a spectral sequence converging to $\SH^* \lb \Vv \rb$
associated to a normal crossings compactification of $\Vv$
due to \cite{mcleantalk, 1811.03609}.
It is based on a standard model of the Reeb flow
in a neighborhood of compactification divisor and can be perceived
as an elaborate version of the standard Morse-Bott model
discussed in \cite{seidelbiased} when the compactification divisor is smooth.
See also \cite{ganpom} and \cite{diolis} for related results.

Let $\Ytilde$ be a smooth projective variety
containing an affine variety with $c_1 \lb \Vv \rb=0$
in such a way that
$
 \Dv \coloneqq \Ytilde \setminus \Vv
$
is a normal crossing divisor;
\begin{align}
 \Dv = \bigcup_{i \in I} \Dv_i.
\end{align} 
For $J \subset I$, we set $\Dv_{J} = \bigcap_{i \in J} \Dv_i$, and also set $\Dv_\emptyset = \Vv$. 

Choose a sequence $\kappa = (\kappa_i)_{i \in I}$ of positive integers
such that the divisor
$
 \sum_{i\in I} \kappa_i \Dv_i
$
on $\Ytilde$ is ample.
Let $(c_i)_{i \in I}$ be another sequence of integers
such that
$
 \sum_{i \in I} c_i \Dv_i
$
is linearly equivalent to the canonical divisor of $\Ytilde$.
When $\Ytilde$ is a Calabi--Yau manifold,
one can set $c_i=0$ for all $i \in I$.

Still following \cite{mcleantalk,1811.03609},
for each $J \subset I$,
we let $N\Dv_{J}$ be a small tubular neighborhood of $\Dv_J$
such that $N\Dv_{J} \cap \Dv_{J'}$ is a tubular neighborhood of $\Dv_{J \cup J'}$ for all $J' \subset I$.
Moreover, we require that the boundary $\partial N\Dv_{J}$ intersects $\Dv_{J'}$ for all $J' \subset I$.
Next, we let
\begin{align}
 \overset{\circ}{N}\Dv_{J} = N\Dv_{J} \setminus \cup_{i \in I} \Dv_i
\end{align}
be the punctured tubular neighborhood.

\begin{thm}[{\cite{mcleantalk,1811.03609} (see also \cite[Remark 3.17]{ganpom})}]
 \label{th:mcleanss} There is a cohomological spectral sequence converging to $\SH^*(\Vv)$ with $E_1$-page given by
\begin{align}
E_{1}^{p,q} = \bigoplus_{ \lc (k_i) \in \bZ_{\ge 0}^I \relmid \sum k_i \kappa_i = - p \rc }
 H^{p+q-2 \sum_{i} k_i(c_i+1)} \lb \overset{\circ}{N}\Dv_{{J}_{(k_i)}} \rb
\end{align}
\end{thm}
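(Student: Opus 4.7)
The plan is to construct the spectral sequence from an action filtration on the symplectic cochain complex, after setting up a standard Liouville model for $\Vv$ dictated by the normal crossings compactification. First I would fix a Hermitian metric on the ample line bundle $\cO_{\Ytilde}(\sum_{i \in I} \kappa_i \Dv_i)$, let $s$ be the defining section of $\sum_{i} \kappa_i \Dv_i$, and use the plurisubharmonic exhaustion $\varphi \coloneqq -\log \|s\|^2$ to equip $\Vv$ with a Liouville structure that agrees with the given one at infinity. In local holomorphic coordinates $(z_i)_{i \in J}$ transverse to a stratum $\Dv_J$, one has $s = \prod_{i \in J} z_i^{\kappa_i} \cdot g$ with $g$ nowhere vanishing, so writing $z_i = r_i e^{\sqrt{-1} \theta_i}$ reduces the Liouville form near $\Dv_J$ to a toric model with leading term $\sum_{i \in J} \kappa_i r_i^2 d\theta_i$ plus contributions along $\Dv_J$.

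Next, on a level set $\{\varphi = R\}$ for $R$ sufficiently large, the Reeb vector field in this local model is a linear combination of the rotations $\partial_{\theta_i}$, so its closed orbits are indexed by multi-indices $(k_i)_{i \in I} \in \bZ_{\ge 0}^I$, where $k_i$ is the winding number around $\Dv_i$. The support $J_{(k_i)} = \lc i \relmid k_i > 0 \rc$ tells us which stratum the orbit hovers near, and the whole Morse--Bott family of geometric orbits for fixed $(k_i)$ is the total space of a torus bundle over $\Dv_{J_{(k_i)}}$ which (once one divides by the Reeb circle action and takes into account the local angular directions) is homotopy equivalent to the punctured tubular neighborhood $\overset{\circ}{N}\Dv_{J_{(k_i)}}$; verifying this identification, in particular at corners with $|J| \ge 2$, is the geometric heart of the argument.

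I would then compute the action and the Conley--Zehnder grading of the Morse--Bott family labeled by $(k_i)$. The action evaluates to $\sum_i k_i \kappa_i$ up to a normalization that is irrelevant for the filtration, giving the filtration index $p = -\sum_i k_i \kappa_i$. For the grading, I would trivialize the canonical bundle using the Calabi--Yau condition $c_1(\Vv)=0$ and compare it, along a meridian linking $\Dv_i$, with a trivialization extending across $\Ytilde$; the discrepancy is measured by the class $c_i$ defined by $K_{\Ytilde} \sim \sum c_i \Dv_i$, and the resulting Conley--Zehnder shift works out to $2 \sum_i k_i (c_i + 1)$, with the $+1$ coming from the unit Maslov contribution of the meridian itself. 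Placing the Morse--Bott cohomology of the orbit family in the appropriate degree then yields exactly $H^{p+q-2\sum_i k_i (c_i+1)}(\overset{\circ}{N} \Dv_{J_{(k_i)}})$ in bidegree $(p,q)$.

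Finally, to assemble the spectral sequence, I would choose a cofinal sequence of admissible Hamiltonians whose quadratic profile at infinity realizes the Reeb orbits above as Morse--Bott critical sets, and filter the Floer complex by symplectic action; passing to the direct limit and the associated graded gives the $E_1$-page as claimed and converges to $\SH^*(\Vv)$ by standard arguments. The chief technical obstacle is the analysis at corners of $\Dv$, where the Reeb flow is a full torus action rather than a circle action: one must build a global Morse--Bott perturbation that resolves these higher-dimensional orbit families into a genuine chain complex, match the auxiliary Morse data on each stratum $\Dv_J$ with the topology of $\overset{\circ}{N}\Dv_J$, and verify compatibility with the action filtration; this is precisely the content carried out in detail in \cite{1811.03609}.
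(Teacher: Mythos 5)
The paper offers no proof of this statement: it is imported verbatim from \cite{mcleantalk} and \cite{1811.03609}, and the surrounding text only describes it as a Morse--Bott model for the Reeb dynamics near the compactification divisor. Your outline --- the Liouville structure from $-\log\|s\|^2$, the Morse--Bott families of Reeb orbits indexed by multiplicity vectors $(k_i)$ with action $\sum_i k_i\kappa_i$ and degree shift $2\sum_i k_i(c_i+1)$ measured against the discrepancies $c_i$, and the action-filtration spectral sequence --- is an accurate summary of exactly the argument carried out in the cited reference \cite{1811.03609}, including your correct identification of the corner analysis (where the orbit families are torus bundles rather than circle bundles) as the main technical point deferred to that paper. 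So your proposal matches the approach the paper relies on; it just makes explicit what the paper treats as a black box.
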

where $J_{(k_i)} = \lc i \in I \relmid k_i \neq 0 \rc$.

Since $\kappa_i$ is positive for all $i$,
for each $p$,
we have $E_1^{p,q} \neq 0$ only for finitely many $q$,
and is a finite sum of finite-dimensional vector spaces.
Moreover, if $c_i>-1$ for all $i$, then the spectral sequence is regular.


We will apply this spectral sequence
to deduce $\SH^1 \lb \Vv \rb =0$,
where $\Vv$ is the Milnor fiber of a weighted homogeneous singularity.

%

\begin{cor} \label{cr:shifts}
Let $\Vv$ be the Milnor fiber of a weighted homogeneous polynomial
with an isolated critical point at the origin, $d_0>0$ 
and $\dim \Vv \ge 2$, admitting a compactification
to a Calabi--Yau manifold by adding a normal crossing divisor.
One has
$\SH^i \lb \Vv \rb=0$ for $i < 0$,
$\SH^0 \lb \Vv \rb=\bC$,
and
$\SH^1 \lb \Vv \rb=0$.
\end{cor}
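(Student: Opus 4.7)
The plan is to apply the spectral sequence of \pref{th:mcleanss} to a Calabi--Yau compactification $(\Ytilde, \Dv)$ of $\Vv$. In the Calabi--Yau case we may take $c_i = 0$ for every $i \in I$, so the $E_1$-page reads
\begin{align}
 E_1^{p,q} = \bigoplus_{\lc (k_i) \in \bZ_{\geq 0}^I \relmid \sum_i k_i \kappa_i = -p \rc}
  H^{p+q-2\sum_i k_i} \lb \overset{\circ}{N}\Dv_{J_{(k_i)}} \rb.
\end{align}
First I would catalog the shape of the $E_1$-page. Since each $\kappa_i$ is positive, the indexing set is empty for $p > 0$, so $E_1^{p,q}$ vanishes in that strip. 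For $p = 0$ the only admissible index is $(k_i) = 0$, giving $J_{(k_i)} = \emptyset$ and $E_1^{0,q} = H^q(\Vv)$. For $p < 0$ at least one $k_i$ is positive, so $\sum_i k_i \geq 1$.

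Next I would restrict attention to total degrees $i \leq 1$. On such a diagonal $p+q = i$, every contribution with $p < 0$ involves $H^{i - 2\sum_i k_i}(\overset{\circ}{N}\Dv_{J_{(k_i)}})$ with $i - 2\sum_i k_i \leq 1 - 2 = -1$, which vanishes. Consequently the only potentially nonzero entries on the diagonals $p+q \leq 1$ sit in the $p=0$ column, namely $E_1^{0,i} = H^i(\Vv)$. I would then invoke the classical Milnor calculation: the Milnor fiber of an isolated weighted homogeneous singularity in $m$ complex variables is homotopy equivalent to a bouquet of $(m-1)$-spheres. Combined with the hypothesis $\dim \Vv \geq 2$ (so that $m - 1 \geq 2$), this gives $H^0(\Vv) = \bC$, $H^1(\Vv) = 0$, and $H^i(\Vv) = 0$ for $i < 0$.

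Finally I would track the surviving class at $E_1^{0,0}$ through the higher pages. Outgoing differentials $d_r \colon E_r^{0,0} \to E_r^{r, 1-r}$ land in the vanishing region $p > 0$, and incoming differentials $d_r \colon E_r^{-r, r-1} \to E_r^{0,0}$ originate on the total degree $-1$ line, which has already been shown to be zero. Hence $E_\infty^{0,0} \cong \bC$ while $E_\infty^{p,q} = 0$ for $p + q < 0$ or $p+q = 1$. Regularity and convergence of the spectral sequence follow from the remarks after \pref{th:mcleanss} (here $c_i = 0 > -1$ and, for each fixed $p$, only finitely many $q$ contribute), and the three claimed vanishings drop out.

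I do not foresee a serious obstacle. The whole argument is a degree count built on the observation that the shift by $-2\sum_i k_i \leq -2$ automatically annihilates every contribution from the $p < 0$ strip on the low-degree diagonals, combined with the standard Milnor fiber cohomology. The only delicate point is that the argument breaks down precisely at total degree $2$, where the shift $2 - 2\sum_i k_i$ may vanish and genuine contributions from $p<0$ can appear; this is consistent with $\SH^2(\Vv)$ being typically nonzero, so the stated range $i \leq 1$ is in fact the natural one for this method.
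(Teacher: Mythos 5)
Your argument is correct and is essentially the paper's own proof, just written out in more detail: the paper likewise observes that simple connectivity of the Milnor fiber (a bouquet of spheres of dimension $\ge 2$) kills $H^1(\Vv)$ in the $p=0$ column, while $c_i=0$ and $\kappa_i>0$ force every $p<0$ contribution to land in total degree $\ge 2$ because of the shift by $-2\sum_i k_i \le -2$. No gaps; your explicit tracking of the differentials into and out of $E_1^{0,0}$ only makes the two-line argument of the paper more precise.
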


\begin{proof}
Since $\Vv$ is simply connected,
we do not get any contribution from $H^1(\Vv)=0$.
The vanishing of $c_i$ and the positivity of $\kappa_i$ imply that
the orbits coming from the normal crossing divisor contribute to $\SH^i \lb \Vv \rb$ for $i \geq 2$. 
\end{proof}

Now we can prove a generalization
of the non-formality result in \cite{LP1},
which corresponds to the case $\w =x^2+y^3$. 

\begin{thm} \label{th:non-formality}
Under the same assumption as \pref{cr:shifts},
$\cA$ is not formal.
\end{thm}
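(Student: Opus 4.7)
The plan is to derive a contradiction by computing $\HH^1$ in two different ways under the hypothesis that $\cA$ is formal. Since $\cS$ split-generates $\cF(\Vv)$, which sits inside $\Mod \cS$ via the Yoneda embedding, Corollary \pref{cr:HH_isom} yields $\HH^*(\cA) \cong \HH^*(\cS) \cong \HH^*(\cF(\Vv))$. Combining with Corollary \pref{cr:seidelconj} gives $\HH^*(\cA) \cong \SH^*(\Vv)$, and Corollary \pref{cr:shifts} then forces $\HH^1(\cA) = 0$. So if $\cA$ were formal we would have $\HH^1(A') = 0$ with $A' \coloneqq H^*(\cA)$.

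I will derive a contradiction by exhibiting a non-zero class in $\HH^1(A')$, namely the class of the Euler derivation $D(a) \coloneqq |a| \cdot a$ on homogeneous elements $a \in A'$. The only structure of $A' = \bigoplus_{i,j=1}^\mu \hom^*_{\cF(\Vv)}(S_i, S_j)$ that is needed is the following: the degree-zero subalgebra $(A')^0 = \bigoplus_{i=1}^\mu \bC \, e_i$ is commutative, spanned by the identity idempotents $e_i$ of the distinct vanishing cycles; and since each $S_i$ is a Lagrangian $(n-1)$-sphere, the standard Floer-Morse identification gives $\hom^{n-1}(S_i,S_i) \cong H^{n-1}(S^{n-1}) \cong \bC$, so $e_i (A')^{n-1} e_i$ is one-dimensional.

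Suppose, toward a contradiction, that $D$ is inner. Since $D$ preserves cohomological grading, we may assume $D = [x,-]$ with $x \in (A')^0$, hence $x = \sum_i c_i e_i$ for some scalars $c_i$. For any non-zero $\alpha_i \in e_i (A')^{n-1} e_i$ one computes $[x,\alpha_i] = (c_i - c_i)\alpha_i = 0$, whereas $D(\alpha_i) = (n-1)\alpha_i \neq 0$ by the assumption $\dim \Vv = n-1 \geq 2$. Thus $D$ represents a non-zero class in $\HH^1(A')_0$, conflicting with the vanishing $\HH^1(A') = 0$ deduced above under the formality hypothesis. Therefore $\cA$ is not formal. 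The substance of the argument is packaged in Corollary \pref{cr:seidelconj} and the symplectic cohomology computation in Corollary \pref{cr:shifts}; once the $\SH^* \cong \HH^*$ identification is in hand, the derivation-theoretic input above is elementary, relying only on the existence of the Poincaré duality class in $\hom^{n-1}(S_i,S_i)$ on each spherical Lagrangian.
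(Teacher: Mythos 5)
Your proof is correct in outline and shares the paper's overall strategy: both arguments play the non-vanishing of $\HH^1$ of the cohomology algebra off against $\HH^1(\cA) \cong \SH^1(\Vv) = 0$, obtained from \pref{cr:seidelconj} and \pref{cr:shifts}. Where you genuinely differ is in how $\HH^1(H^*(\cA)) \neq 0$ is established. The paper quotes \pref{cr:HH0}, a $B$-side computation of $\HH^1(\eY_0)$ via matrix factorizations, transported to $\HH^1(A)$ through the isomorphism \eqref{eq:HH}, which in turn rests on the formality theorem \pref{th:formal} and the generation theorem \pref{th:generation}. You instead verify directly that the Euler derivation is an outer derivation of the graded algebra $H^*(\cA)$; the paper only mentions the Euler derivation in a remark after its proof. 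Your route is more elementary and self-contained on the $A$-side, needing only $HF^*(S_i,S_i) \cong H^*(S^{n-1})$ rather than the mirror identification of $H^*(\cA)$ with $\HH^*(\eY_0)$.

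One inaccuracy should be repaired: the degree-zero part of $A' = H^*(\cA)$ is \emph{not} spanned by the idempotents. Since $A'$ is the degree $n-1$ trivial extension of the directed algebra $A^0 = \End(E)$, which is concentrated in degree $0$, the subalgebra $(A')^0 = A^0$ contains all the arrows of the quiver (e.g.\ the arrow $u$ with $|u|=0$ in \pref{fg:cy1}, or all arrows of the grid quiver in \pref{fg:ladder444}), so you cannot write $x = \sum_i c_i e_i$. The argument nevertheless survives because only the $(i,i)$-block matters: for any $x \in (A')^0$ one has $e_i x e_i \in e_i A^0 e_i \cong \bC e_i$, hence the component of $[x,\alpha_i]$ lying in $e_i A' e_i$ vanishes, while $D(\alpha_i) = (n-1)\alpha_i$ lies entirely in $e_i A' e_i$ and is non-zero. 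With that projection inserted, the comparison goes through and your proof is complete.
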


\begin{proof}
By \pref{cr:HH0},
we have $\HH^1(A) \ne 0$.
On the other hand, we know by \pref{cr:seidelconj} that 
$\HH^1(\cA, \cA)$ is isomorphic to $\SH^1(\Vv)$,
which is zero by \pref{cr:shifts}.
Hence we conclude that $\cA$ is not formal.
\end{proof}

A non-zero element of $\HH^1(A)$
is given by the Euler derivation
defined by
\begin{align}
 \eu(x) = \deg(x) x.
\end{align}
Recall that for any $A_\infty$-algebra $\scrA$ with $H^*(\scrA) = A$,
there exists a length spectral sequence
converging to $\HH^*(\scrA)$
with $E_2$-page given by $E_2^{p,q} = \HH^{p+q}(A)_q$.
It is shown in \cite[Equation 3.14]{MR3364859}
that the class of the Euler vector field is killed by the differential on $E_2$
if $\scrA$ is non-formal.

In dimension 2,
\pref{th:non-formality} can also be proved as follows:
If $\cA$ is formal,
then $\HH^*(\cA) \cong \HH^*(Y_0)$ has a dilation
since the BV operator on $\HH^*(Y_0)$
induced by the holomorphic volume form
sends $\eu/2 \in \HH^1$ to $1 \in \HH^0$.
On the other hand,
$\SH^* \lb \Vv \rb$ cannot have a dilation
due to the existence of an exact Lagrangian torus in $\Vv$
proved in \cite{MR3432159}.
Note that this argument uses that the BV operator on $\SH^* \lb \Vv \rb$
agrees with BV operator on $\HH^*(\cA)$,
which holds since any two BV operators differ by an invertible element in $\HH^0$,
which is of rank 1 in our case.

We give computations of the spectral sequence
in a few examples.

\subsection{The affine quartic surface}
 \label{sc:quartic_SH}

Let $\Vv = \w^{-1}(-1)$ be the Milnor fiber
of the quartic polynomial
$
 \w(x,y,z) = x^4 + y^4 + z^4,
$
which can be compactified to a quartic K3 surface $\Ytilde$ in $\P^3$
by adding a smooth curve $\Dv$ of genus 3.
We can take $\kappa = 1$ and $c=0$,
so that the $E_1$-page of the resulting spectral sequence is given in \pref{tb:spectral15}.
\begin{table}[H]
\centering
\begin{tikzpicture}
\matrix (mymatrix) [matrix of nodes, nodes in empty cells, text height=1.5ex, text width=3.0ex, align=center]
{
\tikz\node[overlay] at (-2.6ex,-33.2ex){\footnotesize p};\tikz\node[overlay] at (24ex,2.8ex){\footnotesize q}; 
      &   $\C^6$     &  0   &  0   &   0 & \vdots\\
      &   $\C$     &  $\C$  &  0   &   0 & 9 \\
      &   0     & $\C^6$ &  0   &   0 & 8\\
     &   0     &  $\C^6$   &  0   &   0 & 7\\
      &   0     &  $\C$   &  $\C$   &   0 & 6\\
      &   0     &  0   &  $ \C^6$   &   0 & 5\\
      &   0     &  0   &  $ \C^6$   &   0 & 4\\
      &   0     &  0   &  $\C$   &   0 & 3\\
      &   0     &  0   &  0   &$\C^{27}$& 2\\
      &   0     &  0   &  0   &   0 & 1\\
      &   0     &  0   &  0   &   $\C$ & 0\\
     & \ldots  & $-2$ & $-1$ & $0$ & \\
};
\draw (mymatrix-11-1.south west) ++ (-0.2cm,0) -- (mymatrix-11-6.south east);
\draw (mymatrix-1-6.north west) -- (mymatrix-12-6.south west);

\end{tikzpicture}
\caption{$E_1$ page of the spectral sequence for $x^4+y^4+z^4$.}
\label{tb:spectral15}
\end{table}
We immediately conclude that
$
 \SH^0(\Vv) = \bC,
$
$
 \SH^1(\Vv) = 0,
$
$
 \SH^2(\Vv) = \bC^{28},
$
$
 \SH^3(\Vv) = \bC^6,
$
and
$
 \SH^i(\Vv) = \bC^6
$
or $\bC^7$ for $i > 3$.

More generally,
let $\Vv= \w^{-1}(-1)$ for the polynomial
\begin{align}
  \w (x_1,\ldots, x_n) = x_1^{n+1} + \cdots + x_n^{n+1}
\end{align}
which compactifies to a Calabi-Yau hypersurface of degree $n+1$ in $\bP^n$ by looking at the zero set of $\mathbf{W}(x_0,x_1,\ldots, x_n) = x_0^{n+1} +\ldots + x_n^{n+1}$ in $\bP^n$. The smooth divisor at infinity $\Dv$ is defined by $\w=0$ in $\bP^{n-1} = \{x_0=0\}$. By standard arguments (cf. \cite{dolgachevweighted}) we can compute the cohomology of $\Dv$ as follows:
\begin{align}
H^*(\Dv) = \begin{cases}
\C & *=2k, \text{\ for } 0 \leq 2k < (n-2) \\
\C^{\left\lfloor \frac{n^{n}}{n+1}\right\rfloor+(-1)^{n}+1}  & *= n-2,  \\
\C & *=2k \text{\ for } (n-2) < 2k \leq 2(n-2).  
\end{cases} 
\end{align}
In view of the Lefschetz hyperplane theorem, the only non-trivial part of the computation is the Betti number $b_{n-2}(\Dv)$. This can be computed via the formula $b_{n-2}(\Dv) = (-1)^n (\chi(\Dv) - 2\left\lfloor \frac{n-1}{2} \right\rfloor) $ and the Euler characteristic can in turn be computed via Chern classes to be $\frac{1}{n+1} ( (-1)^n n^n + n(n+1) -1 )$.

The circle bundle $N\Dv$ has Euler class $(n+1)$ times the hyperplane class.  This implies via the Leray--Serre spectral sequence that the cohomology of $N\Dv$ is given by
\begin{align}
H^*(N\Dv) = \begin{cases}
\C & *=0 \\
\C^{\left\lfloor \frac{n^{n}}{n+1}\right\rfloor+\frac{(-1)^{n}+1}{2}}  & *= n-2, n-1 \\
\C & *=2n.  
\end{cases} 
\end{align}
As for the Milnor fiber, the homotopy type is given as a wedge of $\mu$ spheres where Milnor number $\mu= n^n$ for $\w$. Thus, we have 
\begin{align} H^*(\Vv) = \begin{cases}
\C & *=0, \\
\C^{n^n} & *=n-1.
\end{cases}\end{align}

In constructing the spectral sequence we can,  as before, take $\kappa =1$ and $c=0$.
From the spectral sequence, we can immediately conclude that for $n>3$, we have $SH^0(\Vv) =\C, SH^1(\Vv)= 0, SH^2(\Vv) = \C$ and $SH^{n-1}(\Vv) = \C^{n^n}$ or $\C^{n^n-1}$.

\subsection{The double cover of the plane branched along a sextic}
 \label{sc:degree_2_SH}

Let $\Vv = \w^{-1}(-1)$ be the Milnor fiber of the polynomial
$
\w(x,y,z) = x^2 + y^6 + z^6,
$
which can be compactified to the double cover $\Ytilde$ of $\bP^2$
branched along a smooth sextic curve
by adding a smooth curve $\Dv$ of genus 2.
We can take $\kappa = 1$ and $c=0$,
so that the $E_1$-page of the resulting spectral sequence is given in \pref{tb:spectral16}.
\begin{table}[H]
\centering
\begin{tikzpicture}
\matrix (mymatrix) [matrix of nodes, nodes in empty cells, text height=1.5ex, text width=3.0ex, align=center]
{
\tikz\node[overlay] at (-2.6ex,-33.2ex){\footnotesize p};\tikz\node[overlay] at (24ex,2.8ex){\footnotesize q}; 
         &   $\C^4$     &  0   &  0   &   0 & \vdots\\
      &   $\C$     &  $\C$  &  0   &   0 & 9 \\
      &   0     & $\C^4$ &  0   &   0 & 8\\
     &   0     &  $\C^4$   &  0   &   0 & 7\\
      &   0     &  $\C$   &  $\C$   &   0 & 6\\
      &   0     &  0   &  $ \C^4$   &   0 & 5\\
      &   0     &  0   &  $ \C^4$   &   0 & 4\\
      &   0     &  0   &  $\C$   &   0 & 3\\
      &   0     &  0   &  0   &$\C^{25}$& 2\\
      &   0     &  0   &  0   &   0 & 1\\
      &   0     &  0   &  0   &   $\C$ & 0\\
     & \ldots  & $-2$ & $-1$ & $0$ & \\
};
\draw (mymatrix-11-1.south west) ++ (-0.2cm,0) -- (mymatrix-11-6.south east);
\draw (mymatrix-1-6.north west) -- (mymatrix-12-6.south west);

\end{tikzpicture}
\caption{$E_1$ page of the spectral sequence for $x^2+y^6+z^6$.}
\label{tb:spectral16}
\end{table}
We immediately conclude that
$
 \SH^0(\Vv) = \bC,
$
$
 \SH^1(\Vv) = 0,
$
$
 \SH^2(\Vv) = \bC^{26},
$
$
 \SH^3(\Vv) = \bC^4,
$
and
$
 \SH^i(\Vv) = \bC^4
$
or $\bC^5$ for $i > 3$.

More generally, let $\Vv= \w^{-1}(-1)$ for the polynomial
\begin{align}
  \w (x_1,\ldots, x_n) = x_1^{2} + x_2^{2n} + \cdots + x_n^{2n}
\end{align}
which compactifies to a Calabi-Yau hypersurface in $\bP(n,1,1,\ldots, 1)$ by looking at the zero set of $\mathbf{W}(x_0,x_1,\ldots, x_n) = x_0^{2n} + x_1^{2} + x_2^{2n} + \ldots + x_n^{2n}$ in $\bP(1,n,1,1,\ldots,1)$. The smooth divisor at infinity $\Dv$ is defined by $\w=0$ in $\bP(n,1,\ldots,1) = \{x_0=0\}$. By standard arguments (cf. \cite{dolgachevweighted}) we can compute the cohomology of $\Dv$ as follows:
\begin{align}
H^*(\Dv) = \begin{cases}
\C & *=2k, \text{\ for } 0 \leq 2k < (n-2) \\
\C^{\lfloor \frac{(2n-1)^{n-1}}{2n}\rfloor+(-1)^{n}+1}  & *= n-2,  \\
\C & *=2k \text{\ for } (n-2) < 2k \leq 2(n-2).
\end{cases} 
\end{align}
In view of the Lefschetz hyperplane theorem, the only non-trivial part of the computation is the Betti number $b_{n-2}(\Dv)$. This can be computed via the formula $b_{n-2}(\Dv) = (-1)^n (\chi(\Dv) - 2\lfloor \frac{n-1}{2} \rfloor) $ and the Euler characteristic can in turn be computed via Chern classes to be $\frac{1}{2n}((-1)^n(2n-1)^{n-1} + 2n(n-1)+1)$.

The circle bundle $N\Dv$ has Euler class $2n$ times the hyperplane class.  This implies via the Leray-Serre spectral sequence that the cohomology of $N\Dv$ is given by
\begin{align}
H^*(N\Dv) = \begin{cases}
\C & *=0 \\
\C^{\lfloor \frac{(2n-1)^{n-1}}{2n}\rfloor+\frac{(-1)^{n}+1}{2}}  & *= n-2, n-1 \\
\C & *=2n.  
\end{cases} 
\end{align}

As for the Milnor fiber, the homotopy type is given as a wedge of $\mu$ spheres where Milnor number $\mu= (2n-1)^{n-1}$ for $\w$. Thus, we have 
\begin{align} H^*(\Vv) = \begin{cases}
\C & *=0, \\
\C^{(2n-1)^{n-1}} & *=n-1.
\end{cases}\end{align}

In constructing the spectral sequence we can,  as before, take $\kappa =1$ and $c=0$.
From the spectral sequence, we can immediately conclude that for $n>3$, we have $SH^0(\Vv) =\C, SH^1(\Vv)= 0, SH^2(\Vv) = \C$ and $SH^{n-1}(\Vv) = \C^{(2n-1)^{n-1}}$ or $\C^{(2n-1)^{n-1}-1}$.

\section{Homological mirror symmetry for Milnor fibers}
 \label{sc:HMS}

We prove \pref{th:HMS} in this section.

\begin{proof}[Proof of \pref{th:HMS}]
Let
$
 \Vv \coloneqq \lc (x_1,\ldots,x_n) \in \bC^n \relmid x_1^{n+1}+\cdots+x_{n}^{n+1}=1 \rc
$
be the Milnor fiber of $\w=x_1^{n+1}+\cdots+x_{n}^{n+1}$.
A distinguished basis
$\lb S_i \rb_{i=1}^{n^n}$
of vanishing cycles
generates the compact Fukaya category of $\Vv$,
and the cohomology $A$ of the total morphism $A_\infty$-algebra
$
 \cA \coloneqq \bigoplus_{i,j=1}^{n^n} \hom \lb S_i, S_j \rb
$
is the degree $n-1$ trivial extension algebra of the tensor product 
$\fA_n^{\otimes n}$
of the Dynkin quiver $\fA_n$
of type $A_n$.
%
%
%
%
%
%
%
%
%
%
The $A_\infty$-algebra $\cA$
is not formal by \pref{th:non-formality},
and $\HH^*(\cF(\Vv))$ is isomorphic to $\SH^*(\Vv)$
computed in \pref{sc:quartic_SH}.

The graded algebra $A$ also appears
as the cohomology of the Yoneda dg algebra $\eA_u$
of a generator $\eS_u$ of $\perf \eY_u$
where
$
 \eY_u
$
for
$
 u
  \in U \coloneqq \Spec \bC[u_1,u_{n+1}]
$
is the quotient stack
$
 \ld
  \lb
   \Spec
    S_u
    \setminus \bszero
  \rb \middle/ \Gamma \rd
$
for
$
 S_u \coloneqq \bC[x_0,\ldots,x_n]/(x_1^{n+1}+\cdots+x_{n}^{n+1}+ u_1 x_0 \cdots x_n + u_{n+1} x_0^{n+1})
$
and
$
 \Gamma \coloneqq \lc (t_1,\ldots,t_n) \in \Gm^n \relmid t_1^{n+1}=\cdots=t_n^{n+1} \rc.
$
The moduli space $\cU_\infty(A)$ of minimal $A_\infty$-structures on $A$
is identified with $U$.

In order to identify $u \in U$
satisfying $\cA \simeq \eA_u$,
we compare $\HH^*(\eA_u)$ and $\HH^*(\cA) \cong \SH^*(\Vv)$
as graded vector spaces.
Since $\SH^*(\Vv)$ is infinite-dimensional over $\bfk$,
the mirror $\eY_u$ must be singular.
Up to the action of $\Gm$ on $U$,
there are precisely two non-zero $u \in U$ such that $\eY_u$ is singular,
i.e., $(u_1,u_{n+1}) = (1,0)$ and $(-n-1,1)$.
The Hochschild cohomologies of these singular stacks are computed in Sections \ref{sc:cusp}
and \ref{sc:odp}.
Comparing this with $\SH^*(\Vv)$ computed in \pref{sc:quartic_SH},
we conclude that the mirror of $\Vv$ is the stack associated with $(u_1,u_{n+1}) = (1,0)$.

The equivalence \pref{eq:HMS_Fermat2}
follows from \pref{eq:HMS_Fermat1},
\pref{eq:Fun(F,k)=W},
and
\begin{align}
  \Funex \lb \perf [Z/K], \perf \bfk \rb
  \simeq
  \coh [Z / K]
\end{align}
in \cite[Remark 1.1.6.(ii)]{MR3730514}.
The assumption \pref{eq:dualitySL} for Brieskorn--Pham singularities
is proved in \cite[Section 2.1]{2104.10050}.

The proof for
$
 \Vv \coloneqq \lc (x_1,\ldots,x_n) \in \bC^n \relmid x_1^2+x_2^{2n}+\cdots+x_n^{2n}=1 \rc
$
goes along the same lines.
The cohomology $A$ of the total morphism $A_\infty$-algebra
of a distinguished basis of vanishing cycles
is given by the degree $n-1$ trivial extension algebra
of $\fA_{2n-1}^{\otimes (n-1)}$.
The moduli space $\cU_\infty(A)$ of minimal $A_\infty$-structures is identified with
$
U \coloneqq \Spec \bC[u_2,u_{2n}]
$
parametrizing
$
\eY_u \coloneqq
 \ld
  \lb
   \Spec
    S_u
    \setminus \bszero
  \rb \middle/ \Gamma \rd
$
for
$
 S_u \coloneqq \bC[x_0,\ldots,x_n]/
 (x_1^2+x_2^{2n}+\cdots+x_n^{2n} + u_{2n} x_0^{2n} + u_{2} x_0^2 x_2^2 \cdots x_n^2)
$
and
$
 \Gamma \coloneqq \lc (t_1,\ldots,t_n) \in \Gm^n \relmid t_1^2 = t_2^{2n}=\cdots=t_n^{2n} \rc.
$
There are precisely two non-zero $u \in U$ up to the action of $\Gm$
such that $\eY_u$ is singular,
i.e., $(u_2,u_{2n}) = (1,0)$ and $(1,-n)$.
The Hochschild cohomologies of these singular stacks are computed in Sections \ref{sc:cusp}
and \ref{sc:odp}.
Comparing this with $\SH^*(\Vv)$ computed in \pref{sc:degree_2_SH},
we conclude that the mirror of $\Vv$ is the stack associated with $(u_2,u_{2n}) = (1,0)$.
\end{proof}


\begin{thebibliography}{9999}

\bibitem{1411.7770} Abdelgadir, T., Okawa S., Ueda K. Compact moduli of noncommutative projective planes. Preprint, arXiv:1411.7770.

\bibitem{MR2737980} Abouzaid, M. A geometric criterion for generating the Fukaya category. Publ. Math. Inst. Hautes \'Etudes Sci. No. 112 (2010), 191--240. 



\bibitem{antieauvezzosi} Antieau B, Vezzosi G. A remark on the Hochschild-Kostant-Rosenberg theorem in characteristic $p$. Preprint, arXiv:1710.06039.

\bibitem{MR0467795} Arnol'd, V. I. Local normal forms of functions. Invent. Math. 35 (1976), 87--109. 


\bibitem{MR3838112} Auroux, D.
Speculations on homological mirror symmetry for hypersurfaces
in {$(\bC^\ast)^n$}.
Surv.~Differ.~Geom. vol.~22 (2018), 1--47
Int. Press, Somerville, MA.

\bibitem{BFK} Ballard, M., Favero, D., Katzarkov, L. A category of kernels for equivariant factorizations and its implications for Hodge theory. Publ. Math. Inst. Hautes \'{E}tudes Sci. 120 (2014), 1--111.

\bibitem{MR509388} Beilinson, A. A. Coherent sheaves on $\mathbb{P}^n$ and problems in linear algebra. Funktsional. Anal. i Prilozhen. 12 (1978), no. 3, 68--69.

\bibitem{MR2669705} Ben-Zvi, D., Francis, J., Nadler, D.
Integral transforms and Drinfeld centers in derived algebraic geometry.
J. Amer. Math. Soc. 23 (2010), no. 4, 909--966.

\bibitem{MR3730514}
Ben-Zvi, D., Nadler, D., Preygel, A.
Integral transforms for coherent sheaves.
J. Eur. Math. Soc. (JEMS) 19 (2017), no. 12, 3763–-3812.


\bibitem{MR1214325} Berglund, P., H\"ubsch, T. A generalized construction of mirror manifolds. Nuclear Phys. B 393 (1993), no. 1-2, 377--391. 

\bibitem{MR1230966}Bondal, A. I. Polishchuk, A. E. Homological properties of associative algebras: the method of helices. Izv. Ross. Akad. Nauk Ser. Mat. 57 (1993), no. 2, 3--50; translation in Russian Acad. Sci. Izv. Math. 42 (1994), no. 2, 219--260.

\bibitem{MR2509327} Borisov, L., Hua, Z. On the conjecture of King for smooth toric Deligne-Mumford stacks. Adv. Math. 221 (2009), no. 1, 277--301.

\bibitem{MR0642697} Brieskorn, E. The unfolding of exceptional singularities, Nova Acta Leopoldina Nr. 240 (1981), 65--93.

\bibitem{BuFl} Buchweitz, R. O., Flenner, H. The global decomposition theorem for Hochschild (co-)homology of singular spaces via the Atiyah-Chern character, Adv. Math. 217 (2008), 243--281.  


\bibitem{MR3084707} C\u ald\u araru, A. , Tu, J. Curved $A_\infty$ algebras and Landau-Ginzburg models. New York J. Math. 19 (2013), 305--342. 

\bibitem{1712.09126} Chantraine B., Dimitroglou Rizell G., Ghiggini P., Golovko R. Geometric generation of the wrapped Fukaya category of Weinstein manifolds and sectors. Preprint arXiv:1712.09126.

\bibitem{diolis} Diogo L., Lisi S. Symplectic homology of complements of smooth divisors. Preprint, arXiv:1804.0801.

\bibitem{MR1420220} Dolgachev I. V. Mirror symmetry for lattice polarized K3 surfaces. Algebraic geometry, 4. J. Math. Sci. 81 (1996), no. 3, 2599--2630.

\bibitem{dolgachevweighted} Dolgachev, I. V. Weighted projective varieties. Group actions and vector fields (Vancouver, B.C., 1981), 34--71, Lecture Notes in Math., 956, Springer, Berlin, 1982. 

\bibitem{MR2824483} Dyckerhoff, T. Compact generators in categories of matrix factorizations, Duke Math. J. 159 (2011), no. 2, 223--274.

\bibitem{MR2834726} Ebeling, W., Takahashi, A. Strange duality of weighted homogeneous polynomials. Compos. Math. 147 (2011), no. 5, 1413--1433. 



\bibitem{EkLe} Ekholm T., Lekili Y. Duality between Lagrangian and Legendrian invariants. Preprint, arXiv:1701.01284.

\bibitem{EtLe} Etg\"u T., Lekili Y. Koszul duality patterns in Floer theory. Geom. Topol. 21 (2017), no. 6, 3313--3389. 

\bibitem{EtLe2} Etg\"u T., Lekili Y. Fukaya categories of plumbings and multiplicative preprojective algebras. To appear in Quantum Topology. Preprint, arXiv:1703.04515.

\bibitem{2001.06500} Favero, D., Kaplan, D., Kelly, T. Exceptional collections for mirrors of invertible polynomials. Preprint, arXiv:2001.06500.

\bibitem{2004.04982} Favero, D., Kaplan, D., Kelly, T. A maximally-graded invertible cubic threefold that does not admit a full exceptional collection of line bundles. Preprint, arXiv:2004.04982.



\bibitem{FU1}  Futaki, M.,  Ueda, K. Homological mirror symmetry for Brieskorn-Pham singularities. Selecta Math. (N.S.) 17 (2011), no. 2, 435--452.

\bibitem{FU2} Futaki, M., Ueda, K. Homological mirror symmetry for singularities of type D. Math. Z. 273 (2013), no. 3-4, 633--652. 



\bibitem{MR3701353} Gaitsgory, D., Rozenblyum, N. A study in derived algebraic geometry. Vol. II. Deformations, Lie theory and formal geometry.
Mathematical Surveys and Monographs, 221. American Mathematical Society, Providence, RI, 2017. xxxv+436 pp.


\bibitem{1707.02959}
Gammage, B., Shende V.
Mirror symmetry for very affine hypersurfaces. Preprint, arXiv:1707.02959.

\bibitem{MR3121862} Ganatra, S. Symplectic Cohomology and Duality for the Wrapped Fukaya Category. Thesis (Ph.D.)--Massachusetts Institute of Technology. 2012. 

\bibitem{GPS} Ganatra, S., Pardon J., Shende, V., Covariantly functorial wrapped Floer theory on Liouville sectors. Preprint, arXiv:1706.03152.

\bibitem{1809.03427} Ganatra, S., Pardon J., Shende, V., Sectorial descent for wrapped Fukaya categories. Preprint, arXiv:1809.03427. 

\bibitem{ganpom}
Ganatra, S., Pomerleano D.
A Log PSS morphism with applications to Lagrangian embeddings.
J. Topol. 14 (2021), no. 1, 291--368.

\bibitem{1811.03609}
Ganatra, S., Pomerleano D.
Symplectic cohomology rings of affine varieties in the topological limit.
Preprint, arXiv:1811.03609.


\bibitem{1903.01351} Habermann M., Smith J., Homological Berglund-H\"ubsch mirror symmetry for curve singularities. J. Symplectic Geom. Volume 18 (2020), no. 6, 1515--1574


\bibitem{Habermann} Habermann M. Homological Mirror Symmetry for invertible polynomials in two variables. Preprint, arXiv:2003.01106.



\bibitem{1809.09940} Hirano Y., Ouchi G. Derived factorization categories of non-Thom--Sebastiani-type sum of potentials. Preprint, arXiv:1809.09940.

\bibitem{MR3050701}
Ishii, A., Ueda, K.
A note on derived categories of Fermat varieties.
Derived categories in algebraic geometry, 103--110, EMS Ser. Congr. Rep., Eur. Math. Soc., Z\"{u}rich, 2012.

\bibitem{isik} Isik, M. U., Equivalence of the derived category of a variety with a singularity category. Int. Math. Res. Not. (2013) 2787--2808.



\bibitem{keating} Keating, A.  Homological mirror symmetry for hypersurface cusp singularities. Sel. Math. New Ser. (2018) 24: 1411--1452.

\bibitem{MR3432159}
Keating, A.
Lagrangian tori in four-dimensional Milnor fibres.
Geom. Funct. Anal. 25 (2015), no. 6, 1822--1901.

\bibitem{MR2681708} Keller, B.
Cluster algebras, quiver representations and triangulated categories.
London Math. Soc. Lecture Note Ser.
vol.~375 (2010),
76--160.



\bibitem{MR3044454} Kobayashi, M., Mase, M., Ueda, K. A note on exceptional unimodal singularities and K3 surfaces. Int. Math. Res. Not. IMRN 2013, no. 7, 1665--1690. 


\bibitem{1911.09859} Kravets, O., Categories of singularities of invertible polynomial. Preprint, arXiv:1911.09859.

\bibitem{Keller} Keller, B., Derived invariance of higher structures on the Hochschild complex. Available at \url{https://webusers.imj-prg.fr/~bernhard.keller/publ/dih.pdf}.


\bibitem{LP1} Lekili, Y., Perutz, T. Fukaya categories of the torus and Dehn surgery. Proc. Natl. Acad. Sci. USA 108 (2011), no. 20, 8106--8113.

\bibitem{LP2} Lekili Y., Perutz, T. Arithmetic mirror symmetry for the 2-torus. Preprint, arXiv:1211.4632.

\bibitem{LPol} Lekili Y., Polishchuk A. A modular compactification of $\cM_{1,n}$ from $A_\infty$-structures Journal f\"ur die reine und angewandte Mathematik (Crelles Journal), ISSN (Online) 1435-5345, ISSN (Print) 0075-4102.

\bibitem{LPol2} Lekili, Y., Polishchuk, A. Arithmetic mirror symmetry for genus 1 curves with n marked points. Selecta Math. (N.S.) 23 (2017), no. 3, 1851--1907.

\bibitem{LPol3} Lekili, Y., Polishchuk, A. Homological mirror symmetry for higher dimensional pair of pants. To appear in Compositio Mathematica.

\bibitem{2004.07374} Lekili, Y., Ueda, K. Homological mirror symmetry for Milnor fibers of simple singularities. Algebraic Geometry 8 (5) (2021) 562--586.


\bibitem{Li} Li, Y. Koszul duality via suspending Lefschetz fibrations.  J. Topol. 12 (2019), no. 4, 1174--1245.  

\bibitem{1907.09257} Li, Y. Exact Calabi-Yau categories and odd-dimensional Lagrangian spheres. Preprint, arXiv:1907.09257.

\bibitem{2104.10050} Li, Y. Nonexistence of exact {L}agrangian tori in affine conic bundles over {$\mathbb{C}^n$}. Preprint, arXiv:2104.10050.

\bibitem{MR761312} Looijenga, E. The smoothing components of a triangle singularity. II. Math. Ann. 269 (1984), no. 3, 357--387. 

\bibitem{MR2003125} Looijenga, E. Compactifications defined by arrangements. II. Locally symmetric varieties of type IV. Duke Math. J. 119 (2003), no. 3, 527--588.


\bibitem{MR3294421} Mase, M., Ueda, K. A note on bimodal singularities and mirror symmetry. Manuscripta Math. 146 (2015), no. 1-2, 153--177.

\bibitem{mcleantalk} McLean M. Talk at SFT VIII workshop at Humboldt University, August 2016, slides available at \url{http://www.math.stonybrook.edu/~markmclean/talks/spectralsequencealltogether.pdf}.


\bibitem{1903.06457} Mori, I., Okawa, S., Ueda, K. Moduli of noncommutative Hirzebruch surfaces. Preprint, arXiv:1903.06457.

\bibitem{1604.00114} Nadler, D. Wrapped microlocal sheaves on pairs of pants. Preprint, arXiv:1604.00114.


\bibitem{MR2975297} Nohara, Y., Ueda, K. Homological mirror symmetry for the quintic 3-fold. Geom. Topol. 16 (2012), no. 4, 1967--2001.

\bibitem{MR2641200} Orlov, D. Derived categories of coherent sheaves and triangulated categories of singularities, Algebra, arithmetic, and geometry: in honor of Yu. I. Manin. Vol. II, Progr. Math., vol. 270, Birkh\"ause Boston Inc., Boston, Ma, 2009, pp. 503--531.

\bibitem{MR2437083} Orlov, D. O. Triangulated categories of singularities, and equivalences between Landau-Ginzburg models. Mat. Sb. 197 (2006), no. 12, 117--132; translation in Sb. Math. 197 (2006), no. 11-12, 1827--1840.


\bibitem{Pol} Polishchuk, A., Moduli of curves as moduli of $A_\infty$ structures, Duke Math. J. Volume 166, Number 15 (2017), 2871--2924.


\bibitem{1101.5834} Preygel, A., Thom-Sebastiani \& Duality for Matrix Factorizations. Preprint, arXiv:1101.5834.



\bibitem{MR3108698} Segal, E. The closed state space of affine Landau-Ginzburg B-models. J. Noncommut. Geom. 7 (2013), no. 3, 857--883.

\bibitem{MR3364859}
Seidel, P. \emph{Homological mirror symmetry for the quartic surface}, Mem.
  Amer. Math. Soc. \textbf{236} (2015), no.~1116, vi+129.

\bibitem{seidelbiased} Seidel P. A biased view of symplectic cohomology. Current developments in mathematics, International Press, Somerville, MA, 2008, 211--253.


\bibitem{seidelgraded} Seidel P. Graded Lagrangian submanifolds, Bull. Soc. Math. France 128 (2000), no. 1, 103--149.

\bibitem{seidelICM} Seidel P. Fukaya categories and deformations. Proceedings of the International Congress of Mathematicians, Vol. II (Beijing, 2002), 351--360, Higher Ed. Press, Beijing, 2002.


\bibitem{MR3294958} Sheridan, N. Homological mirror symmetry for Calabi--Yau hypersurfaces in projective space. Invent. Math. 199 (2015), no. 1, 1--186. 



\bibitem{sheridansmith} Sheridan, N., Smith I. Homological mirror symmetry for generalized Greene--Plesser mirrors. Preprint, arXiv:1709.08937.

\bibitem{shipman} Shipman I. A geometric approach to Orlov's theorem. Compos. Math. 148 (2012) 1365--1389.


\bibitem{2010.10114} Smith I., Wemyss M., Double bubble plumbings and two-curve flops. Preprint, arXiv:2010.10114.

\bibitem{Smyth} Smyth, D. I. Modular compactifications of the space of pointed elliptic curves I. Compos. Math. 147 (2011), no. 3, 877--913.

\bibitem{0711.3907v1} Takahashi, A. Weighted projective lines associated to regular systems of weights of dual type. Preprint, arXiv:0711.3907v1.

\bibitem{MR2683215} Takahashi, A. Weighted projective lines associated to regular systems of weights of dual type. New developments in algebraic geometry, integrable systems and mirror symmetry (RIMS, Kyoto, 2008), 371--388, Adv. Stud. Pure Math., 59, Math. Soc. Japan, Tokyo, 2010.


\bibitem{MR2276263} To\"en, B. The homotopy theory of dg-categories and derived Morita theory. Invent. Math. 167 (2007), no. 3, 615--667. 

\bibitem{0604361} Ueda, K. Homological mirror symmetry and simple elliptic singularities. Preprint, arXiv:math/0604361.

\bibitem{MR3223358} Ueda, K. Hyperplane sections and stable derived categories. Proc. Amer. Math. Soc. 142 (2014), no. 9, 3019--3028.  

\bibitem{uedasurvey} Ueda, K. Mirror symmetry and K3 surfaces. Handbook for mirror symmetry of Calabi--Yau and Fano manifolds, 483--512, Adv. Lect. Math. (ALM) 47, Int. Press, Boston, 2019.

\bibitem{MR2836401} Van den Bergh, M. Noncommutative quadrics. Int. Math. Res. Not. IMRN 2011, no. 17, 3983--4026.

\end{thebibliography}
\end{document}